\documentclass[12pt]{article}

\usepackage{amssymb,amsmath,amsfonts,amssymb}
\textheight 21cm \topmargin -1cm \leftmargin 0cm \marginparwidth
0mm \textwidth 16cm \hsize \textwidth \advance \hsize by
-\marginparwidth \oddsidemargin -4mm \evensidemargin
\oddsidemargin
\usepackage{amssymb}
\advance\hoffset by 5mm


\def\@abssec#1{\vspace{.05in}\footnotesize \parindent .2in
{\bf #1. }\ignorespaces}

\newtheorem{theorem}{Theorem}[section]

\newtheorem{lemma}[theorem]{Lemma}
\newtheorem{proposition}[theorem]{Proposition}
\newtheorem{corollary}[theorem]{Corollary}
\newtheorem{definition}[theorem]{Definition}

\def\th{\theta}
\def\om{\omega}

\def \Rm {\mathbb R}

\def \Tm {\mathbb T}

\title{ Regularity and
Blow up for Active Scalars}
\author{Alexander Kiselev\thanks{Department of
Mathematics, University of Wisconsin, Madison, WI 53706, USA;
e-mail: kiselev@math.wisc.edu } }


\begin{document}

\maketitle

\begin{abstract}
We review some recent results for a class of fluid mechanics equations called active
scalars, with fractional dissipation.
Our main examples are the surface quasi-geostrophic equation, the
Burgers equation, and the Cordoba-Cordoba-Fontelos model. We discuss
nonlocal maximum principle methods which allow to prove existence of
global regular solutions for the critical dissipation. We also
recall what is known about the possibility of finite time blow up in
the supercritical regime.
\end{abstract}

\section{Introduction}

In this review, we will be concerned with a class of equations that
are called active scalars (see, e.g. \cite{Const1}):
\begin{equation}\label{as1}
\theta_t = (u \cdot \nabla) \theta -(-\Delta)^\alpha \theta,
\,\,\,\,\theta(x,0)=\theta_0(x).
\end{equation}
Here $\theta$ is a scalar function, and the fluid velocity $u$ is
determined from $\theta$ in a certain way (hence the name "active
scalar"). The parameter $\alpha$ regulates the strength of
dissipation, and in this paper we will consider the range $0 \leq
\alpha < 1.$ Typically, when talking about $\alpha =0$ case, we will
mean that there is no dissipative term in \eqref{as1} (formally,
there should be $-\theta$). The natural generic setting for
\eqref{as1} is either $\Rm^d$ with some decay conditions or
$\Tm^d$ (equivalently, periodic initial data in $\Rm^d$).

The main purpose of this review is to describe recent progress in
understanding active scalars, focusing on nonlocal maximum
principles. This is a technique which appeared for the first time in
\cite{KNV}, and is based on proving that a certain family of moduli
of continuity is preserved by evolution: if the initial data
satisfies certain (nonlocal) condition, then the solution also
satisfies it for all times. So far, this technique has been
particularly effective in handling the critical dissipation strength
for active scalars. It allows to prove existence of global smooth
solutions in this case, where the balance between nonlinear and dissipative
effects is very delicate. One can think of these methods as the sharpest
one can do working with absolute value estimates; any progress from here
to the supercritical regime would likely require insight into finer properties
of dynamics and cancelation. We will discuss the general form of the
nonlocal maximum principles as they apply to active scalars, and
illustrate these general results with some particular examples. Let
us start by setting the stage and writing down several active scalar
equations that will be of interest to us.

Perhaps the best known example of the active scalar is the
two-dimensional Euler equation in the vorticity form. In this case,
$\alpha =0,$ $\theta$ is the vorticity, and $u = \nabla^\perp
(-\Delta)^{-1}\theta$ (here $\nabla^\perp = (\partial_2,
-\partial_1)$). It is well known that the two-dimensional Euler
equation has unique global smooth solutions if the initial data are
sufficiently smooth (see e.g. \cite{MP}).

Another interesting example of an active scalar is the $2D$ surface
quasi-geostrophic (SQG) equation. In this case, $u = \nabla^\perp
(-\Delta)^{-1/2}\theta.$ The SQG equation appears in atmospheric
studies. The starting point is the three dimensional Euler equation
and temperature equation in Boussinesq approximation set in a
strongly rotating half-space. Under certain assumptions, a simpler
system can be used as a model \cite{Held}:
\begin{equation}\label{bousapp}
\Delta \psi =0, \,\,\, \theta = \psi_z,\,\,\,\theta_t = (u \cdot
\nabla) \theta, \,\,\,u = \nabla^\perp \psi.
\end{equation}
Here $z$ is the vertical axis of rotation, $\psi$ is the
streamfunction, $\theta$ is the potential temperature, and all
equations hold in $\Rm^3_+.$ Now on the boundary of the half-space
harmonic function $\psi$ satisfies $\psi_z = (-\Delta_{x,y})^{1/2}
\psi.$ Therefore, the vector field $u$ on the boundary is given by
$u = \nabla^\perp (-\Delta)^{-1/2}\theta.$ This closes the equation
for $\theta$ on the boundary, and the function $\psi$ in the
half-space can be recovered from the boundary values. Fractional dissipation
of power $1/2$ also appears naturally in \eqref{bousapp}: the term
$-(-\Delta)^{1/2}\theta$ in right hand side of the equation for
$\theta$ models Ekman pumping effect in the boundary layer.

In mathematical literature, the SQG equation was first considered by Constantin, Majda and Tabak in \cite{CMT}
(in the conservative case $\alpha =0$). A scenario for finite time blow up, a closing saddle,
was proposed and numerically investigated there. It was later proved by Cordoba \cite{Cord} that
blow up does not happen in this scenario.
Lately, the $2D$ SQG equation attracted much attention from various
authors (see e.g.
\cite{CV,CarFer,ChLee,C,CCW,CMZ,CW,CW11,CW12,dong,DoDu,DoPa,FPV,Ju2,Ju6,Ju7,KNV,Miura,Wu,Wu1,Wu2,Wu3}
where more references can be found). Mainly it is due to the fact
that it is probably the simplest evolutionary fluid dynamics
equation for which the problem of existence of smooth global
solutions remains unsolved (when dissipation is not strong enough - namely, $\alpha<1/2$.

More generally, one can consider a whole spectrum of active scalars
which interpolate between the SQG and Euler equations (see e.g. \cite{CIWu2, LSJS}). The
vector field $u$ in this case is given by
\begin{equation}\label{gensqg} u = \nabla^\perp
(-\Delta)^{-\beta} \theta,\,\,\, 1/2 \leq \beta \leq 1.\end{equation} 
We will call this model $\beta$-generalized SQG.
If $\alpha=0,$ the global existence of smooth solutions for sufficiently nice
initial data is known only for $\beta =1$ (this
corresponds to the Euler equation); in general the dissipation
strength needed for global results increases with decrease in
$\beta$ (more precisely, as we will see below, global results are
available for $\alpha \geq 1-\beta$).

The supercritical $\alpha
<1/2$ regime for the SQG equation and more generally $\alpha <
1-\beta$ regime for \eqref{gensqg} remains little understood. One
has local existence of smooth solutions, but it is not known if
finite time blow up starting from smooth initial data is possible.
The only available global regularity results assume smallness of the initial data in a
certain sense or establish global regularity after
sufficiently long time for dissipation very close to critical \cite{Sylv}.

Another example of an active scalar is the model introduced by
Cordoba, Cordoba and Fontelos \cite{CCF}. The equation is set in one
dimension, and $u$ is a scalar function given by the Hilbert
transform of $\theta:$
\begin{equation}\label{ccfas}
u = H \theta, \,\,\, H \theta(x) = P.V. \frac{1}{\pi}\int
\frac{\theta(y)}{x-y}\,dy.
\end{equation}
The equation \eqref{ccfas} is motivated by Birkhoff-Rott equations modeling the
evolution of vortex sheets with surface tension \cite{BLM,M11}.  This equation is easier to deal with due to
one-dimensionality; but its nonlocal nature still makes it highly
nontrivial. In \cite{CCF}, it was proved that finite time blow up is
possible if there is no dissipation; later in \cite{LR1}, Li and
Rodrigo proved that finite time blow up is possible for $0 < \alpha
< 1/4.$ Existence of global regular solutions is known for $\alpha
\geq 1/2.$ It is not known if finite time blow up is possible for $1/4 \leq \alpha < 1/2.$

Finally, perhaps the simplest active scalar is the classical Burgers
equation in one dimension, where $u = \theta.$ The Burgers equation
is one of the most studied and well-understood models with nonlinearity of fluid
mechanics type. It is well known that shocks can form in finite time when $\alpha
=0,$ and that unique global regular solutions exist if $\alpha =1.$
Burgers equation with fractional dissipation attracted attention
only relatively recently. Nevertheless, this is the only active
scalar for which the issue of global existence of smooth solutions
depending on the strength of dissipation is fully understood: if
$\alpha \geq 1/2$ one has global regularity, while for $\alpha <1/2$
finite time blow up is possible \cite{KNS}.

The plan of this review is as follows. In Section~\ref{gennmp}, we
set up nonlocal maximum principles for active scalars in full
generality. In Section~\ref{reg}, we provide examples of
applications to proving global regularity for various active scalars
at critical dissipation level. In Section~\ref{timeapp}, we
discuss application of nonlocal maximum principles towards existence
of solutions with rough initial data, using the  Burgers equation as an
example. In Section~\ref{bubu}, we sketch out
the argument for finite time blow up in supercritical Burgers
equation. In Section~\ref{buccf}, we sketch the argument for blow up
in Cordoba-Cordoba-Fontelos model, which is different from the original
\cite{CCF} proof.  We conclude with a brief discussion of open problems.
Some of the results in Section~\ref{gennmp} and
Section~\ref{reg} appear here for the first time.
Given the nature of this review, however, we often present only
outline of the proof, avoiding complete technical details and
postponing the full treatment to a later publication \cite{K4}.


\vspace*{0.5cm}
\setcounter{equation}{0}
\section{Nonlocal maximum principles for active
scalars}\label{gennmp}

We start by recalling the definition of the modulus of continuity.

\begin{definition}
A function $\omega: \Rm^+ \mapsto \Rm^+$ is called a modulus of
continuity if $\omega(0)=0,$ $\omega$ is continuous, increasing and
concave. We will also require that $\omega$ is piecewise $C^1$ on
$(0,\infty):$ that is, its derivative is continuous apart from perhaps a finite
number of points where one-sided derivatives exist but may not be equal.

We say that a function $f$ obeys modulus of continuity $\omega$ if
$|f(x)-f(y)| < \omega(|x-y|)$ for all $x \ne y.$

We say that the evolution given by \eqref{as1} preserves $\omega$ if
$\theta(x,t)$ obeys $\omega$ for all times $t>0$ provided that the
initial data $\theta_0(x)$ has $\omega.$
\end{definition}

A classical example of a modulus of continuity is
$\omega(\xi)=\xi^\beta,$ $0<\beta<1,$ corresponding to H\"older
classes.

For simplicity, in this paper we will mostly consider the case
where the active scalar equation \eqref{as1} is set on the torus
$\Tm^d$ (or, equivalently, in $\Rm^d$ with periodic initial data).
One exception will be the blow up argument for the CCF equation in Section~\ref{buccf},
where the whole line case is technically simpler.

Now let us state the general form of a nonlocal maximum principle for active scalars.
Suppose that, if $\theta$ has a given modulus of continuity
$\omega,$ then $u$ can be shown to have modulus of continuity
$\Omega_\omega.$ The exact form of $\Omega_\omega$ depends on the
active scalar under consideration. For example, for the Burgers
equation $\Omega_\omega = \omega;$ we will see other examples in the
next section. Also, define
\begin{eqnarray}\label{dtermest}
D_{\alpha,\omega}(\xi) = \\ c_\alpha \left( \int\limits_0^{\xi/2}
\frac{\omega(\xi+2\eta)+\omega(\xi-2\eta)-2\omega(\xi)}{\eta^{1+2\alpha}}\,d\eta
+\int\limits_{\xi/2}^\infty
\frac{\omega(\xi+2\eta)-\omega(2\eta-\xi)-2\omega(\xi)}{\eta^{1+2\alpha}}\,d\eta
\right), \nonumber
\end{eqnarray}
where $c_\alpha$ are certain fixed positive constants to be
described later. The $D_{\alpha,\omega}(\xi)$ expression will appear
in the estimation of the dissipative term contribution, as will
become clear below.

The following theorem \cite{K4} is the main result of this section.

\begin{theorem}\label{mainthm}
Let $\theta(x,t)$ be a periodic smooth solution of \eqref{as1}. Suppose that
$\omega(\xi,t)$ is continuous in $(\xi,t)$ and piecewise $C^1$ in $t$ for each fixed $\xi,$ that for any
fixed $t \geq 0,$ $\omega(\xi,t)$ is a modulus of continuity, and
that $\partial^2_{\xi\xi}\omega(0,t)=-\infty$ for all $t \geq 0.$ Assume that
the initial data $\theta_0(x)$ obeys $\omega(\xi,0) \equiv
\omega_0(\xi).$ Then $\theta(x,t)$ obeys modulus of continuity $\omega(\xi,t)$ for all $t
>0$ provided that $\omega(\xi,t)$ satisfies
\begin{equation}\label{keyineq}
\partial_t \omega(\xi,t) > \Omega_\omega(\xi,t) \partial_\xi
\omega(\xi,t) + D_{\alpha,\omega}(\xi,t)
\end{equation}
for all $\xi, t >0$ such that $\omega(\xi,t) \leq 2\|\theta(x,t)\|_{L^\infty}.$
In \eqref{keyineq}, at the points where $\partial_\xi
\omega(\xi,t)$ ($\partial_t \omega(\xi,t)$) does not exist, the larger (smaller)
value of the one-sided derivative should be taken.
\end{theorem}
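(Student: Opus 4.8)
The plan is to argue by contradiction through a first breakthrough time, in the spirit of the pointwise modulus-of-continuity method. First I would introduce
$$F(t) = \sup_{x \neq y}\Big( \theta(x,t) - \theta(y,t) - \omega(|x-y|,t) \Big),$$
where by the symmetry $x \leftrightarrow y$ it suffices to track this signed difference; $\theta$ obeys $\omega(\cdot,t)$ precisely when $F(t) < 0$. Since $\theta_0$ obeys $\omega_0$ and the modulus condition is strict, $F(0) < 0$, and $F$ is continuous in $t$ because $\theta(\cdot,t)$ is smooth, $\omega$ is jointly continuous, and periodicity supplies the compactness needed for the supremum. If the conclusion failed there would be a first time $T > 0$ with $F(T) = 0$ and $F(t) < 0$ for $t < T$.

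Second, I would localize the breakthrough. The supremum defining $F(T) = 0$ is attained at some pair $(x,y)$ with separation $\xi = |x-y|$. On the region where $\omega(\xi,T) > 2\|\theta(\cdot,T)\|_{L^\infty}$ the bracket is automatically negative, so the relevant $\xi$ lies in the range $\omega(\xi,T) \le 2\|\theta\|_{L^\infty}$ — exactly the range over which \eqref{keyineq} is assumed — and on the torus $\xi$ stays bounded. The delicate endpoint is $\xi \to 0^+$: for a Lipschitz $\theta$ one has $\theta(x,T) - \theta(y,T) \le \|\nabla\theta(\cdot,T)\|_{L^\infty}\,\xi$, and a contact in the limit $\xi\to0$ could only form if $\|\nabla\theta(\cdot,T)\|_{L^\infty}$ reached $\partial_\xi\omega(0^+,T)$. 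This is where the hypothesis $\partial^2_{\xi\xi}\omega(0,t) = -\infty$ enters: it forces $D_{\alpha,\omega}(\xi,t) \to -\infty$ as $\xi\to0^+$ faster than the transport term $\Omega_\omega(\xi,t)\,\partial_\xi\omega(\xi,t)$ can grow, so dissipation is infinitely strong at infinitesimal scales and prevents the gradient from ever saturating $\partial_\xi\omega(0^+,t)$. Hence the contact occurs at a genuine pair of points separated by some $\xi_0 \in (0,\infty)$.

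Third comes the core estimate at the contact pair $(x,y)$, $\xi=\xi_0$, where after relabeling $\theta(x,T)-\theta(y,T)=\omega(\xi_0,T)$. Since $(x,y)$ maximizes $\theta(x')-\theta(y')-\omega(|x'-y'|)$, the first-order conditions force $\nabla\theta(x)=\nabla\theta(y)=\partial_\xi\omega(\xi_0)\,e$ with $e=(x-y)/\xi_0$. Inserting this into \eqref{as1}, the transport contribution to $\frac{d}{dt}[\theta(x,t)-\theta(y,t)]$ equals $\partial_\xi\omega(\xi_0)\,e\cdot[u(x)-u(y)]$, at most $\Omega_\omega(\xi_0,T)\,\partial_\xi\omega(\xi_0,T)$ since $u$ obeys $\Omega_\omega$ and $|e\cdot(u(x)-u(y))|\le|u(x)-u(y)|\le\Omega_\omega(\xi_0)$. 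For the dissipative term I would symmetrize the fractional Laplacian and write $(-\Delta)^\alpha\theta(x)-(-\Delta)^\alpha\theta(y)$ as a principal-value integral of $2\omega(\xi_0)-[\theta(x+h)-\theta(y+h)]-[\theta(x-h)-\theta(y-h)]$ against $|h|^{-d-2\alpha}$. The crude bound $\theta(x+h)-\theta(y+h)\le\omega(\xi_0)$ already shows this integrand is nonnegative, while a sharper pairing of the shifted points — so that the separations $\xi_0+2\eta$ and $|\xi_0-2\eta|$ appear, with $|\theta(p)-\theta(q)|\le\omega(|p-q|)$ applied on those pairs — together with the reduction of the $d$-dimensional integral to a one-dimensional one along $e$ yields $(-\Delta)^\alpha\theta(x)-(-\Delta)^\alpha\theta(y)\ge -D_{\alpha,\omega}(\xi_0,T)$, with $D_{\alpha,\omega}<0$ by concavity. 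Hence the dissipative contribution to $\frac{d}{dt}[\theta(x,t)-\theta(y,t)]$ is at most $D_{\alpha,\omega}(\xi_0,T)$, and altogether $\frac{d}{dt}[\theta(x,t)-\theta(y,t)]\big|_{T}\le\Omega_\omega(\xi_0)\,\partial_\xi\omega(\xi_0)+D_{\alpha,\omega}(\xi_0)$. On the other hand $F(t)<0=F(T)$ for $t<T$ forces the left time-derivative of $[\theta(x,t)-\theta(y,t)]-\omega(\xi_0,t)$ at $T$ to be nonnegative, i.e. $\frac{d}{dt}[\theta(x,t)-\theta(y,t)]\big|_{T}\ge\partial_t\omega(\xi_0,T)$. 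Combining the two gives $\partial_t\omega(\xi_0,T)\le\Omega_\omega(\xi_0,T)\,\partial_\xi\omega(\xi_0,T)+D_{\alpha,\omega}(\xi_0,T)$, contradicting \eqref{keyineq}; at the finitely many corners of $\omega$ one takes the one-sided derivatives as prescribed (the larger $\partial_\xi\omega$, the smaller $\partial_t\omega$) so that this derived inequality remains valid and the contradiction persists.

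I expect the dissipation estimate to be the main obstacle. Bounding $(-\Delta)^\alpha\theta(x)-(-\Delta)^\alpha\theta(y)$ below by $-D_{\alpha,\omega}(\xi_0,T)$ using only the pointwise information $|\theta(p)-\theta(q)|\le\omega(|p-q|)$ requires carefully exploiting concavity in the near field (where the symmetric second difference in \eqref{dtermest} appears) and the modulus bound applied to the pairs at separations $\xi_0+2\eta$ and $2\eta-\xi_0$ in the far field, together with the reduction from the full singular integral on $\Tm^d$ to the scalar expression \eqref{dtermest}; controlling the periodization and the transverse integration so as to land on precisely the stated constants $c_\alpha$ is the technical heart of the matter. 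A secondary subtlety is the rigorous justification that the supremum in $F$ is attained and that the endpoint analysis at $\xi\to0^+$ genuinely rules out gradient saturation, which is where the strict concavity encoded by $\partial^2_{\xi\xi}\omega(0,t)=-\infty$ must be used quantitatively rather than merely qualitatively.
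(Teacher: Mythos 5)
Your Step 3 (the contact-point estimate) is essentially the paper's own argument: the transport bound via first-order conditions at the maximizing pair is a legitimate variant of the paper's Lemma~\ref{flowlem1} (which instead uses finite differences along the flow, and thereby handles corners of $\omega$ without discussing superdifferentials), and your planned near-field/far-field pairing with one-dimensional reduction is exactly the content of Lemma~\ref{disslem}. The genuine gap is in Steps 1 and 2, the scenario identification on which the contradiction rests. Your functional $F(t)=\sup_{x\ne y}\bigl(\theta(x,t)-\theta(y,t)-\omega(|x-y|,t)\bigr)$ satisfies $F(t)\ge 0$ for \emph{every} $t$: as $y\to x$ the quantity inside the supremum tends to $0$, because $\theta$ is continuous and $\omega(0,t)=0$. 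Hence ``$\theta$ obeys $\omega$ iff $F<0$'' is false (one has $F\equiv 0$ whenever $\theta$ obeys $\omega$), there is no first time at which $F$ reaches $0$, and continuity of $F$ gives you nothing. The real content of the paper's Lemma~\ref{scenlemma} is precisely what your framing assumes away: if the modulus is ever violated, there is a first time $t_1$ at which equality is attained at a pair of \emph{distinct} points, the danger being that the supremum is approached only in the limit $|x-y|\to 0$ (gradient saturation), in which case no contact pair exists and the pointwise computation never applies.

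Your mechanism for excluding that limiting scenario is also wrong. You claim that $\partial^2_{\xi\xi}\omega(0,t)=-\infty$ forces $D_{\alpha,\omega}(\xi,t)\to-\infty$ as $\xi\to 0^+$. This is false in general: for the modulus \eqref{sqgmodcon}, $\omega(\xi)=\xi-\xi^{3/2}$ near zero, one has $\omega''(0^+)=-\infty$ and yet
\begin{equation*}
D_{1/2,\omega}(\xi)\le \frac{1}{\pi}\,\xi\,\omega''(\xi)=-\frac{3}{4\pi}\,\xi^{1/2}\longrightarrow 0 ,
\end{equation*}
since in \eqref{dtermest} the near-field integration range shrinks with $\xi$; divergence of $\omega''$ at the origin does not make the dissipation term diverge. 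Moreover, even if $D_{\alpha,\omega}$ did blow up, a dynamical ``dissipation beats transport'' comparison does not address the actual issue, which is static: whether equality at the first breakthrough time must occur at two distinct points. The correct use of the hypothesis $\partial^2_{\xi\xi}\omega(0,t)=-\infty$ is the paper's Taylor/mean-value argument: if $\|\nabla\theta(\cdot,t_1)\|_{L^\infty}=\partial_\xi\omega(0,t_1)=:L$ at some point $x_0$, then along the gradient direction the smooth function $\theta$ satisfies $\theta(x_0+se)-\theta(x_0)\ge Ls-Cs^2$ with $C$ controlled by $\|\theta(\cdot,t_1)\|_{C^2}$, whereas $\omega''(0^+,t_1)=-\infty$ gives $\omega(s,t_1)\le Ls-K(s)s^2$ with $K(s)\to\infty$; hence $\theta(x_0+se,t_1)-\theta(x_0,t_1)>\omega(s,t_1)$ for small $s>0$, i.e.\ the modulus is already strictly violated at nearby \emph{distinct} points, contradicting that at the first breakthrough time the inequality still holds non-strictly everywhere. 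This purely spatial argument is what rules out single-point breakthrough, and it is the step missing from your proposal.
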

\noindent \it Remark. \rm Of course Theorem~\ref{mainthm} would look nicer if we just assumed that
$\omega$ is smooth away from zero. But in applications, it is often convenient to take $\omega$
with a jump in the first derivative at one point. This is by no means necessary, but it handily simplifies
the estimates. On the balance, it is useful to have Theorem~\ref{mainthm} stated in this more general form.\\
2. The condition that \eqref{keyineq} holds only for $\xi,t$ for which
$\omega(\xi,t) \leq 2\|\theta(x,t)\|_{L^\infty}$ is also natural, since other values of $\xi,t$ are not relevant
for the dynamics. This condition is also in principle not crucial and can be instead addressed by modifying $\omega$ in an application at
hand.  \\

Thus the regularity properties of an active scalar are related to
supersolutions of a strongly nonlinear Burgers-type equation
\eqref{keyineq}, with key terms determined by the nature of vector
field $u$ and strength of dissipation. Dissipation terms which are
more general than $(-\Delta)^\alpha$ can also be studied.

The first step towards the proof of Theorem~\ref{mainthm} is the
following lemma, identifying the scenario how a modulus of
continuity may be lost.

\begin{lemma}\label{scenlemma}
Under conditions of Theorem~\ref{mainthm}, suppose that for some
$t>0$ the solution $\theta(x,t)$ does not obey $\omega(\xi,t).$ Then
there must exist $t_1 >0$ and $x \ne y$ such that for all $t <t_1,$
$\theta(x,t)$ obeys $\omega(\xi,t),$ while
\[ \theta(x,t_1) - \theta(y,t_1) = \omega(|x-y|,t_1). \]
\end{lemma}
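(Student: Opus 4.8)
The plan is to track the signed quantity $F(x,y,t):=\theta(x,t)-\theta(y,t)-\omega(|x-y|,t)$ on the torus, so that ``$\theta(\cdot,t)$ obeys $\omega(\cdot,t)$'' is precisely ``$F(x,y,t)<0$ for every genuine pair $x\ne y$'', and a loss of the modulus of continuity is precisely the existence of a genuine pair with $F\ge 0$. I would set $t_1:=\inf\{t>0:\theta(\cdot,t)\text{ does not obey }\omega(\cdot,t)\}$, which is finite by hypothesis, and then establish (i) $t_1>0$; (ii) $\theta(\cdot,t)$ obeys $\omega(\cdot,t)$ for every $t<t_1$; and (iii) at $t=t_1$ equality is attained at a genuine pair. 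Item (ii) is immediate from the definition of the infimum, so the content lies in (i) and (iii), both of which reduce to one persistence statement: if $\theta(\cdot,s)$ obeys $\omega(\cdot,s)$ on a closed interval $[0,t_*]$, then obeyance continues to hold on $[t_*,t_*+\eta]$ for some $\eta>0$.

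The first and most important ingredient is a near-diagonal estimate, which is exactly where the hypothesis $\partial^2_{\xi\xi}\omega(0,t)=-\infty$ enters. Writing $L(t):=\partial_\xi\omega(0^+,t)$ and using concavity to record $\omega(\xi,t)=L(t)\xi-g(\xi,t)$ with $g\ge 0$ and $g(\xi,t)/\xi\to 0$ as $\xi\to0$, I would first observe that obeyance at time $s$ forces $\|\nabla\theta(\cdot,s)\|_{L^\infty}\le L(s)$ (let $y\to x$ in $F<0$). The key point is that this must in fact be strict: if $\partial_{\hat e}\theta(x_0,s)=L(s)$ for some direction $\hat e$, then along $y=x_0-\xi\hat e$ one gets $F=O(\xi^2)+g(\xi,s)$, and since $\partial^2_{\xi\xi}\omega(0,s)=-\infty$ forces $g(\xi,s)\gg\xi^2$, this would give $F>0$ for small $\xi$, contradicting obeyance. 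Hence $\|\nabla\theta(\cdot,s)\|_{L^\infty}<L(s)$, and with the elementary bound $\theta(x,s)-\theta(y,s)\le\|\nabla\theta(\cdot,s)\|_{L^\infty}|x-y|+C|x-y|^2$ one obtains a genuine negative gap $F(x,y,s)\le -c|x-y|<0$ for all $0<|x-y|<\delta$. (When $L(s)=+\infty$ the conclusion is trivial, as $\omega$ is then superlinear at the origin.)

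With the near-diagonal gap in hand, the persistence statement follows by compactness. On the complementary region $|x-y|\ge\delta$, which is compact on the torus, $F(\cdot,\cdot,t_*)$ is continuous and strictly negative, hence bounded above by $-\mu<0$; both estimates persist on a short time interval by joint continuity of $F$ in $(x,y,t)$ (smoothness of $\theta$ and continuity of $\omega$), once the strict gradient inequality $\|\nabla\theta(\cdot,t)\|_{L^\infty}<L(t)$ and the superlinearity $g(\xi,t)/\xi\to0$ are made uniform over the interval. Applying this with $t_*=0$ gives $t_1>0$; applying it with $t_*=t_1$ under the assumption that obeyance still held at $t_1$ would continue obeyance past $t_1$, contradicting the definition of $t_1$. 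Therefore obeyance fails at $t_1$: there is a genuine pair $(x_0,y_0)$ with $F(x_0,y_0,t_1)\ge0$. Finally $F(x_0,y_0,t_1)=0$, for if it were strictly positive, continuity of $t\mapsto F(x_0,y_0,t)$ at the fixed pair would give $F(x_0,y_0,t)>0$ for some $t<t_1$, contradicting (ii). Orienting the pair so that $\theta(x_0,t_1)\ge\theta(y_0,t_1)$ yields $\theta(x_0,t_1)-\theta(y_0,t_1)=\omega(|x_0-y_0|,t_1)$.

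The main obstacle I anticipate is making the near-diagonal estimate uniform in time: for each fixed $t$ the hypothesis $\partial^2_{\xi\xi}\omega(0,t)=-\infty$ yields $g(\xi,t)\gg\xi^2$ and $g(\xi,t)/\xi\to0$, but closing the persistence argument needs a single $\delta>0$ and a single gradient gap valid simultaneously for all $t$ in a short interval. I would obtain this by a Dini-type argument: $g(\xi,t)/\xi$ is monotone in $\xi$ (by concavity of $\omega$), continuous in $(\xi,t)$, and tends to $0$ pointwise, so the convergence is uniform on compact time intervals; combining this with the uniform bounds $\sup_t\|D^2\theta(\cdot,t)\|_{L^\infty}<\infty$ and $\inf_t\big(L(t)-\|\nabla\theta(\cdot,t)\|_{L^\infty}\big)>0$ on the interval produces the required uniform gap. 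The only other point needing care is the regularity in $t$ of $L(t)=\partial_\xi\omega(0^+,t)$, which should be extracted from the continuity and piecewise-$C^1$ hypotheses on $\omega$.
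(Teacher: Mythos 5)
Your overall architecture matches the paper's (which defers the details to Section 3 of \cite{KNS}): locate the first breakthrough time, dispose of pairs bounded away from the diagonal by smoothness of $\theta$ and compactness of the torus, and rule out the one dangerous scenario --- a single-point (gradient) breakthrough $\|\nabla\theta(\cdot,t_1)\|_{L^\infty}=\partial_\xi\omega(0^+,t_1)$ --- using $\partial^2_{\xi\xi}\omega(0,t)=-\infty$. Your near-diagonal strictness computation, $F=O(\xi^2)+g(\xi,s)$ with $g(\xi,s)\gg\xi^2$, is precisely the paper's ``mean value theorem'' step, merely packaged inside a persistence/infimum formulation instead of being invoked at the breakthrough time itself.

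There is, however, one step that fails as written: the Dini argument for time-uniformity. It requires $g(\xi,t)/\xi=L(t)-\omega(\xi,t)/\xi$ to be continuous in $t$, i.e.\ it requires $L(t)=\partial_\xi\omega(0^+,t)$ to be continuous in $t$; but the hypotheses of Theorem~\ref{mainthm} only yield lower semicontinuity, since $L(t)=\sup_{\xi>0}\omega(\xi,t)/\xi$ is a supremum of continuous functions of $t$, and $L$ can genuinely jump. Concretely, take near $\xi=0$
\[
\omega(\xi,t)=\min\left(2\xi-\xi^{3/2},\ \xi-\xi^{3/2}+|t-1|\right),
\]
extended for larger $\xi$ so as to remain increasing and concave. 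This is jointly continuous, piecewise $C^1$ in each variable, and for every fixed $t$ it is a modulus of continuity with $\partial^2_{\xi\xi}\omega(0,t)=-\infty$; yet $L(t)=2$ for $t\ne1$ while $L(1)=1$. Moreover, choosing $|t-1|=\xi/2$ gives $g(\xi,t)/\xi\geq 1/2$, so $\sup_{t\in[1,1+\eta]}g(\xi,t)/\xi$ does not tend to $0$ as $\xi\to0$ for any $\eta>0$: the uniform convergence you need is simply false, and your persistence step applied at $t_*=1$ breaks down. This is not a defect of the lemma, only of the mechanism you chose for uniformity.

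The repair is simple and avoids $L(t)$ altogether. Your strictness step gives $\|\nabla\theta(\cdot,t_*)\|_{L^\infty}<\sup_{\xi>0}\omega(\xi,t_*)/\xi$, so you may fix a single scale $\xi_0>0$ and $\epsilon>0$ with $\omega(\xi_0,t_*)/\xi_0>\|\nabla\theta(\cdot,t_*)\|_{L^\infty}+2\epsilon$. Joint continuity of $\omega$ at the one point $(\xi_0,t_*)$ and continuity of $t\mapsto\|\nabla\theta(\cdot,t)\|_{L^\infty}$ make this persist on a short interval $[t_*,t_*+\eta]$, and concavity gives $\omega(\xi,t)/\xi\geq\omega(\xi_0,t)/\xi_0$ for all $0<\xi\leq\xi_0$; hence $\theta(x,t)-\theta(y,t)\leq\|\nabla\theta(\cdot,t)\|_{L^\infty}|x-y|<\omega(|x-y|,t)-\epsilon|x-y|$ whenever $0<|x-y|\leq\xi_0$ and $t\in[t_*,t_*+\eta]$. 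With this substitute for the Dini step, your persistence lemma, and consequently your whole proof, goes through.
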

The proof of this Lemma is similar to the proof presented in Section
3 of \cite{KNS}. Due to the smoothness of solution and compactness
of the domain, the only issue that one has to contend with is the
possibility that at the breakthrough time $t_1,$ we have
\begin{equation}\label{singpoint}
\|\nabla \theta(x,t_1)\|_\infty = \partial_\xi \omega(0,t_1)
\end{equation} - a single point breakthrough. This could only happen if
$\omega$ is Lipshitz at $\xi=0.$ But even in that case, one can show
that \eqref{singpoint} cannot happen at $t=t_1$ due to the
assumption $\partial^2_{\xi\xi}\omega(0,t_1)=-\infty.$ If \eqref{singpoint} held at $t_1,$
it would have implied, by mean value theorem, that the modulus of continuity is already strictly violated somewhere in a small
neighborhood of $x.$

Next, fix the breakthrough time $t_1$ and points $x,y$ as in
Lemma~\ref{scenlemma}, set $\xi \equiv |x-y|$ and consider
\begin{eqnarray} \nonumber
\left. \partial_t \left( \frac{\theta(x,t)-\theta(y,t)}{\omega(\xi,t)}\right)\right|_{t=t_1}= \\
\frac{(u \cdot \nabla)\theta(x,t_1) - (u \cdot \nabla)\theta(y,t_1)-
(-\Delta)^\alpha \theta(x,t_1) +(-\Delta)^\alpha
\theta(y,t_1)-\partial_t \omega(\xi,t_1)}{\omega(\xi,t_1)}
\label{disst}
\end{eqnarray}
We used $\theta(x,t_1)-\theta(y,t_1)=\omega(\xi,t_1)$ to obtain the
last term in \eqref{disst}. The contribution of the first two terms
in \eqref{disst} is the flow contribution and can be expected to potentially
make the solution less regular. The contribution of the third and
fourth terms in \eqref{disst} is the dissipation contribution and
should work in favor of regularity.

\begin{lemma}\label{flowlem1}
The flow term contribution in \eqref{disst} can be estimated from
above by
\begin{equation}\label{floweq11}
\Omega_\omega(\xi,t_1)\partial_\xi \omega(\xi,t_1),
\end{equation}
with the larger of the one-sided derivatives taken in \eqref{floweq11}
if  $\partial_\xi \omega(\xi,t_1)$ does not exist.
\end{lemma}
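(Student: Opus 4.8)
The plan is to exploit the fact that at the breakthrough time $t_1$ the pair $(x,y)$ maximizes the deficit. By Lemma~\ref{scenlemma} and continuity, $\theta(\cdot,t_1)$ still obeys $\omega(\cdot,t_1)$, now possibly non-strictly, with equality precisely at the separation $\xi=|x-y|$. Hence the function $F(a,b):=\theta(a,t_1)-\theta(b,t_1)-\omega(|a-b|,t_1)$ attains its global maximum value $0$ over $\Tm^d\times\Tm^d$ at $(a,b)=(x,y)$, where $a\ne b$. This is the structural input I would use throughout.

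First I would extract the first-order conditions. Writing $e=(x-y)/|x-y|$, stationarity of $F$ in $a$ and in $b$ at the maximum gives, wherever $\partial_\xi\omega(\cdot,t_1)$ exists at $\xi$, that $\nabla\theta(x,t_1)=\nabla\theta(y,t_1)=\partial_\xi\omega(\xi,t_1)\,e$; in particular both gradients are parallel to $e$ and share the magnitude $\partial_\xi\omega(\xi,t_1)\ge 0$, the sign being forced because $\omega$ is increasing. The directions transverse to $e$ contribute nothing to $\nabla_a|a-b|$ to first order, which is what pins the gradients to the $e$-axis.

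Next I would simply substitute. The flow term becomes $u(x,t_1)\cdot\nabla\theta(x,t_1)-u(y,t_1)\cdot\nabla\theta(y,t_1)=\partial_\xi\omega(\xi,t_1)\,\bigl(u(x,t_1)-u(y,t_1)\bigr)\cdot e$. Since $u$ obeys the modulus $\Omega_\omega$ by hypothesis and $e$ is a unit vector, $\bigl(u(x,t_1)-u(y,t_1)\bigr)\cdot e\le|u(x,t_1)-u(y,t_1)|\le\Omega_\omega(\xi,t_1)$; combined with $\partial_\xi\omega(\xi,t_1)\ge0$ this yields the bound \eqref{floweq11}. It is essential here that we pass to the norm, so that the unknown orientation of $u(x,t_1)-u(y,t_1)$ relative to $e$ plays no role.

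The only point requiring care, and the one I would flag as the main obstacle, is a separation $\xi$ at which $\partial_\xi\omega(\cdot,t_1)$ fails to exist. There the stationarity conditions become one-sided: testing $F$ with inward and outward perturbations of $a$ (and of $b$) along $\pm e$ produces inequalities sandwiching $\nabla\theta\cdot e$ between the one-sided derivatives of $\omega$ at $\xi$. Because $\omega$ is concave the left derivative is the larger one, so replacing $\partial_\xi\omega(\xi,t_1)$ in \eqref{floweq11} by the larger one-sided value only weakens the estimate while keeping it valid; this is exactly why the lemma is stated with the larger one-sided derivative. (In fact the sandwich is consistent only if the two one-sided derivatives agree, so a genuine corner cannot carry the breakthrough at all, but the stated convention makes the bound robust without invoking this.)
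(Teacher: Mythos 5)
Your proof is correct, but it follows a genuinely different route from the paper's. The paper argues in a Lagrangian way and never invokes first-order optimality conditions: it represents $(u\cdot\nabla)\theta(x,t_1)=\lim_{h\to 0}\frac1h\left(\theta(x+hu(x),t_1)-\theta(x,t_1)\right)$ (similarly at $y$), bounds the displaced difference by $\theta(x+hu(x),t_1)-\theta(y+hu(y),t_1)\le \omega(\xi+h|u(x)-u(y)|,t_1)$ using only that $\theta$ obeys $\omega$ at $t_1$ and that $\omega$ is increasing, subtracts the touching identity $\theta(x,t_1)-\theta(y,t_1)=\omega(\xi,t_1)$, divides by $h$, and passes to the limit; the limit is $|u(x)-u(y)|$ times a one-sided derivative of $\omega$, so the bound \eqref{floweq11} (a fortiori with the larger one-sided derivative) drops out with no case analysis at corners. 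You instead argue in Eulerian fashion: $(x,y)$ is a global maximum of $F(a,b)=\theta(a,t_1)-\theta(b,t_1)-\omega(|a-b|,t_1)$, stationarity gives the alignment $\nabla\theta(x,t_1)=\nabla\theta(y,t_1)=\partial_\xi\omega(\xi,t_1)\,e$ (the transverse perturbations being needed exactly as you say), and Cauchy--Schwarz plus $|u(x)-u(y)|\le\Omega_\omega(\xi,t_1)$ finishes. This is sound, with one caveat at corners: your sentence about ``replacing $\partial_\xi\omega$ by the larger one-sided value'' is not by itself an argument there, since at a corner there is no value of $\nabla\theta\cdot e$ to start from; it is your parenthetical observation that actually closes the case --- the one-sided stationarity inequalities give $\omega'_-(\xi)\le\nabla\theta(x)\cdot e\le\omega'_+(\xi)$ while concavity gives $\omega'_+(\xi)\le\omega'_-(\xi)$, so a genuine corner can never be the breakthrough separation and that clause of the lemma is vacuous. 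As for what each route buys: the paper's finite-difference argument is shorter, needs only monotonicity of $\omega$ (not even concavity), and covers non-differentiability points automatically; yours is longer but extracts more structure --- gradient alignment at the touching points and the impossibility of breakthrough at corners --- which is exactly the kind of information exploited in other nonlocal maximum principle arguments, going back to \cite{KNV}.
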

This result follows from our assumption that $u$ must have
$\Omega_\omega$ if $\theta$ has $\omega.$ Indeed,
\[ (u \cdot \nabla)\theta(x,t_1) = \lim_{h \rightarrow 0}\frac1h
\left(\theta(x+u(x)h,t_1)-\theta(x,t_1)\right) \] and a similar
representation holds at the point $y.$
Using $\theta(x,t_1)-\theta(y,t_1) = \omega(\xi,t_1)$ and estimating the other
difference by the modulus of continuity, we obtain
\[ (u \cdot \nabla)\theta(x,t_1)-(u \cdot \nabla)\theta(y,t_1) \leq
\lim_{h \rightarrow 0}\frac1h\left( \omega(\xi + h|u(x)-u(y)|,t_1)-\omega(\xi,t_1) \right)
\leq \Omega_\omega(\xi,t_1)\partial_\xi \omega(\xi,t_1). \]

Observe that the estimate \eqref{floweq11} will not be available in
fluid mechanics equations where there is a nonlocal operator in
front of nonlinear term (such as $3D$ Navier-Stokes equations, for
example, after the Leray projection is applied to remove the
pressure term).

Next, we estimate the dissipation contribution.
\begin{lemma}\label{disslem}
The dissipation contribution in \eqref{disst} can be estimated from
above by $D_{\alpha,\omega}(\xi,t_1),$ given by \eqref{dtermest}.
\end{lemma}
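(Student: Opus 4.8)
The plan is to represent the dissipation contribution as a singular integral, symmetrize it using the reflection that exchanges $x$ and $y$, and then exploit a monotonicity property that lets one replace the unknown increments of $\theta$ by the extremal values allowed by the modulus of continuity. I start from the pointwise formula $(-\Delta)^\alpha f(z)=C_{d,\alpha}\,\mathrm{P.V.}\int_{\Rm^d}\frac{f(z)-f(p)}{|z-p|^{d+2\alpha}}\,dp$ with $C_{d,\alpha}>0$ (on $\Tm^d$ one periodizes the kernel, which only changes constants). Applying this at $z=x$ and $z=y$, subtracting, and suppressing the time $t_1$, the dissipation contribution in \eqref{disst} becomes
\[ -(-\Delta)^\alpha\theta(x)+(-\Delta)^\alpha\theta(y)=C_{d,\alpha}\,\mathrm{P.V.}\int_{\Rm^d}\left[\frac{\theta(p)-\theta(x)}{|x-p|^{d+2\alpha}}+\frac{\theta(y)-\theta(p)}{|y-p|^{d+2\alpha}}\right]dp. \]

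Next I would let $\sigma$ be the reflection across the hyperplane bisecting the segment $[x,y]$, so that $\sigma x=y$, $\sigma y=x$, and $|x-\sigma p|=|y-p|$. Averaging the integrand with its $\sigma$-image and setting $\Phi(p):=\theta(p)-\theta(\sigma p)$ (a $\sigma$-antisymmetric quantity), while using $\theta(x)-\theta(y)=\omega(\xi)$ from Lemma~\ref{scenlemma}, the dissipation takes the symmetric form
\[ \frac{C_{d,\alpha}}{2}\,\mathrm{P.V.}\int_{\Rm^d}\left[\frac{\Phi(p)-\omega(\xi)}{|x-p|^{d+2\alpha}}-\frac{\Phi(p)+\omega(\xi)}{|y-p|^{d+2\alpha}}\right]dp. \]
The singularities at $p=x,y$ are harmless since $\Phi(x)=\theta(x)-\theta(y)=\omega(\xi)$ makes both numerators vanish there, so the principal value converges.

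The key step, and the one I expect to be the main obstacle, is the extremization. Regarded as a function of the single number $\Phi(p)$, the bracketed integrand has derivative $|x-p|^{-d-2\alpha}-|y-p|^{-d-2\alpha}$, which is positive exactly on the half-space closer to $x$ and negative on the half closer to $y$. Since $\theta$ obeys $\omega$ and $|p-\sigma p|=2\,\mathrm{dist}(p,\text{bisector})$, we have $|\Phi(p)|\le\omega(|p-\sigma p|)$; hence the integrand is pointwise maximized by the extremal odd profile $\Phi(p)=\mathrm{sgn}\langle p-m,e\rangle\,\omega(|p-\sigma p|)$, where $m$ is the midpoint and $e=(x-y)/\xi$. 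Keeping the two terms tied to a \emph{common} $\Phi(p)$ is exactly what makes this work: it is this monotonicity against a sign-definite kernel that produces the crucial cancellation — the minus sign in front of $\omega(2\eta-\xi)$ in \eqref{dtermest} — which a naive termwise application of the modulus of continuity to the two points separately would miss.

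Finally I would evaluate the resulting upper bound. Writing $p=m+se+w$ with $w\perp e$, the transverse integral $\int_{\Rm^{d-1}}(a^2+|w|^2)^{-(d+2\alpha)/2}\,dw=\kappa_{d,\alpha}\,|a|^{-1-2\alpha}$ collapses the $d$-dimensional integral to a one-dimensional one in $s$; after folding $s\mapsto -s$ (the reduced integrand is even) and substituting $\eta=|s-\xi/2|$ in the $x$-term and $\eta=s+\xi/2$ in the $y$-term, the two ranges $s\lessgtr\xi/2$ reproduce exactly the two integrals of \eqref{dtermest}, with $c_\alpha=C_{d,\alpha}\kappa_{d,\alpha}/2$ (this pins down the constants promised after \eqref{dtermest}). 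In summary, the reflection-symmetrization followed by the monotone extremization is the heart of the argument; the principal-value bookkeeping near $p=x,y$ and the transverse integration yielding the constants are routine by comparison.
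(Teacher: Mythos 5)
Your proof is correct, and its core mechanism is the same one the paper uses: symmetrize the dissipation term across the perpendicular bisector of $[x,y]$, observe that the resulting kernel difference has a definite sign on each half-space, and then replace the antisymmetric increment of $\theta$ by the extremal profile allowed by the modulus of continuity, which is exactly what produces the $-\omega(2\eta-\xi)$ cancellation in \eqref{dtermest}. Where you genuinely diverge from the paper is in the representation of the fractional Laplacian: you work directly with the pointwise singular-integral (generator) formula and principal values, whereas the paper regularizes first, writing the dissipation contribution as $\lim_{t\to 0}\frac1t\left[(\Phi^\alpha_{d,t}*\theta)(x)-(\Phi^\alpha_{d,t}*\theta)(y)-\omega(\xi)\right]$ and running the same symmetrization argument against the fractional heat kernel $\Phi^\alpha_{d,t}$ (using its positivity, radial monotonicity, and the two-sided bounds \eqref{fracdiffest} from \cite{BSS}) before passing to the limit. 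Your route buys explicitness: the constant $c_\alpha$ comes out directly as the Lévy-kernel normalization times the transverse integral (up to your factor-of-$2$ slip --- folding the even integrand over $s>0$ cancels the $1/2$ from symmetrization, so $c_\alpha=C_{d,\alpha}\kappa_{d,\alpha}$ --- which is immaterial since the paper leaves $c_\alpha$ unspecified), and you avoid invoking heat-kernel estimates altogether. The paper's route buys analytic safety: for each fixed $t>0$ the kernel is bounded, so every integral and rearrangement is absolutely convergent, and no principal-value bookkeeping is needed. That bookkeeping is the one place where your sketch is thinner than it should be: your remark that the vanishing numerators make the principal values converge suffices as stated only for $\alpha<1/2$; for $\alpha\geq 1/2$ (which includes the critical case the paper cares most about) both the convergence at $p=x,y$ and your final splitting of the $s$-integral into the two pieces of \eqref{dtermest} involve conditionally convergent integrals, and one must keep the contributions paired (using the smoothness of $\theta$ and the second-difference structure of the brackets) rather than estimate them term by term. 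This is routine to repair, but it is precisely the complication the paper's semigroup regularization is designed to sidestep.
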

\begin{proof}
Let $\Phi^\alpha_{d,t}(x)$ be generalized heat kernel for fractional diffusion in $\Rm^d,$ so that
\[ e^{-(-\Delta)^\alpha t}f(x) = \int_{\Rm^d} \Phi^\alpha_{d,t}(x-y)f(y)\,dy, \]
$0<\alpha<1.$ By scaling, $\Phi^\alpha_t(x)= t^{-d/2\alpha}\Phi^\alpha(t^{-1/2\alpha}x),$
where $\Phi^\alpha$ corresponds to $t=1.$ It is well known that $\Phi^\alpha(x)$ is positive,
spherically symmetric and monotone decreasing in radial variable (see e.g. \cite{Feller}).
Moreover, we have \cite{BSS}
\begin{equation}\label{fracdiffest}
c_\alpha {\rm min}\left( t^{-d/\alpha}, \frac{t}{|x|^{d+2\alpha}} \right) \leq
\Phi^\alpha_{d,t}(x) \leq c_\alpha^{-1} {\rm min}\left( t^{-d/\alpha}, \frac{t}{|x|^{d+2\alpha}} \right).
\end{equation}
for some $0<c_\alpha<1.$
Now the dissipative contribution in \eqref{disst} can be written as (omitting dependence on $t_1$)
\[
-(-\Delta)^\alpha \theta(x) +(-\Delta)^\alpha \theta(y) = \lim_{t \rightarrow 0}
\frac{1}{t} \left((\Phi^\alpha_{d,t} * \theta) (x) - (\Phi^\alpha_{d,t} * \theta) (y) -\omega(|x-y|) \right),
\]
where we used $\theta(x)-\theta(y)=\omega(|x-y|)$ at the breakthrough point. In the above, $\theta$ is extended to all of $\Rm^d$
by periodicity, and convolution is taken over all $\Rm^d.$ This formula holds for periodic functions of all periods and orientations,
so we can freely assume that $x,y$ lie on the $x_1$ axis, and set $x = (\xi/2,0,\dots,0)$ and $y = (-\xi/2,0,\dots,0).$
Write
\begin{eqnarray*} (\Phi^\alpha_{d,t}*\th)(x)-(\Phi^\alpha_{d,t}*\th)(y)=\iint_{\Rm^d}
[\Phi^\alpha_{d,t}(\tfrac{\xi}{2}-\eta,\nu)-\Phi^\alpha_{d,t}(-\tfrac{\xi}{2}-\eta,\nu)]\th(\eta,\nu)\,
d\eta d\nu
\\
=\int_{\Rm^{d-1}}d\nu\int_0^\infty
[\Phi^\alpha_{d,t}(\tfrac{\xi}{2}-\eta,\nu)-\Phi^\alpha_{d,t}(-\tfrac{\xi}{2}-\eta,\nu)]
[\th(\eta,\nu)-\th(-\eta,\nu)]\,d\eta
\\
\le
\int_{\Rm^{d-1}}d\nu\int_0^\infty
[\Phi^\alpha_{d,t}(\tfrac{\xi}{2}-\eta,\nu)-\Phi^\alpha_{d,t}(-\tfrac{\xi}{2}-\eta,\nu)]
\om(2\eta)\,d\eta
\\
=
\int_0^\infty[\Phi^\alpha_{1,t}(\tfrac{\xi}{2}-\eta)-\Phi^\alpha_{1,t}(-\tfrac{\xi}{2}-\eta)]
\om(2\eta)\,d\eta
\\
=
\int_0^\xi \Phi^\alpha_{1,t}(\tfrac{\xi}{2}-\eta)\om(2\eta)\,d\eta+
\int_0^\infty \Phi^\alpha_{1,t}(\tfrac{\xi}{2}+\eta)[\om(2\eta+2\xi)-\om(2\eta)]\,d\eta
\end{eqnarray*}
where $\Phi^\alpha_{1,t}$ is the $1$-dimensional fractional heat kernel, and $\nu$ is $(d-1)-$dimensional. Here we used symmetry and
monotonicity of the fractional heat kernels together with the observation that
$\int_{\Rm^{d-1}}\Phi^\alpha_{d,t}(\eta,\nu)\,d\nu=\Phi^\alpha_{1,t}(\eta)$.
The last formula can also be
rewritten as
$$
\int_0^{\frac{\xi}{2}}\Phi^\alpha_{1,t}(\eta)[\om(\xi+2\eta)+\om(\xi-2\eta)]\,d\eta+
\int_{\frac{\xi}{2}}^\infty
\Phi^\alpha_{1,t}(\eta)[\om(2\eta+\xi)-\om(2\eta-\xi)]\,d\eta\,.
$$
Recalling that $\int_0^\infty \Phi^\alpha_{1,t}(\eta)\,d\eta=\frac{1}{2}$, we see that the
difference $(\Phi^\alpha_{d,t}*\th)(x)-(\Phi^\alpha_{d,t}*\th)(y)-\om(\xi)$ can be estimated from
above by
\begin{eqnarray*}
\int_0^{\frac{\xi}{2}}\Phi^\alpha_{1,t}(\eta)[\om(\xi+2\eta)+\om(\xi-2\eta)-2\om(\xi)]
\,d\eta
\\
+
\int_{\frac{\xi}{2}}^\infty
\Phi^\alpha_{1,t}(\eta)[\om(2\eta+\xi)-\om(2\eta-\xi)-2\om(\xi)]\,d\eta\,.
\end{eqnarray*}
Observe that both expressions in square brackets are negative due to concavity of $\om.$
Then using \eqref{fracdiffest}, dividing by $t$ and passing to $t \to 0+,$ we finally conclude that
the contribution of the dissipative term to our derivative is bounded
from above by
\begin{eqnarray}\label{dissipcont}
c_\alpha \int_0^{\frac{\xi}{2}}\frac{\om(\xi+2\eta)+\om(\xi-2\eta)-2\om(\xi)}{\eta^{1+2\alpha}}
\,d\eta
\\ \nonumber
+
c_\alpha \int_{\frac{\xi}{2}}^\infty
\frac{\om(2\eta+\xi)-\om(2\eta-\xi)-2\om(\xi)}{\eta^{1+2\alpha}}\,d\eta\,\equiv D_{\alpha,\omega}(\xi).
\end{eqnarray}
\end{proof}

Now given Lemma~\ref{flowlem1}, Lemma~\ref{disslem} and \eqref{keyineq}, \eqref{disst} shows that
\[ \left. \partial_t \left( \frac{\theta(x,t)-\theta(y,t)}{\omega(\xi,t)}\right)\right|_{t=t_1} <0. \]
But this is a contradiction with our choice of $t_1,$ $x$ and $y,$ as it implies that the modulus of continuity must have been violated at an
earlier time. This completes the proof of Theorem~\ref{mainthm}.

\vspace*{0.5cm}
\setcounter{equation}{0}
\section{Critical regularity for active scalars: examples}\label{reg}

In this section, we will look at applications of Theorem~\ref{mainthm} to particular cases of active scalars.
All active scalars \eqref{as1} that we consider have existence of local smooth solutions provided that the initial data are sufficiently
smooth. This result can be proved by standard methods, for example using uniform estimates for Galerkin approximations
(see, e.g. \cite{R} or \cite{KNS} for some particular cases that can be extended to other models).
Another property that all our models share is non-increasing $L^p$ norms for smooth solutions. The strongest control of the solution that follows is given by the $L^\infty$ norm.
This control makes $\alpha=1/2$ critical for the SQG, Burgers and CCF models, and $\alpha = 1-\beta$ for the $\beta$-generalized SQG. Larger values of $\alpha$ correspond to the
subcritical regime, and smooth solution exists globally in this case. This can be proved by standard arguments; again, see for example \cite{R} or \cite{KNS}
for the SQG or Burgers respectively with very similar arguments applicable for other models.

In this section, we will describe how to extend global regularity results to the case of critical dissipation in all of our examples. Nonlocal maximum principles play a key role in this
step. Lemma~\ref{disslem} provides us with an estimate for dissipation term $D_{\alpha,\omega}(\xi,t)$ in \eqref{keyineq}. The next lemma will provide
an estimate for the flow term $\Omega_\omega(\xi,t)$ in a natural class of examples.

Let $\hat{f}(k)$ denote the Fourier transform of function $f$ in $\Rm^d.$
Let $D(k)$ be a $d$-dimensional vector where each component is a linear function of $(k_1,\dots,k_d).$
\begin{lemma}\label{flowlem}
Suppose that $\hat{u}(k) = D(k)|k|^{-\beta}\hat{\th}(k),$ $1 \leq \beta <2,$ $\th$ and $u$ are periodic functions on $\Rm^d.$
If $\th$ has a modulus of continuity $\om(\xi),$ then $u$ has the modulus of continuity
\begin{equation}\label{ucon}
\Omega_\beta(\xi) = A \left( \int_0^\xi \frac{\om(\eta)}{\eta^{2-\beta}}\,d\eta + \xi \int_\xi^\infty \frac{\om(\eta)}{\eta^{3-\beta}}\,d\eta \right),
\end{equation}
where $A$ is a fixed constant (that may depend only on $D,$ $\beta$ and $d$).
\end{lemma}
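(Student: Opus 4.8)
The plan is to realize $u$ as a convolution $u=K*\theta$ and reduce the statement to two radial integral estimates, one near the singularity of the kernel and one over the tail. Since each component of $D(k)$ is linear in $k$, each $\hat u_j(k)$ is a constant-coefficient combination of the multipliers $k_i|k|^{-\beta}$, which is, up to the factor $i$, the symbol of $\partial_i(-\Delta)^{-\beta/2}$. Hence $u=K*\theta$, where each component of $K$ is a fixed linear combination of first derivatives of the Riesz kernel $c_{d,\beta}|w|^{-(d-\beta)}$. The two properties I need are: (i) $K$ is smooth away from the origin and homogeneous of degree $-(d+1-\beta)$, so that $|K(w)|\le C|w|^{-(d+1-\beta)}$ and $|\nabla K(w)|\le C|w|^{-(d+2-\beta)}$ with $C=C(d,\beta,D)$; and (ii) each component of $K$ is odd (it is a derivative of a radial function), so $K$ has mean zero on every sphere and $\int_{\Rm^d}[K(x-z)-K(y-z)]\,dz=0$.

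Using property (ii), for $x\ne y$ with $\xi=|x-y|$ and midpoint $m=(x+y)/2$, I would write
$$u(x)-u(y)=\int_{\Rm^d}[K(x-z)-K(y-z)]\,[\theta(z)-\theta(m)]\,dz,$$
the subtraction of the constant $\theta(m)$ being legitimate precisely because the full integral of $K(x-z)-K(y-z)$ vanishes. I then split the domain into the near region $\{|z-m|<\xi\}$ and the far region $\{|z-m|\ge\xi\}$.

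In the near region I would estimate the two kernels separately, using $|\theta(z)-\theta(m)|\le\omega(|z-m|)$ together with $|K(x-z)|\le C|x-z|^{-(d+1-\beta)}$; recentering at $x$ (resp.\ $y$), bounding $|z-x|<2\xi$, and using concavity $\omega(2s)\le 2\omega(s)$ to absorb the change of center, a radial integration gives a bound $\lesssim\int_0^\xi \omega(r)\,r^{\beta-2}\,dr$, which is the first term of $\Omega_\beta$. In the far region I would use the mean value theorem: since the whole segment $[x,y]$ stays at distance $\ge \xi/2\ge|z-m|/2$ from $z$, property (i) yields $|K(x-z)-K(y-z)|\le\xi\sup_{[x,y]}|\nabla K|\lesssim\xi|z-m|^{-(d+2-\beta)}$; pairing this with $|\theta(z)-\theta(m)|\le\omega(|z-m|)$ and integrating radially gives $\lesssim\xi\int_\xi^\infty \omega(r)\,r^{\beta-3}\,dr$, the second term. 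Absorbing all constants into $A$ finishes the estimate.

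I expect the real work to lie in making the kernel identification precise at the borderline exponents (especially in low dimension) and in passing from $\Rm^d$ to $\Tm^d$: on the torus $K$ is the periodization of the whole-space kernel, which retains the same local singularity and the mean-zero property (as $\hat u(0)=0$), but one must check that the difference is a harmless smooth correction and that the subtraction of $\theta(m)$ and the tail integral converge. Convergence of the tail is exactly where periodicity enters—$\theta$ is bounded and $\omega$ need only be controlled up to the diameter of $\Tm^d$, so that $\int_\xi^\infty \omega(r)\,r^{\beta-3}\,dr$ converges for $\beta<2$. For a function on $\Rm^d$ obeying only a modulus of continuity, the linear growth allowed by concavity makes this tail diverge once $\beta\ge1$, which is why the lemma is stated for periodic data.
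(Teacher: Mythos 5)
Your overall skeleton---realizing $u$ as convolution with a kernel that is homogeneous of degree $-(d+1-\beta)$, smooth away from the origin, with spherical mean zero, and then splitting into a near region controlled by the size of $K$ and a far region controlled by $\nabla K$ via the mean value theorem---is the same as the paper's, which reduces to $D(k)=k_1$, identifies the kernel as $K(r,\varphi)=r^{-d-1+\beta}\Gamma(\varphi)$ with $\Gamma$ smooth and mean zero on the sphere, and then follows the appendix of \cite{KNV}. However, your near-field estimate has a genuine gap at $\beta=1$, which is included in the stated range and is precisely the case needed for SQG and CCF. When $\beta=1$ the kernel has homogeneity $-d$ and is \emph{not} locally integrable, so after subtracting the midpoint value $\theta(m)$ you cannot ``estimate the two kernels separately'': the factor $|\theta(z)-\theta(m)|\le\omega(|z-m|)$ does not vanish at the kernel singularities $z=x$ and $z=y$ (it is of size comparable to $\omega(\xi/2)$ there), so the integral $\int_{|z-m|<\xi}|x-z|^{-d}\,\omega(|z-m|)\,dz$ diverges logarithmically. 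The concavity trick $\omega(2s)\le 2\omega(s)$ cannot repair this, because near $z=x$ the quantity $\omega(|z-m|)$ is comparable to a positive constant, not to $\omega(|z-x|)$. (For the same reason, the identity $\int[K(x-z)-K(y-z)]\,dz=0$ that legitimizes inserting $\theta(m)$ is itself only a principal value at $z=x,y$ when $\beta=1$, so the subsequent splitting into absolutely convergent pieces is already not allowed.) Your argument as written is fine only for $1<\beta<2$, where $|w|^{-(d+1-\beta)}$ is locally integrable.

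The fix is to use the spherical mean-zero property of $K$ a second time, \emph{inside} the near region: pair $K(x-\cdot)$ with $\theta(\cdot)-\theta(x)$ over a ball centered at $x$, and $K(y-\cdot)$ with $\theta(\cdot)-\theta(y)$ over a ball centered at $y$; the subtraction costs nothing since the principal value of $\int_{B(x,2\xi)}K(x-z)\,dz$ vanishes, the mean of $K$ over every sphere centered at its singularity being zero. The integrand then vanishes at the singularity at rate $\omega(|z-x|)$, and radial integration gives $\int_0^{\xi}\omega(r)\,r^{\beta-2}\,dr$ also at $\beta=1$. The bookkeeping terms coming from the mismatch of the two balls, and from cutting along spheres centered at $x$ and $y$ rather than at $m$, are all of far-field type and are absorbed into $\xi\int_\xi^\infty\omega(r)\,r^{\beta-3}\,dr$ exactly as in your far-region estimate. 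This is the structure of the argument in the appendix of \cite{KNV} that the paper's proof invokes.
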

\it Remark. \rm The class of active scalars to which this lemma is applicable includes the SQG equation and the active scalars interpolating between SQG and Euler,
but not the Euler itself. It also includes the CCF model.
\begin{proof}
The proof easily reduces to the case where $D(k)$ is scalar and is equal to $k_1.$ Then $D(k)|k|^{-\beta}$ is a singular integral operator
which corresponds to a convolution with a kernel $K(r,\varphi) = r^{-d-1+\beta}\Gamma(\varphi),$ where $(r,\varphi)$ are spherical coordinates and
$\Gamma$ is smooth and has mean zero on a unit sphere. The argument for obtaining \eqref{ucon} is then completely parallel to that sketched in the Appendix of \cite{KNV}.
\end{proof}

Let us now discuss some applications of Theorem~\ref{mainthm}, starting with the simpler ones. \\

\it Burgers equation in one dimension \cite{KNS} \rm
\begin{equation}\label{bur1}
\theta_t = \theta \theta_x - (-\Delta)^\alpha \theta, \,\,\,\theta(x,0)=\theta_0(x).
\end{equation}
For the Burgers equation, the critical dissipation strength is $\alpha = 1/2.$ If $\alpha>1/2,$ the existence of global regular solution can be
established using standard methods. The critical $\alpha =1/2$ was recently resolved in \cite{KNS}, where it was shown that the global regular solution exists and is unique for
the initial data $\theta_0 \in H^{1/2}$ (the solution becomes real analytic for any $t>0$). Also, it was shown in \cite{KNS} that finite time blow up can occur if $\alpha <1/2,$ making the picture complete for the Burgers equation. Let us sketch here how Theorem~\ref{mainthm} can be applied to prove global regularity for the critical $\alpha=1/2$ case.

\begin{theorem}\label{burreg11}
Let the initial data $\theta_0$ belong to $H^{s},$ $s \geq 1/2$ Sobolev space. Then the critical dissipative Burgers equation has a unique global solution $\theta(x,t)
\in C([0,\infty),H^s) \cap L^2([0,\infty), H^{s+1/2})$ which is real analytic
for every $t>0.$ Moreover, if $\th_0$ belongs to the Sobolev space $W^{1,\infty},$ then
\begin{equation}\label{gradburest}
\|\th'(x,t)\|_{L^\infty} \leq  \|\theta'_0\|_{L^\infty} \exp(C\|\theta_0\|_{L^{\infty}})
\end{equation}
for all $t>0.$
\end{theorem}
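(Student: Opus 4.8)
The plan is to reduce the entire theorem to a single a priori estimate — the gradient bound \eqref{gradburest} — and then recover the remaining assertions (global existence, the $L^2_tH^{s+1/2}$ control, analyticity, uniqueness) by standard parabolic bootstrapping. For the Burgers equation $u=\theta$, so $\Omega_\omega=\omega$, and $\alpha=1/2$; my goal is to exhibit a time-\emph{independent} modulus of continuity $\omega$ that is preserved by the flow and whose slope at the origin controls $\|\theta'\|_{L^\infty}$. Once such an $\omega$ is built, Theorem~\ref{mainthm} reduces matters to verifying the stationary inequality $0>\omega(\xi)\omega'(\xi)+D_{1/2,\omega}(\xi)$, which need only hold where $\omega(\xi)\le 2\|\theta_0\|_{L^\infty}$, since the Burgers maximum principle keeps $\|\theta(t)\|_{L^\infty}$ non-increasing.

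First I would construct $\omega$ explicitly in the spirit of \cite{KNV}: set $\omega(\xi)=\xi-\xi^{3/2}$ on $[0,\delta]$ for a small $\delta$, and continue it for $\xi>\delta$ with $\omega'(\xi)=c/\xi$, i.e. a logarithmically growing, unbounded, concave tail. This $\omega$ has $\omega'(0)=1$ and $\partial^2_{\xi\xi}\omega(0)=-\infty$, as required. Verifying $\omega\omega'+D_{1/2,\omega}<0$ splits into two regimes. Near the origin the affine part contributes nothing to the second differences in \eqref{dtermest}, so the $-\xi^{3/2}$ term alone produces $D_{1/2,\omega}(\xi)\sim -c'\xi^{1/2}$, which dominates the flow contribution $\omega\omega'\sim\xi$ for $\xi$ small; this is exactly where the $-\infty$ concavity at $0$ is used. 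For large $\xi$ the leading part of $D_{1/2,\omega}$ comes from the $-2\omega(\xi)$ terms and behaves like $-c''\omega(\xi)/\xi$, while the flow term is $\omega\omega'=c\,\omega(\xi)/\xi$; choosing $c<c''$ makes the inequality strict, and the intermediate range is handled by keeping a fixed margin in the constants. I expect this verification — maintaining the strict inequality uniformly across all scales while respecting the corner at $\xi=\delta$ — to be the main obstacle, and the one most sensitive to the precise value of $c_\alpha$ in \eqref{dtermest}.

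The decisive trick is then the scaling symmetry of the critical Burgers equation: if $\theta(x,t)$ solves \eqref{bur1} so does $\theta(\lambda x,\lambda t)$, and a direct change of variables in \eqref{dtermest} shows that $\omega^{(\lambda)}(\xi):=\omega(\lambda\xi)$ satisfies $\omega^{(\lambda)}(\omega^{(\lambda)})'+D_{1/2,\omega^{(\lambda)}}=\lambda\,[\omega\omega'+D_{1/2,\omega}](\lambda\xi)<0$, so the whole family $\{\omega^{(\lambda)}\}$ is preserved. Given $\theta_0\in W^{1,\infty}$ I would take the least $\lambda$ for which $\theta_0$ obeys $\omega^{(\lambda)}$; then $\theta(t)$ obeys $\omega^{(\lambda)}$ for all $t$, whence $\|\theta'(t)\|_{L^\infty}\le(\omega^{(\lambda)})'(0)=\lambda$. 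To bound $\lambda$, I use $|\theta_0(x)-\theta_0(y)|\le\min(\|\theta_0'\|_{L^\infty}|x-y|,\,2\|\theta_0\|_{L^\infty})$ together with monotonicity of $\xi/\omega(\xi)$ (a consequence of concavity): the single binding constraint occurs at the transition point, giving $\lambda\le\|\theta_0'\|_{L^\infty}\,\xi^{*}/\omega(\xi^{*})$ with $\omega(\xi^{*})=2\|\theta_0\|_{L^\infty}$. Because $\omega$ grows only logarithmically, $\xi^{*}=\omega^{-1}(2\|\theta_0\|_{L^\infty})$ is of size $\exp(C\|\theta_0\|_{L^\infty})$ in the large-amplitude regime and of size $2\|\theta_0\|_{L^\infty}$ in the small-amplitude regime, so $\xi^{*}/\omega(\xi^{*})\le\exp(C\|\theta_0\|_{L^\infty})$ in both cases; this is exactly \eqref{gradburest}.

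Finally I would close by bootstrapping. With $\|\theta'(t)\|_{L^\infty}$ bounded uniformly and $\|\theta(t)\|_{L^\infty}$ non-increasing, a standard $\dot H^s$ energy estimate — controlling the nonlinear term by a commutator estimate against $\|\theta'\|_{L^\infty}$ — yields $\tfrac{d}{dt}\|\theta\|_{\dot H^s}^2+\|\theta\|_{\dot H^{s+1/2}}^2\le C\|\theta'\|_{L^\infty}\|\theta\|_{\dot H^s}^2$. Grönwall then gives the $C([0,\infty),H^s)$ bound and, after integration in time, membership in $L^2([0,\infty),H^{s+1/2})$; this also upgrades the local solution to a global one, and a routine energy estimate for the difference of two solutions yields uniqueness. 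Real analyticity for $t>0$ follows from these a priori bounds by a Gevrey-class estimate adapted to the order-one dissipation $(-\Delta)^{1/2}$, propagating an exponentially weighted Fourier norm whose analyticity radius grows linearly in $t$; the borderline case $s=1/2$ is reached by first running the argument for $s>1/2$ and then invoking the instantaneous smoothing.
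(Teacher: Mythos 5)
Your proposal is correct and follows essentially the same route as the paper's proof: a stationary concave modulus of continuity (linear minus a power correction near zero, logarithmically growing tail) shown via Theorem~\ref{mainthm} to satisfy $\omega\omega'+D_{1/2,\omega}<0$, then the scaling family $\omega_B(\xi)=\omega(B\xi)$ of the critical equation to fit arbitrary $W^{1,\infty}$ data and extract the bound \eqref{gradburest}, with the $H^{s}$ local theory, uniqueness and analyticity deferred to standard energy/Gevrey arguments exactly as the paper defers them to \cite{KNS}. The only difference is cosmetic: you take $\omega(\xi)=\xi-\xi^{3/2}$ with tail $\omega'(\xi)=c/\xi$ (the modulus the paper itself uses for the $\beta$-generalized SQG), while the paper's Burgers proof uses $\xi/(1+K\sqrt{\xi})$ with a $C_K\log\xi$ tail; the two verifications are interchangeable.
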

\it Remark. \rm If $\th_0$ does not belong to $W^{1,\infty},$ then the bound \eqref{gradburest} is still valid if an appropriate factor
depending on time and blowing up at $t=0$ is added.
\begin{proof}
We will construct a stationary modulus of continuity which is conserved by evolution.
Let $K$ be a parameter to be fixed later. Set $\xi_0= \left(\frac{K}{4\pi}\right)^2.$ The modulus of continuity is given by
\begin{equation}\label{burmodcon}
\omega(\xi) = \left\{
\begin{array}{ll}
 \frac{\xi}{1+K\sqrt{\xi}}, & 0<\xi\leq \xi_0 \\
 C_K \log \xi, & \xi > \xi_0.
\end{array} \right.
\end{equation}
Here $C_K$ is chosen so that $\omega$ is continuous at $\xi = \xi_0;$ one can check that $C_K \sim (\log K)^{-1}$
if $K$ is sufficiently
large. One can also check that if $K$ is sufficiently large, then $\omega$ is
concave, with negative and increasing second order derivative on
both intervals in \eqref{burmodcon} (on the first interval,
$\omega''(\xi)= -K(3\xi^{-1/2}+K)/4(1+K\sqrt{\xi})^3$). The first
derivative of $\omega$ may jump at $\xi_0,$ but the left
derivative at $\xi_0$ is $\sim K^{-2},$ while the right derivative
is $\sim K^{-2} (\log K)^{-1}.$ We choose $K$ large enough so that
the left derivative is larger than the right derivative assuring
concavity.

It remains to check the condition \eqref{keyineq}, which in our case reduces to
$\omega(\xi) \partial_\xi \omega(\xi) + D_{1/2,\omega}(\xi) <0$ for all $\xi >0.$
We will actually show a stronger result: that
\begin{equation}\label{burmodconest}
2\omega(\xi) \partial_\xi \omega(\xi) + D_{1/2,\omega}(\xi) <0
\end{equation}
for all $\xi>0.$ This will be useful for us in Section~\ref{timeapp} for studying the solutions with rough initial data.

I. \it The case $\xi \leq \xi_0.$ \rm Using the second order
Taylor formula and the fact that $\omega''$ is negative and
monotone increasing on $[0,\xi],$ we obtain that
\[ \omega(\xi+2\eta)+\omega(\xi-2\eta) \leq
\omega(\xi)+\omega'(\xi)2\eta+\omega(\xi)-\omega'(\xi)2\eta+2\omega''(\xi)\eta^2.
\]
This leads to an estimate
\begin{equation}\label{smallxidiss} \int_0^{\frac{\xi}{2}}\frac{\om(\xi+2\eta)+\om(\xi-2\eta)-2\om(\xi)}{\eta^{2}}
\,d\eta \leq \frac{1}{\pi} \xi \omega''(\xi). \end{equation}
From \eqref{burmodcon}, we find that
\[ \omega(\xi)\omega'(\xi) = \frac{\xi^{1/2}(2\xi^{1/2}
+K\xi)}{2(1+K\xi^{1/2})^3}, \] while
\[ \frac1\pi \xi \omega''(\xi) =
-\frac{K(3\xi^{1/2}+K\xi)}{4\pi(1+K\xi^{1/2})^3}. \] Taking into
account that $\xi \leq \xi_0,$ we find
\begin{equation}\label{for1}
2\omega(\xi)\omega'(\xi) +D_{1/2,\omega}(\xi) \leq 0,
\end{equation}
for any $K$.

II. \it The case $\xi > \xi_0.$ \rm 
Due to concavity, we have
$\omega(\xi+2\eta) \leq \omega(2\eta-\xi)+\omega(2\xi),$ and thus
\[ \int_{\frac{\xi}{2}}^\infty
\frac{\om(2\eta+\xi)-\om(2\eta-\xi)-2\om(\xi)}{\eta^{1+2\alpha}}\,d\eta \leq \frac1\pi \int_{\xi/2}^\infty
\frac{\omega(2\xi)-2\omega(\xi)}{\eta^2}\,d\eta. \] Clearly we
have $\omega(2\xi)\leq \frac32 \omega(\xi)$ for $\xi \geq \xi_0$
provided that $K$ was chosen large enough. In this case, we obtain
\begin{equation}\label{largexidiss} \int_{\frac{\xi}{2}}^\infty
\frac{\om(2\eta+\xi)-\om(2\eta-\xi)-2\om(\xi)}{\eta^{1+2\alpha}}\,d\eta \leq -\frac{\omega(\xi)}{\pi \xi}.\end{equation}
Now it follows from \eqref{burmodcon} that $\omega(\xi)\omega'(\xi) =
C_K^2\xi^{-1}\log \xi,$ while $\xi^{-1}\omega(\xi) =
C_K\xi^{-1}\log \xi.$ Given $C_K \sim (\log K)^{-1}$, it as clear that
\begin{equation}\label{for2}
2\omega(\xi)\omega'(\xi)+D_{1/2,\omega}(\xi) \leq 0,\ \ \ \xi\geq
\xi_0
\end{equation}
if only $K$ was chosen sufficiently large.

Thus, by Theorem~\ref{mainthm}, the modulus of continuity \eqref{burmodcon} is preserved by the evolution. Notice that
we made no assumption on the size of the period, so the result is valid for any period.
Observe the scaling properties of the critical Burgers equation: if $\th(x,t)$ is a solution, then so is $\th(B x, B t)$
for any $B >0.$ Then any rescaled modulus of continuity $\om_B(\xi) = \om(B\xi)$ is also preserved by the evolution.
Now given any sufficiently smooth initial data $\th_0,$ we can find $B$ such that $\th_0$ has $\omega_B.$ This follows from the fact that $\om(\xi)$ is unbounded as
$\xi \rightarrow \infty.$  Given that $\omega'(0)=1,$ preservation of $\omega_B$ by the solution $\th(x,t)$ implies the a-priori bound $\|\theta'(x,t)\|_{L^\infty} \leq B,$
for all $t>0.$ This is much stronger than control of $L^\infty$ norm of $\th(x,t),$ and allows to extend the local smooth solution indefinitely.
Finding the value of $B$ from \eqref{burmodcon}
leads to \eqref{gradburest}.

The minimal initial data regularity condition $\theta_0 \in H^{1/2}$ is needed to ensure existence of local solution (which becomes smooth for $t>0$ is $\alpha>0$).
The arguments allowing to establish this are fairly standard, similarly to the arguments needed to establish uniqueness of the solution. We refer to \cite{KNS} for more details.
\end{proof}

\it The SQG \cite{KNV} and the CCF model. \rm The technical aspects of these two models
for the proof of global regularity in critical case are identical. Similarly to the Burgers equation, the velocity $u$ is a zeroth order Fourier
multiplier of $\th$ on the Fourier side (though this time, unlike Burgers, this is a nontrivial multiplier). Similarly, $L^\infty$ norm control makes $\alpha = 1/2$ critical.

\begin{theorem}\label{SQGCCF}
The critical SQG equation has unique global smooth solution if the initial data $\th_0 \in H^1.$
The critical CCF equation has unique global smooth solution if the initial data $\th_0 \in H^{1/2}.$
Moreover, if $\th_0$ belongs to the Sobolev space $W^{1,\infty},$ then for either equation we have
\begin{equation}\label{gradsqgest}
\|\nabla \th(x,t)\|_{L^\infty} \leq  \|\nabla \theta_0\|_{L^\infty} \exp\exp(C\|\theta_0\|_{L^{\infty}})
\end{equation}
for all $t>0.$
\end{theorem}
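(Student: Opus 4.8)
The plan is to follow the same template as the proof of Theorem~\ref{burreg11}: exhibit a single \emph{stationary} modulus of continuity $\omega$ that is preserved by the evolution by verifying the key inequality \eqref{keyineq} of Theorem~\ref{mainthm}, and then rescale it to fit arbitrary smooth data. For both the SQG and CCF equations the velocity is recovered from $\theta$ through a zeroth-order Fourier multiplier of exactly the type covered by Lemma~\ref{flowlem} with $\beta=1$: for SQG one has $\hat u(k)=i\,k^\perp|k|^{-1}\hat\theta(k)$, and for CCF $\hat u(k)=-i\,\mathrm{sgn}(k)\,\hat\theta(k)=-i\,k\,|k|^{-1}\hat\theta(k)$, both of the form $D(k)|k|^{-\beta}\hat\theta(k)$ with $\beta=1$. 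Hence if $\theta$ obeys $\omega$ then $u$ obeys
\[ \Omega_\omega(\xi)=A\left( \int_0^\xi \frac{\omega(\eta)}{\eta}\,d\eta + \xi\int_\xi^\infty \frac{\omega(\eta)}{\eta^2}\,d\eta \right), \]
and, since $\alpha=1/2$ is critical and $\omega$ is time-independent, the condition \eqref{keyineq} reduces to checking $\Omega_\omega(\xi)\,\omega'(\xi)+D_{1/2,\omega}(\xi)<0$ for all $\xi>0$.

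For the modulus itself I would take the KNV-type function that is essentially linear near the origin and grows extremely slowly at infinity: $\omega(\xi)=\xi-\xi^{3/2}$ for $0<\xi\le\delta$, continued for $\xi>\delta$ by a concave profile with $\omega'(\xi)\sim\gamma/(\xi\log\xi)$ for a small parameter $\gamma$, so that $\omega(\xi)\sim\gamma\log\log\xi$ as $\xi\to\infty$. The three structural features that matter are $\omega'(0)=1$ (which converts preservation of $\omega$ into a Lipschitz bound), $\omega''(0^+)=-\infty$ (required by Theorem~\ref{mainthm}, and producing strong dissipation at small scales), and concavity together with unboundedness of $\omega$ (needed for the rescaling step). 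One first checks that $\delta$ and $\gamma$ can be chosen so that $\omega$ is genuinely concave across the matching point $\xi=\delta$.

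The verification splits into two regimes, and the large-$\xi$ regime is where the real difficulty lies. For small $\xi$ the dissipation dominates: the second-order Taylor estimate used in \eqref{smallxidiss} gives $D_{1/2,\omega}(\xi)\lesssim\xi\,\omega''(\xi)\sim-\xi^{1/2}$, while $\Omega_\omega(\xi)\omega'(\xi)$ is at most of order $\xi\log(1/\xi)$ (since $\omega(\eta)\approx\eta$ on $(0,\xi)$ and $\omega'(\xi)\approx1$), so the dissipative term wins for all small $\xi$. For large $\xi$ concavity gives, as in \eqref{largexidiss}, a lower bound $-D_{1/2,\omega}(\xi)\gtrsim\omega(\xi)/\xi$, and the task is to show $\Omega_\omega(\xi)\omega'(\xi)$ is smaller. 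Here is the main obstacle and the reason the SQG/CCF construction is more delicate than the Burgers one: because $u$ is now a genuine singular integral, $\Omega_\omega$ contains the term $\int_0^\xi\omega(\eta)/\eta\,d\eta$, which for a slowly growing $\omega$ behaves like $\omega(\xi)\log\xi$ and carries an extra logarithm absent in Burgers, where $\Omega_\omega=\omega$. A modulus growing only like $\log\xi$ would therefore produce $\Omega_\omega\omega'\sim(\log\xi)^2/\xi$, overwhelming the dissipation $\sim\log\xi/\xi$ no matter how small the constant. This is exactly what forces the doubly-logarithmic rate: with $\omega'(\xi)\sim\gamma/(\xi\log\xi)$ the extra logarithm is cancelled, leaving $\Omega_\omega(\xi)\omega'(\xi)\sim\gamma\,\omega(\xi)/\xi$, which the dissipation dominates once $\gamma$ is small. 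Carrying out these estimates with honest constants, and checking the matching at $\xi=\delta$, is the technical heart of the proof.

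Finally I would close the argument by scaling, exactly as for Burgers. The critical SQG and CCF equations are invariant under $\theta(x,t)\mapsto\theta(Bx,Bt)$, so each rescaled modulus $\omega_B(\xi)=\omega(B\xi)$ is also preserved, with $\omega_B'(0)=B$. Given smooth $\theta_0$, unboundedness of $\omega$ lets one pick $B$ so that $\theta_0$ obeys $\omega_B$; preservation then yields the a priori bound $\|\nabla\theta(\cdot,t)\|_{L^\infty}\le B$ for all $t>0$. The smallest admissible $B$ is dictated by the large-scale matching: $\theta_0$ saturates its oscillation $\approx2\|\theta_0\|_{L^\infty}$ at separation $\sim\|\theta_0\|_{L^\infty}/\|\nabla\theta_0\|_{L^\infty}$, and forcing $\omega(B\,\cdot)$ to exceed $2\|\theta_0\|_{L^\infty}$ there requires, since $\omega(\zeta)\sim\gamma\log\log\zeta$, a value of $B$ of order $\|\nabla\theta_0\|_{L^\infty}\exp\exp(C\|\theta_0\|_{L^\infty})$. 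This is precisely the source of the double exponential in \eqref{gradsqgest}, the slower doubly-logarithmic growth of the SQG/CCF modulus replacing the single logarithm of the Burgers modulus. Together with standard local well-posedness in $H^1$ for SQG and $H^{1/2}$ for CCF and the parabolic smoothing that makes the solution real analytic for $t>0$, the a priori control of $\|\nabla\theta\|_{L^\infty}$ precludes finite-time singularities and produces the global smooth solutions; I regard the local theory and uniqueness as standard and handled as in the subcritical references.
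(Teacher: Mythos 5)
Your proposal is correct and follows essentially the same route as the paper: the same KNV-type modulus (your $\omega'(\xi)\sim\gamma/(\xi\log\xi)$ for $\xi>\delta$ is the paper's $\omega'(\xi)=\gamma/(\xi(4+\log(\xi/\delta)))$), the same use of Lemma~\ref{flowlem} to get $\Omega_\omega(\xi)\lesssim\omega(\xi)\log\xi$ (packaged in the paper as Lemma~\ref{Omom}), the same two-regime verification of \eqref{keyineq}, and the same scaling argument producing the double exponential in \eqref{gradsqgest}. You also correctly identify the key balance --- the extra logarithm from the singular integral forcing doubly-logarithmic growth of $\omega$ --- which is exactly the point the paper emphasizes after its proof.
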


Again, we will focus on the global regularity proof assuming smooth $\theta_0.$

The modulus of continuity that is conserved by the evolution in this case is given by
\begin{eqnarray}\label{sqgmodcon}
\omega(\xi) =  \xi - \xi^{3/2}, & 0<\xi \leq \delta \\
\omega'(\xi) =  \frac{\gamma}{\xi(4+\log(\xi/\delta))}, & \xi >\delta, \nonumber
\end{eqnarray}
where $\delta$ and $\gamma$ are parameters that will be chosen below.
On the technical level, the main difference between the SQG-CCF and Burgers is the difference in modulus of continuity of $u$ and $\th.$ The flow $u$ is a Riesz
transform of $\th.$ The singular integral operators like Riesz transform preserve H\"older classes, but our $\omega$ is Lipshitz at $\xi=0.$ In this case, we lose a logarithm,
as is stated in Lemma~\ref{Omom} below. This leads to weaker control over the possible growth of $\|\th\|_{L^\infty}$ in \eqref{gradsqgest}.

\begin{lemma}\label{Omom}
For the modulus of continuity $\omega$ given by \eqref{sqgmodcon}, the modulus of continuity of the vector field $u$ satisfies
\begin{equation}\label{sqgmodconu}
\Omega_\omega(\xi,t) \leq A\omega(\xi)(4+|\log(\xi/\delta)|),
\end{equation}
provided that $\delta>0$ in \eqref{sqgmodcon} is sufficiently small.
\end{lemma}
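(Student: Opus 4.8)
The plan is to read off $\Omega_\omega$ from Lemma~\ref{flowlem}. For the SQG velocity $u=\nabla^\perp(-\Delta)^{-1/2}\theta$ and for the CCF velocity $u=H\theta$ the symbol has the form $\hat u(k)=D(k)|k|^{-1}\hat\theta(k)$ with $D(k)$ linear, so Lemma~\ref{flowlem} applies with $\beta=1$ and gives
\[
\Omega_\omega(\xi)=A\left(\int_0^\xi\frac{\omega(\eta)}{\eta}\,d\eta+\xi\int_\xi^\infty\frac{\omega(\eta)}{\eta^2}\,d\eta\right)=A\,(I_1(\xi)+I_2(\xi)).
\]
It then suffices to bound each of $I_1$ and $I_2$ by a fixed multiple of $\omega(\xi)(4+|\log(\xi/\delta)|)$, treating separately the inner regime $\xi\le\delta$, where $\omega(\eta)=\eta-\eta^{3/2}$, and the outer regime $\xi>\delta$, where $\omega$ is the slowly growing logarithmic branch determined by \eqref{sqgmodcon}.

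For the local term $I_1$ I would use concavity: since $\omega(0)=0$ and $\omega$ is concave, $\omega(\eta)/\eta$ is nonincreasing and bounded by $\omega'(0)=1$. When $\xi\le\delta$ this gives $I_1\le\xi\lesssim\omega(\xi)$ directly. When $\xi>\delta$ I would split at $\delta$, estimate $\int_0^\delta\omega(\eta)\eta^{-1}\,d\eta\le\delta$, and bound $\int_\delta^\xi\omega(\eta)\eta^{-1}\,d\eta\le\omega(\xi)\log(\xi/\delta)$ by monotonicity of $\omega$; since $\omega(\xi)\ge\omega(\delta)\gtrsim\delta$ in this range, the two pieces combine into $\omega(\xi)(4+\log(\xi/\delta))$ up to a constant.

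For the nonlocal term $I_2$ the clean device is integration by parts, which isolates a harmless leading term and transfers the remaining weight onto $\omega'$:
\[
I_2(\xi)=\xi\int_\xi^\infty\frac{\omega(\eta)}{\eta^2}\,d\eta=\omega(\xi)+\xi\int_\xi^\infty\frac{\omega'(\eta)}{\eta}\,d\eta,
\]
the boundary contribution at infinity vanishing because $\omega$ grows far slower than $\eta$. In the outer regime $\omega'(\eta)/\eta=\gamma\,\eta^{-2}(4+\log(\eta/\delta))^{-1}$, so $\xi\int_\xi^\infty\omega'(\eta)\eta^{-1}\,d\eta\le\gamma(4+\log(\xi/\delta))^{-1}$, which is dominated by $\omega(\xi)$ and costs nothing. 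In the inner regime one splits at $\delta$: on $(\xi,\delta)$ the integrand is $(1-\tfrac32\eta^{1/2})/\eta\le\eta^{-1}$, producing exactly the $\xi\log(\delta/\xi)\approx\omega(\xi)|\log(\xi/\delta)|$ that accounts for the logarithmic factor in \eqref{sqgmodconu}.

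The one genuinely delicate point, and the main obstacle, is the far tail $\xi\int_\delta^\infty\omega'(\eta)\eta^{-1}\,d\eta$ in the inner regime: a direct computation shows it is of order $\gamma\xi/\delta$, which for $\xi$ near $\delta$ is comparable to $\gamma$ and must be absorbed into $A\,\omega(\xi)(4+|\log(\xi/\delta)|)$. Here the key observation is that validity of \eqref{sqgmodcon} as a modulus of continuity already forces $\gamma\lesssim\delta$: matching the one-sided derivatives at the kink $\xi=\delta$ requires $\tfrac{\gamma}{4\delta}\le\omega'(\delta^-)\approx1$ for concavity. Under this same condition $\gamma\xi/\delta\lesssim\xi\lesssim\omega(\xi)$, so the tail is controlled with a universal constant. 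Collecting the bounds on $I_1$ and $I_2$, and choosing $\delta$ small, yields \eqref{sqgmodconu} with $A$ depending only on the multiplier $D$, on $\beta=1$, and on the dimension.
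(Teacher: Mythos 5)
Your proposal is correct and follows essentially the same route as the paper: the paper's proof consists of invoking Lemma~\ref{flowlem} with $\beta=1$ and deferring the ``simple estimates'' to \cite{KNV}, and your argument simply carries out those estimates explicitly (splitting at $\delta$, using $\omega(\eta)\leq\eta$ on the inner branch, and the slow $\log\log$ growth of the outer branch). Your observation that concavity at the kink $\xi=\delta$ forces $\gamma\lesssim\delta$, which controls the far tail of the nonlocal integral, is a legitimate and necessary point that the paper leaves implicit.
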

This result follows from Lemma~\ref{flowlem} and some simple estimates, see \cite{KNV} for more details.

It remains to check \eqref{keyineq}, which in the SQG-CCF case reduces to
\begin{equation}\label{sqgin11}
A\omega(\xi)(4+|\log(\xi/\delta)|) \partial_\xi \omega(\xi) + D_{1/2,\omega}(\xi) <0
\end{equation} for all $\xi >0.$

I. $0<\xi <\delta.$ Similarly to the Burgers case \eqref{smallxidiss}, we have
\[ D_{1/2,\omega}(\xi)\leq \int_0^{\frac{\xi}{2}}\frac{\om(\xi+2\eta)+\om(\xi-2\eta)-2\om(\xi)}{\eta^{2}}
\,d\eta \leq \frac{1}{\pi} \xi \omega''(\xi) = -\frac{3}{4\pi}\xi^{1/2}. \]
On the other hand, the first term in \eqref{sqgin11} does not exceed
$A\xi(4+\log(\delta/\xi)),$ which is clearly smaller in absolute value if $\delta$ was chosen sufficiently small.

II. $\xi \geq \delta.$ From \eqref{sqgmodcon}, it follows that in this range
\[ A\omega(\xi)(4+\log(\xi/\delta)) \partial_\xi \omega(\xi) = \frac{\gamma A \omega(\xi)}{\xi}. \]
On the other hand, if $\gamma$ was chosen sufficiently small, then $\frac32\omega(\xi) > \omega(2\xi),$
and so, due to the concavity of $\omega,$
\[  D_{1/2,\omega}(\xi)\leq \int_{\frac{\xi}{2}}^\infty \frac{\om(\xi+2\eta)+\om(2\eta-\xi)-2\om(\xi)}{\eta^{2}}\,d\eta\leq
\frac{\omega(\xi)}{2} \int_{\frac{\xi}{2}}^\infty \frac{d\eta}{\eta^2} \leq \frac{\omega(\xi)}{\xi}. \]
It is clear that if $\gamma$ were chosen small enough, then the dissipative term dominates, and Theorem~\ref{mainthm} is applicable.

The rest of the proof of Theorem~\ref{SQGCCF} follows the Burgers case. The scaling properties of the critical equations are the same as Burgers,
and conservation of $\omega$ implies conservation of a family of moduli of continuity $\omega_B(\xi) = \omega(B\xi).$ This nonlocal maximum principle is sufficient
for proving the bound \eqref{gradsqgest}, which allows to prove the global regularity.

Observe how sharp the balance is in the second range $\xi \geq \delta.$ The double logarithm growth for $\omega$ results in a weaker bound \eqref{gradsqgest} for the gradient
of the solution than in the Burgers case, but this slow rate of growth is necessary to make the estimates work. Indeed, the flow term
is quadratic in $\omega,$ while the dissipative term is linear. With estimate so tight, allowing any additional growth for $\omega$ would require new ideas.

We note that the global regularity of solutions to critical SQG equation was also proved by Caffarelli and Vasseur \cite{CV} using completely different
method, see also \cite{KN3} for a related third proof. They look at a class of weak solutions of passive drift-fractional diffusion equation, and they show that if the velocity is bounded in BMO uniformly in time, then $L^2$ initial data becomes $C^\sigma$ for some $\sigma >0$ for all $t>0.$ This is stronger than $L^\infty$ control of $\theta,$ and allows to prove global regularity
in the SQG case. For their method to work, it is important that the drift $u$ is divergence free (which does not play any role in the method presented here). The results of \cite{CV} are inspired
by DiGiorgi-type iterative techniques, and are local in nature (showing H\"older regularity in a space-time cylinder if one has control of $u$ in a larger cylinder).

\it The $\beta$-generalized SQG \cite{MX1,K4}. \rm For this model, $\alpha = 1-\beta$ is the critical dissipation exponent. We have
\begin{theorem}\label{betaqgthm}
The $\beta$-generalized critical SQG equation has unique global smooth solution if the initial data $\th_0 \in H^1.$
Moreover, if $\th_0$ belongs to the Sobolev space $W^{1,\infty},$ then
\begin{equation}\label{gradbetaest}
\|\nabla \th(x,t)\|_{L^\infty} \leq  \|\nabla \theta_0\|_{L^\infty} \exp(C\|\theta_0\|_{L^{\infty}})
\end{equation}
for all $t>0.$
\end{theorem}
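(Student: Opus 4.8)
The plan is to mirror the Burgers and SQG constructions: produce an explicit \emph{stationary} modulus of continuity $\omega$ that the evolution preserves, verify the time-independent form of \eqref{keyineq}, namely $\Omega_\omega(\xi)\,\partial_\xi\omega(\xi) + D_{1-\beta,\omega}(\xi) < 0$ for all $\xi>0$, and then convert the resulting uniform gradient control into global regularity by scaling. The first task is to pin down the flow modulus $\Omega_\omega$. Since $\hat u(k) = i k^\perp |k|^{-2\beta}\hat\theta(k)$, the symbol has the form $D(k)|k|^{-2\beta}$ with $D$ linear in $k$, so Lemma~\ref{flowlem} applies with its parameter equal to $2\beta$ (note $1 < 2\beta < 2$ precisely when $1/2<\beta<1$), yielding
\begin{equation*}
\Omega_\omega(\xi) = A\left(\int_0^\xi \frac{\omega(\eta)}{\eta^{2-2\beta}}\,d\eta + \xi\int_\xi^\infty \frac{\omega(\eta)}{\eta^{3-2\beta}}\,d\eta\right).
\end{equation*}

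The crucial structural observation, which distinguishes this case from SQG, is that for $\beta>1/2$ the operator $\nabla^\perp(-\Delta)^{-\beta}$ is smoothing of positive order $2\beta-1$. Consequently there is \emph{no} logarithmic loss: a short computation gives $\Omega_\omega(\xi) \sim C_\beta\,\xi^{2\beta}$ as $\xi\to 0$ and $\Omega_\omega(\xi)\sim C_\beta\,\omega(\xi)\,\xi^{2\beta-1}$ for large $\xi$ (for slowly growing $\omega$), with $C_\beta \sim \frac{1}{2\beta-1}+\frac{1}{2-2\beta}$ finite for each fixed $\beta\in(1/2,1)$ but blowing up as $\beta\downarrow 1/2$, reflecting the transition to the logarithmically singular SQG regime of Lemma~\ref{Omom}. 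Because of this, I expect a modulus growing only like a single logarithm at infinity to suffice, rather than the double logarithm \eqref{sqgmodcon} needed for SQG --- which is exactly why the bound \eqref{gradbetaest} is a single exponential rather than the double exponential \eqref{gradsqgest}.

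Accordingly I would take $\omega(\xi) = \xi - \xi^{3/2}$ on $0<\xi\le\delta$ (so that $\omega'(0)=1$ and $\omega''(0^+)=-\infty$, meeting the hypotheses of Theorem~\ref{mainthm}) and continue it for $\xi>\delta$ by $\omega'(\xi) = \gamma/\xi$, i.e.\ logarithmic growth, with $\delta,\gamma>0$ small parameters fixed at the end (chosen so that the left derivative dominates the right derivative at $\delta$, preserving concavity). The verification of the key inequality then splits as in the Burgers proof. For $0<\xi\le\delta$, the dissipation estimate of Lemma~\ref{disslem} together with the second-order Taylor bound gives $D_{1-\beta,\omega}(\xi)\lesssim c_\alpha\,\xi^{2\beta}\omega''(\xi) \sim -c\,\xi^{2\beta-1/2}$, which dominates the flow contribution $\Omega_\omega(\xi)\,\partial_\xi\omega(\xi)\sim C_\beta\,\xi^{2\beta}$ once $\delta$ is small, since $2\beta-\tfrac12<2\beta$. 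For $\xi>\delta$, concavity together with $\omega(2\xi)\le\tfrac32\omega(\xi)$ yields $D_{1-\beta,\omega}(\xi)\lesssim -c\,\omega(\xi)\,\xi^{2\beta-2}$, while the flow term equals $C_\beta\,\gamma\,\omega(\xi)\,\xi^{2\beta-2}$; both carry the same homogeneity $\xi^{2\beta-2}$ (up to the slow factor hidden in $\omega$), so the inequality holds precisely when $\gamma$ is chosen small enough.

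Finally, the critical $\beta$-generalized SQG equation is invariant under $\theta(x,t)\mapsto\theta(Bx, B^{2(1-\beta)}t)$, so preservation of $\omega$ propagates to the whole family $\omega_B(\xi)=\omega(B\xi)$. Given smooth $\theta_0$, since $\omega$ is unbounded we may choose $B$ so that $\theta_0$ obeys $\omega_B$; preservation then forces $\|\nabla\theta(\cdot,t)\|_{L^\infty}\le \omega_B'(0)=B$ for all $t$, and reading off the minimal admissible $B$ from the construction produces \eqref{gradbetaest}. This a priori bound extends the local smooth solution (whose existence for $\theta_0\in H^1$ is standard, as in Theorems~\ref{burreg11} and~\ref{SQGCCF}) globally in time. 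The main obstacle is the large-$\xi$ balance: the flow term is quadratic in $\omega$ while dissipation is only linear, and at the critical exponent both land on the same power $\xi^{2\beta-2}$, so global regularity rests entirely on the slow (logarithmic) growth of $\omega$ and on tracking the $\beta$-dependent constants carefully --- in particular verifying that $C_\beta$ stays finite for each fixed $\beta>1/2$, which is the quantitative content of the ``no logarithmic loss'' phenomenon.
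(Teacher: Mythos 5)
Your overall strategy (a stationary modulus of continuity fed into Theorem~\ref{mainthm}, Lemma~\ref{flowlem} applied with parameter $2\beta$, then the scaling $\theta\mapsto\theta(Bx,B^{2(1-\beta)}t)$ to extract \eqref{gradbetaest}) is the same as the paper's, and your large-$\xi$ analysis and single-exponential endgame are correct. But your small-$\xi$ verification contains a genuine error, and it breaks your choice of modulus on part of the range of $\beta$. The claim $\Omega_\omega(\xi)\sim C_\beta\,\xi^{2\beta}$ as $\xi\to0$ is false: in \eqref{ucon} (with the lemma's parameter equal to $2\beta$) the tail term satisfies
\[
\xi\int_\xi^\infty\frac{\omega(\eta)}{\eta^{3-2\beta}}\,d\eta\;\longrightarrow\;\xi\int_0^\infty\frac{\omega(\eta)}{\eta^{3-2\beta}}\,d\eta,
\]
and the last integral converges to a \emph{positive constant} (near zero the integrand is $\sim\eta^{2\beta-2}$ with $2\beta-2>-1$; at infinity $\omega$ grows only logarithmically and $3-2\beta>1$). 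Hence $\Omega_\omega(\xi)\geq c\,\xi$ for small $\xi$, with $c>0$ a nonlocal quantity depending on the whole modulus; since $2\beta>1$, this linear term dominates your $\xi^{2\beta}$. Meanwhile, your dissipation estimate uses only the local (Taylor) part of \eqref{dtermest}, which for $\omega=\xi-\xi^{3/2}$ gives $\xi^{2-2\alpha}\omega''(\xi)\sim-\xi^{2\beta-1/2}$. Comparing the \emph{true} flow term $c\,\xi$ with this bound, the inequality \eqref{keyineq} fails near $\xi=0$ whenever $2\beta-\tfrac12>1$, i.e.\ for every $\beta\in(3/4,1)$: for any fixed $\delta,\gamma>0$ one has $\xi^{2\beta-1/2}=o(\xi)$ as $\xi\to0$, so no choice of small parameters rescues the computation you wrote.

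This is precisely why the paper takes $\omega(\xi)=\xi-\xi^{1+\alpha}$ with $\alpha=1-\beta$ in \eqref{betasqgmodcon}, rather than recycling the SQG choice $\xi-\xi^{3/2}$ from \eqref{sqgmodcon}: then $\omega''(\xi)\sim-\xi^{\alpha-1}$ and the same local dissipation bound gives $\xi^{2-2\alpha}\omega''(\xi)\sim-\xi^{1-\alpha}=-\xi^{\beta}$, which dominates $c\,\xi$ on all of $(0,\delta]$ because $\beta<1$; one then closes the estimate by choosing $\delta$ and afterwards $\gamma$ small, exactly as in the Burgers and SQG cases. (Your modulus could in principle be salvaged by invoking the second, nonlocal integral in \eqref{dtermest}, which for $\xi\ll\delta$ contributes an additional term of order $-\alpha^{-1}\delta^{-2\alpha}\xi$ capable of beating the flow's linear coefficient; but that is a different argument from the one you gave, and the exponent $1+\alpha$ is the choice that lets the earlier computations carry over verbatim.) The remaining ingredients of your proof --- concavity at the junction point, the scaling invariance, and reading off the minimal admissible $B$ to obtain \eqref{gradbetaest} --- are sound.
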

\it Remark. \rm Notice that the estimate \eqref{gradbetaest} is stronger than that for SQG for any $\beta >1/2,$ and is similar to the Burgers equation.

In this case, one modulus of continuity that can be used is given by
\begin{eqnarray}\label{betasqgmodcon}
\omega(\xi) =  \xi - \xi^{1+\alpha}, & 0<\xi \leq \delta \\
\omega'(\xi) =  \frac{\gamma}{\xi}, & \xi >\delta. \nonumber
\end{eqnarray}

According to Lemma~\ref{flowlem}, the $\Omega_\omega(\xi)$ term appearing in \eqref{keyineq} can be set to
\[ \Omega_\beta(\xi) = A \left( \int_0^\xi \frac{\om(\eta)}{\eta^{2-\beta}}\,d\eta + \xi \int_\xi^\infty \frac{\om(\eta)}{\eta^{3-\beta}}\,d\eta \right). \]
We therefore have to check that
\[ \omega'(\xi)\Omega_\beta(\xi) + D(1-\beta,\xi) \leq 0 \]
for all $\xi >0.$ This can be shown similarly to the two previous cases that we considered; we leave it to the interested reader to finish the proof.
The complete argument will appear in \cite{K4}.

The critical $\beta$-generalized SQG can be handled by Caffarelli-Vasseur inspired methods as well. This has been done in \cite{CIWu2}.

\vspace*{0.5cm}
\setcounter{equation}{0}
\section{Rough initial data and time dependent modulus of continuity}\label{timeapp}

So far, in all examples that we considered it was possible to use a stationary modulus of continuity to derive a nonlocal maximum principle. In this section, we provide
an example of application where time dependence becomes important. This is an application on existence solutions to critical SQG equation with rough initial data. The subcritical case can
be handled as well (and is easier). Here we sketch the most important steps in the proof; complete presentation will appear elsewhere.
The results we describe can also be extended to other active scalars. The Burgers equation case was considered in \cite{KNS}.

Consider the equation
\begin{equation}\label{beq123}
\th_t=(u \cdot \nabla) \th-(-\Delta)^{1/2}\th,\, \,\, u = \nabla^\perp(-\Delta)^{-1/2}\th,\,\,\, \th(x,0)=\th_0(x).
\end{equation}
Then we have
\begin{theorem}\label{thmrough}
Let $\th_0\in L^p$ for some $p\in(1,\infty)$. Then there exists a solution $\th(x,t)$
of the equation \eqref{beq123} such that $\th$ is a smooth function for $t>0$,
\begin{equation}
\|\th(\cdot,t)-\th_0(\cdot)\|_{L^p}\to0\ \ \ \hbox{as}\ \ t\to0;
\end{equation}
\begin{equation}
t^{2/p}\|\th(\cdot,t)\|_{L^\infty}\leq C(\|\th_0\|_{L^p}),\ \ \
0<t\leq1;
\end{equation}
\begin{equation}
F(t)^{-1}\|\th(\cdot,t)\|_{W^1_\infty}\leq C(\|\th_0\|_{L^p}),\ \ \
0<t\leq1;
\end{equation}
Here $F$ is defined below in \eqref{deff}.
\end{theorem}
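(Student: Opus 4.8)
The plan is to realize $\th$ as a limit of smooth solutions with regularized data and to read off the three assertions as a priori bounds that are uniform in the regularization. Concretely, I would mollify $\th_0$ to smooth, mean-zero data $\th_0^{(n)}\to\th_0$ in $L^p$ and use Theorem~\ref{SQGCCF} to produce global smooth solutions $\th^{(n)}$ of \eqref{beq123}. Since $u=\nabla^\perp(-\Delta)^{-1/2}\th$ is divergence free, the $L^p$ norms are non-increasing, so $\|\th^{(n)}(\cdot,t)\|_{L^p}\le\|\th_0\|_{L^p}$ for all $t$ and $n$. Once the two smoothing bounds below are established uniformly in $n$, the interior bounds give enough compactness (for instance via Aubin--Lions) to extract a limit $\th$ solving \eqref{beq123}, and the first assertion $\|\th(\cdot,t)-\th_0\|_{L^p}\to0$ follows from the uniform $L^p$ control together with a stability estimate for \eqref{beq123} in a negative Sobolev norm, which yields continuity of the $L^p$-valued trajectory at $t=0$.

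The second assertion is the $L^p\to L^\infty$ smoothing and is the heart of the matter. The relevant scaling is exactly that of the fractional heat kernel: for $\alpha=1/2$ and $d=2$ one has $\Phi^{1/2}_{2,t}(x)=t^{-2}\Phi^{1/2}(x/t)$, so $\|e^{-t(-\Delta)^{1/2}}f\|_{L^\infty}\lesssim\|\Phi^{1/2}_{2,t}\|_{L^{p'}}\|f\|_{L^p}\sim t^{-2/p}\|f\|_{L^p}$, which is the origin of the factor $t^{2/p}$. The nonlinear term must be shown not to spoil this; here the divergence-free structure of $u$ is decisive, and I would obtain $\|\th^{(n)}(\cdot,t)\|_{L^\infty}\le C(\|\th_0\|_{L^p})\,t^{-2/p}$ for $0<t\le1$ either by a De~Giorgi-type iteration for the drift-diffusion equation or, in keeping with the methods of this paper, by propagating a modulus of continuity with a finite, time-dependent cap seeded by the above kernel bound. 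On the torus, mean-zero data keeps mean zero, so a cap on the oscillation translates directly into the stated $L^\infty$ bound.

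The third assertion upgrades the $L^\infty$ bound to a gradient bound by a genuinely time-dependent modulus of continuity, which is the point of this section. Recall from the proof of Theorem~\ref{SQGCCF} that the stationary critical profile $\om_*$ of \eqref{sqgmodcon} yields a whole preserved family $\om_B(\xi)=\om_*(B\xi)$, each enforcing $\|\nabla\th\|_{L^\infty}\le B$. To create smoothing I would let $B$ depend on time and decrease, setting
\[
\om(\xi,t)=\om_*\big(B(t)\,\xi\big),\qquad B(t)\downarrow\ \text{as}\ t\uparrow .
\]
For this to be preserved one needs \eqref{keyineq}, i.e. $\partial_t\om\ge\Omega_\om\partial_\xi\om+D_{1/2,\om}$; since $B'<0$ the left side is negative, so the inequality asks that $|\partial_t\om|$ not exceed the (negative) stationary budget $\Omega_\om\partial_\xi\om+D_{1/2,\om}$. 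This is precisely where a strengthened stationary inequality of the type \eqref{burmodconest} is used: the surplus it provides bounds the admissible rate $|B'|$, yielding a differential inequality for $B(t)$. Because that surplus grows superlinearly in $B$, the resulting ODE has a universal decaying envelope independent of the (large) starting value $B(0)=B_n$ needed to accommodate the smooth data $\th_0^{(n)}$; this ``forgetting'' of the initial roughness is what makes the bound uniform in $n$, and fixes $F(t)=B(t)$ in \eqref{deff}.

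The main obstacle is the simultaneous compatibility of the time-dependent modulus with all scales and with the regularization. The flow term for SQG is controlled only with a logarithmic loss (Lemma~\ref{Omom}) and is quadratic in $\om$, while the dissipation term (Lemma~\ref{disslem}) is linear; the balance is already razor-sharp in the stationary case, so $B(t)$ must be chosen so that the extra $\partial_t\om$ term has exactly the right size at every $\xi>0$, in both the small-$\xi$ (Lipschitz) and large-$\xi$ (cap) regimes, while still opening the modulus fast enough as $t\downarrow0$ that each smooth $\th_0^{(n)}$ obeys $\om(\cdot,0)$. Verifying \eqref{keyineq} under these competing demands, and checking that the envelope for $B(t)$ is genuinely independent of $n$, is the principal technical burden; the passage to the limit and the first assertion are then comparatively routine.
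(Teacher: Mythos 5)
Your overall architecture --- mollify the data, invoke the smooth theory of Theorem~\ref{SQGCCF}, prove $n$-uniform smoothing bounds, and pass to the limit --- is exactly the paper's, and your treatment of the third assertion essentially reconstructs the paper's Step II: a rescaled modulus $\omega(B(t)\xi)$ with $B$ decreasing, a strengthened stationary inequality of the type \eqref{burmodconest} converting the time-derivative term into an admissible decay rate $|B'|\leq B^2 G(t)$, and the resulting relation $(1/B)'=G$, which forgets $B(0)$ and produces the universal envelope $F$ of \eqref{deff}. (The paper runs this with the Burgers modulus \eqref{burmodcon} rather than your \eqref{sqgmodcon}; at the level of this sketch that difference is cosmetic.)

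The genuine gap is the second assertion, which you yourself call the heart of the matter but never prove: you offer the linear kernel estimate (which says nothing about the nonlinear equation), ``a De Giorgi-type iteration,'' or an unspecified capped-modulus argument. The paper's actual mechanism is elementary and different from all three. Evaluate the equation at a point $x_0$ where $\th^N$ attains its maximum $M_N$: there $\nabla\th^N=0$, so the transport term vanishes identically (divergence-freeness is irrelevant at this step; it is used only for the non-increase of $L^p$ norms), and $\th^N_t(x_0)=-(-\Delta)^{1/2}\th^N(x_0)=C\int(\th^N(y)-M_N)|x_0-y|^{-3}\,dy$. Chebyshev's inequality applied to the non-increasing $L^p$ norm bounds the measure of the set $\{|\th^N|\geq M_N/2\}$ by $C M_N^{-p}$, i.e.\ a disc of radius $\sim M_N^{-p/2}$; outside it the integrand is $\leq -\tfrac12 M_N|y|^{-3}$, and integration gives $\frac{d}{dt}M_N\leq -C_1 M_N^{1+p/2}+C_2M_N$, whose integration yields $t^{2/p}M_N(t)\leq C(\|\th_0\|_{L^p})$ uniformly in $n$, which is \eqref{linf}. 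None of your suggestions substitutes for this: the De Giorgi (Caffarelli--Vasseur) route is $L^2$-based, and for $1<p<2$ the data need not lie in $L^2$ of the torus, so the stated range of $p$ is not covered. Moreover the gap is not quarantined to assertion two. In the modulus argument, the rate bound $G(t)$ is the infimum of $\omega(x)/x$ taken only over the range where \eqref{keyineq} must hold, namely $\omega\leq 2\|\th(\cdot,t)\|_{L^\infty}\leq 2Ct^{-2/p}$; since $\omega(x)/x\to 0$ as $x\to\infty$, the infimum over \emph{all} $x$ is zero, so without the $L^\infty$ bound no strictly decreasing $B(t)$ is admissible and $F$ cannot even be defined. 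Thus your assertion three, as you set it up, stands entirely on the unproved assertion two.
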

\it Remarks. \rm 1. It can be shown in fact that the solution $\th(x,t)$ is real analytic for every $t>0$
(see \cite{KNS} for the Burgers case argument which can be extended to SQG in a straightforward way).


2. An interesting open question is
the uniqueness of the solution from Theorem~\ref{thmrough}. Due to
the highly singular nature of estimates as $t$ approaches zero,
the usual uniqueness argument based on some sort of Gronwall
inequality is problematic.

Let us look first at the
approximating equation
\begin{equation}\label{eq1}
\th_t^N=(u^N \cdot \nabla) \th^N-(-\Delta)^{1/2}\th^N,\, \,\, u^N = \nabla^\perp(-\Delta)^{-1/2}\th^N,\,\,\, \th^N(x,0)=\th^N_0(x)
\end{equation}
where $\th^N_0\in C^{\infty}$ and $\|\th_0^N-\th_0\|_{L^p}\to0$ as $N\to\infty$.
%
According to the results of the previous section, the solutions $\th^N(x,t)$ are smooth and global.
Recall also that all $L^p$ norms are non-increasing for smooth solutions of active scalar equations.
We divide our proof of regularity into three steps.

{\it Step I.} Here we prove uniform (in $N$) estimates for the
$L^\infty$ norm.
Put
$$
M_N(t):=\|\th^N(\cdot,t)\|_{L^\infty}.
$$
Fix $t\geq0$. Consider any point $x_0$ where $|\th^N(x_0,t)|=M_N$.
Without loss of generality, we may assume that $x_0=0$ and
$\th^N(0,t)=M_N.$ Then
\begin{equation}\label{equat}
\th^N_t(0,t)=(-(-\Delta)^{1/2}\th^N)(0,t)= C\int\limits_{-\infty}^{\infty}\frac{\th^N(y,t)-M_N}{|y|^{3}}dy,
\end{equation}
with a certain constant $C>0.$
Denote Lebesgue measure of a measurable set $S$ by $m(S).$ Since
the $L^p$ norms of solutions are non-increasing, we have
$$
\|\th^N\|_{L^p}^p\leq C.
$$
Then we obtain that \[ m\left.\left(x \right| |\th^N(x,t)| \geq M_N/2
\right) \leq C2^p M_N^{-p}.\] Therefore the right hand side of
\eqref{equat} does not exceed
$$
-CM_N\int\limits_{L\geq|y|\geq C2^{(p-1)/2}}/M_N^{p/2}|y|^{-2}d|y|.
$$
Here $2L$ is the period. Then
\begin{equation}\label{infin1}
\th^N_t(0,t)< -C_1 M_N^{p/2+1}+C_2 M_N.
\end{equation}
The same bound holds for any point $x_0$ where $M_N$ is attained
and by continuity in some neighborhoods of such points. So, we
have \eqref{infin1} in some open set $U_N$. Due to smoothness of
the approximating solution, away from $U_N$ we have
$$
\max\limits_{x\not\in U_N}|\th^N(x,\tau)|< M_N(\tau)
$$
for every $\tau$ during some period of time $[t,t+\tau_N],$
$\tau_N>0.$ Thus we obtain that
\begin{equation}\label{infin}
\frac{d}{dt}M_N< -C_1 M_N^{p/2+1}+C_2 M_N.
\end{equation}
Solving equation \eqref{infin}, we get the uniform estimate
$$
M_N^{p/2}(t)\leq
\frac{e^{C_2pt}}{M_N^{-p/2}(0)+\frac{C_1}{C_2}(e^{C_2pt}-1)}\leq
\frac{C_2}{C_1(1-e^{-C_2pt})}.
$$
In particular,
\begin{equation}\label{linf}
t^{2/p}\|\th^N\|_{L^\infty}\leq C,\ \ \ t\leq1.
\end{equation}

{\it Step II.} Here we obtain uniform in $N$ estimates on the
approximations $\th^N$ that will imply smoothness of the solution.

Clearly, it is sufficient to work with $t \leq 1.$ Let us define
\[ G(t) = {\rm inf}_{0 \leq \omega(x) \leq Ct^{-2/p}} \frac{\omega(x)}{x},
\]
where $C$ is as in \eqref{linf} and $\omega$ is as in \eqref{burmodcon}. Observe that, since $\omega$ is
concave and increasing, the function $G(t)$ is equal to
$Ct^{-2/p}/\omega^{-1}(Ct^{-2/p}).$ Define also
\begin{equation}\label{deff}
F(t) =\left( \int\limits_0^t G(s)\,ds \right)^{-1}.
\end{equation}

Observe that $F(t) \rightarrow \infty$ as $t \rightarrow 0,$ and therefore every $\theta^N(x,t)$ has $\omega_{F(t)}$
for all $t$ small enough (how small may depend on $N$). We claim that $\omega_{F(t)}(\xi) \equiv \omega(F(t)\xi)$ satisfies \eqref{keyineq}.
By Theorem~\ref{mainthm}, this would imply that the solutions $\th^N(x,t)$ have $\omega_{F(t)}$ for all $t>0,$ providing a bound uniform in $N.$
To check \eqref{keyineq}, observe that all the estimates that led to \eqref{burmodconest} when we were proving conservation of $\omega$ for the critical
Burgers equation apply to $\omega_{F(t)}$ in an identical fashion: all terms that enter have the same scaling in $\xi$. In particular,
\[ 2\omega_{F(t)}(\xi) \partial_\xi (\omega_{F(t)}(\xi)) + D_{1/2, \omega_{F(t)}}(\xi) < 0. \]
This and inequality \eqref{linf} imply that to check applicability of Theorem~\ref{mainthm}, we need to verify that
\begin{equation}\label{est1234} -\omega_{F(t)}(\xi) F(t) \partial_\xi \omega_{F(t)}(\xi) \leq F'(t) \xi \partial_\xi \omega_{F(t)}(\xi) \end{equation}
for all $\xi,t>0$ such that
\begin{equation}\label{linfcon11}
 \omega(F(t)\xi) \leq 2\|\theta(x,t)\|_{L^\infty} \leq
2Ct_1^{-1/p}. \end{equation}
This condition reduces to checking
\begin{equation}\label{rr12}
-\frac{F'(t)}{F(t)^2} \leq
\frac{\omega(F(t)\xi)}{F(t)\xi}
\end{equation}
provided \eqref{linfcon11} holds.
Using the definition of the function $G(t),$ we
obtain the estimate
\[ \frac{\omega(F(t)\xi)}{F(t)\xi} \geq G(t). \]
Thus \eqref{rr12} is satisfied if
\[ \left(\frac1F \right)' \leq G(t), \]
which is correct by definition of $F,$ completing the proof of \eqref{keyineq}.

Since $N$ was arbitrary, it follows that $\th(x,t)$ has the
modulus of continuity $\omega_{F(t)}$ for any $t>0,$ and thus
\begin{equation}
\label{last} F(t)\|\th^N\|_{W^1_\infty}\leq C,\ \ \ t\leq1.
\end{equation}
The higher regularity of the solution can now be shown by standard methods similar to the ones applied
to prove the local existence of the solution for regular initial data; we refer to \cite{KNS} for the details. The estimates one obtains
look like
\begin{equation}\label{last1}
F_n(t)\|\th^N(\cdot,t)\|_{1+\frac{n}{2}}\leq C_n,\ \ \ n\geq1,\ \ \ t\leq1,
\end{equation}
with some functions $F_n$ which can be calculated inductively.
Now, we can choose a subsequence $N_j$  such that $\th^{N_j}\to \th$ as
$N_j\to\infty$ and function $\th$ satisfies differential equation
\eqref{beq123} as well as the bounds \eqref{linf}, \eqref{last},
\eqref{last1} on $(0,1]$. By our earlier results this solution can be extended globally.

{\it Step III.} Proving that the function $\th$ can be chosen
to satisfy the initial condition.

The argument here is fairly standard, if somewhat technical. 
We refer to \cite{KNS}, Section 5, for a similar argument given in a complete detail.

\vspace*{0.5cm}
\setcounter{equation}{0}
\section{Finite time blow up: supercritical Burgers equation}\label{bubu}

The Burgers equation is the only active scalar for which there is complete understanding of the issue
of existence of smooth global solutions depending on the strength of the dissipation $\alpha.$
Theorem~\ref{burreg11} shows that smooth global solutions exist if $\alpha \geq 1/2.$ The following result
completes the picture.

\begin{theorem}\label{burbu}
Assume that $0<\alpha<1/2.$ Then there exists smooth periodic
initial data $\th_0(x)$ such that the solution $\th(x,t)$ of
\eqref{bur1} blows up in $H^s$ for each $s>\frac32 -2\alpha$ in finite time.
\end{theorem}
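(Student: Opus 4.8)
The plan is to show that the Burgers nonlinearity steepens gradients faster than the fractional dissipation $(-\Delta)^\alpha$ can smooth them once $\alpha<1/2$, and to read off the threshold $s=\tfrac32-2\alpha$ from scaling. First I would record the scaling symmetry of \eqref{bur1}: if $\th$ solves \eqref{bur1}, then so does $\th_\lambda(x,t)=\lambda^{2\alpha-1}\th(\lambda x,\lambda^{2\alpha}t)$, and a direct computation gives $\|\th_\lambda(\cdot,0)\|_{\dot H^s}=\lambda^{\,s-(3/2-2\alpha)}\|\th(\cdot,0)\|_{\dot H^s}$. Hence $s_c:=\tfrac32-2\alpha$ is the scaling-critical Sobolev exponent, and $s_c>\tfrac12$ holds exactly when $\alpha<\tfrac12$; that is, the scale-invariant regularity lies strictly above the $H^{1/2}$ threshold that governs the critical case of Theorem~\ref{burreg11}. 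This is the quantitative sense in which the nonlinearity is supercritical, and it is why blow up is expected precisely for $s>s_c$.

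Next I would construct the initial data $\th_0$ to be smooth, periodic, \emph{odd}, and very steep and approximately linear near the origin, say $\th_0(x)\approx f_0 x$ for $|x|\le\ell_0$ with $f_0$ large and $\|\th_0\|_{L^\infty}=M=O(1)$, so that $f_0\ell_0\lesssim M$. Oddness is preserved by \eqref{bur1}, since both $\th\th_x$ and $(-\Delta)^\alpha$ respect it; therefore $\th(0,t)=0$ and $\th_{xx}(0,t)=0$ for all $t$, and the origin stays a clean steepening point with no drift contribution. Setting $f(t)=\th_x(0,t)$ and differentiating \eqref{bur1} at $x=0$, the terms $\th\,\th_{xx}$ drop out and one obtains the exact relation $\dot f=f^2-(-\Delta)^\alpha\th_x(0,t)$. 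The quadratic term is the engine of blow up, and everything hinges on bounding the dissipative term $D(t):=(-\Delta)^\alpha\th_x(0,t)$ from above.

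Using oddness and one integration by parts one writes $D(t)=c_\alpha\int_0^\infty \frac{f(t)\,y-\th(y,t)}{y^{2+2\alpha}}\,dy$. Splitting at the linear scale $\ell(t)\sim M/f(t)$, the outer part is controlled by $\|\th\|_{L^\infty}$ and $f$, of size $\lesssim f\ell^{-2\alpha}+M\ell^{-1-2\alpha}\sim M^{-2\alpha}f^{\,1+2\alpha}$, while the inner part $\int_0^{\ell}\frac{fy-\th}{y^{2+2\alpha}}\,dy$ vanishes initially by the linear choice. The crucial point is that $1+2\alpha<2$ exactly when $\alpha<\tfrac12$, so for large $f$ the outer bound is dominated by $f^2$ and $\dot f\ge f^2-CM^{-2\alpha}f^{\,1+2\alpha}\ge\tfrac12 f^2$, forcing $f(t)\to\infty$ at some $T^*\le 2/f_0<\infty$. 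I expect the main obstacle to be exactly controlling $D(t)$, because the nonlocal operator sees the whole solution: one must show the inner term stays small, i.e. that the near-linearity of $\th$ on $|x|\le\ell(t)$ degrades slowly. The plan is a short-time bootstrap, keeping $D(t)\le\tfrac12 f^2$ throughout the (short) life span; since $f_0$ is large, the blow-up time is shorter than the time the steep profile needs to spread, which is what makes the argument close, the competition again being governed by $f\gg \ell^{-2\alpha}$, i.e. supercriticality.

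Finally I would upgrade the gradient singularity to the stated $H^s$ blow up. The quantitative rate $f(t)\sim(T^*-t)^{-1}$ together with the self-similar spatial scale $\ell(t)\sim(T^*-t)^{1/(2\alpha)}$ reproduces the scaling above, and a direct estimate of the emerging profile gives $\|\th(\cdot,t)\|_{\dot H^s}\to\infty$ as $t\to T^*$ precisely when $s>\tfrac32-2\alpha$, which explains why $s_c$ is the threshold. For the sharp range $s_c<s<\tfrac32$, where Sobolev embedding alone does not control $\th_x$, I would prefer the cleaner packaging via the local theory: by sharp local well-posedness of \eqref{bur1} in $H^s$ for $s>s_c$ together with instantaneous smoothing for $t>0$, if $\|\th(\cdot,t)\|_{H^s}$ were to stay bounded up to $T^*$ for some $s>s_c$, one could continue an $H^s$ solution past $T^*$ and bootstrap it to a smooth solution, contradicting the gradient singularity at $T^*$. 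Hence $\|\th(\cdot,t)\|_{H^s}\to\infty$ as $t\to T^*$ for every $s>\tfrac32-2\alpha$.
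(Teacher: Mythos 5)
Your scaling computation, the choice of odd steep data with the zero pinned at the origin, the exact Riccati relation $\dot f=f^2-(-\Delta)^\alpha\th_x(0,t)$ for $f(t)=\th_x(0,t)$, and the final upgrade from a gradient singularity to $H^s$ blow up for every $s>\frac32-2\alpha$ via local well-posedness are all correct; the last step is essentially the paper's own closing remark (blow up in $C^3$ plus the regularity criterion of \cite{KNS} forces blow up in each $H^s$, $s>\frac32-2\alpha$). The genuine gap is at the crux of the argument: the bound $D(t)\le\frac12 f(t)^2$ is never established, only announced as ``a short-time bootstrap.'' The outer part of $D(t)$ is indeed harmless for $\alpha<1/2$, as you say, but the inner part $\int_0^{\ell}\bigl(f(t)y-\th(y,t)\bigr)y^{-2-2\alpha}\,dy$ requires a quantitative, time-propagated statement that $\th(\cdot,t)$ deviates from the line $f(t)y$ by $o(y^{1+2\alpha})$ as $y\to0$, uniformly up to the putative blow-up time. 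A relative-error bound $|\th(y,t)-f(t)y|\le\epsilon f(t)y$ is useless here, since $\int_0^\ell y^{-1-2\alpha}\,dy$ diverges; and a Taylor bound $|\th(y,t)-f(t)y|\lesssim\|\th(t)\|_{C^3}\,y^3$ is circular: optimizing the splitting scale then yields $D\lesssim N^\alpha f^{1-\alpha}$ with $N=\sup_t\|\th(t)\|_{C^3}$, so the Riccati inequality only starts once $f_0\gtrsim N^{\alpha/(1+\alpha)}$ --- a condition you cannot arrange, because $N$ is precisely the uncontrolled quantity in the contradiction framework. No mechanism is offered for propagating this higher-order flatness of $\th-f(t)y$, and that propagation is not a routine step; it is the entire difficulty of the supercritical problem.

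The paper closes exactly this gap by different means: it never tracks $\th_x(0,t)$ or pointwise Taylor remainders. It runs a time-splitting scheme (Proposition~\ref{timesplit}) and propagates a piecewise-linear lower barrier $\varphi(\kappa,H,a,x)$: the inviscid Burgers step steepens the barrier (Lemma~\ref{burstep}), while the dissipation step degrades it only by factors $1-C(\alpha)h\kappa^{2\alpha}H^{-2\alpha}$ (Lemma~\ref{disstep}) --- an estimate performed on the barrier itself, using positivity and monotonicity of the fractional heat kernel and the $L^\infty$ bound, with no reference to high norms of the solution. The induced finite-difference system (Corollary~\ref{diffsys}) is then compared with the ODE system \eqref{diffeqsys}, whose monotone quantity $H^{2\alpha}\kappa^{1-2\alpha}$ (Lemma~\ref{conlaw}, where $\alpha<1/2$ enters) yields $\kappa'\ge\frac23\kappa^2$ and a slope exceeding $2N$ before a time $T(\alpha)$ fixed independently of $N$; the assumed $C^3$ bound enters only through $O(h^2)$ splitting errors and the final contradiction. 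If you want to salvage an argument in the spirit of yours, the viable route is to replace the pointwise slope by an integrated functional, e.g.\ a weighted integral of $\th$ satisfying $J'\ge cJ^2-C$: this is the approach of \cite{LD} cited in the paper, and it is the same device the paper uses for the CCF model in Section~\ref{buccf}, precisely because integration against a singular weight bypasses the pointwise profile control that your plan leaves open.
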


Of course, the reason one can prove finite time blow up for the fractional dissipative Burgers equation is
that the conservative Burgers equation is very well understood (and is well known to form shocks in finite time).
Let us sketch the proof of Theorem~\ref{burbu}, omitting some technical details. A complete presentation
can be found in \cite{KNS}. The sketch we provide gives rather detailed information about the blow up, tracing
carefully the steepening slope in a forming shock. A simpler proof of finite time blow up based on integral inequalities (somewhat similar in spirit
to the proof we will see for the CCF model in Section~\ref{buccf}) was given in \cite{LD}.

At first, we
are going to produce smooth initial data $\th_0(x)$ which leads to
blow up in finite time in the case where the period $2L$ is large.
After that, we will sketch a simple rescaling argument which gives
the blow up for any (and in particular unit) period.

The proof will be by contradiction. We will fix $L$ and the
initial data, and assume that by time $T=T(\alpha)$ the blow up
does not happen. In particular, this implies that there exists $N$
such that $\|\th(x,t)\|_{C^3} \leq N$ for $0 \leq t \leq T.$ This
will lead to a contradiction. The overall plan of the proof is to
reduce the blow up question for front-like data to the study of a
system of finite difference equations on the properly measured
steepness and size of the solution. To control the solution, the
first tool we need is a time splitting approximation. Namely,
consider a time step $h,$ and let $w(x,t)$ solve
\begin{equation}\label{weq}
w_t = ww_x, \,\,\,w(x,0)=\th_0(x),
\end{equation}
while $v(x,t)$ solves
\begin{equation}\label{veq}
v_t = -(-\Delta)^{\alpha}v, \,\,\,v(x,0)=w(x,h).
\end{equation}
The idea of approximating $\th(x,t)$ with time splitting is fairly
common and goes back to the Trotter formula in the linear case
(see for example \cite{BM}, page 120, and \cite{T3}, page 307, for
some applications of time splitting in nonlinear setting). The
situation in the Burgers case is not completely standard, since the
Burgers equation generally does blow up, and moreover the control
we require is in a rather strong norm.

The solution of the problem \eqref{veq} with the initial data $v_0(x)$
is given by the convolution
\begin{equation}\label{disssol} v(x,t)
= \int_\Rm \Phi_t(x-y) v_0(y)\,dy\ (=e^{-(-\Delta)^\alpha t}v_0(x)),
\end{equation}
Recall that
\begin{equation}\label{Phi1}
\Phi_t(x) = t^{-1/2\alpha} \Phi(t^{-1/2\alpha}x), \,\,\,\Phi(x) =
\frac{1}{2\pi}\int_\Rm \exp (ix\xi-|\xi|^{2\alpha}) \,d\xi,
\end{equation}
and that $\Phi(x)$ is even and $\int \Phi(x)\,dx=1.$  Also,
\begin{equation}\label{Phi}
\Phi(x) >0;\,\,\,x\Phi'(x) \leq 0,\,\,\,\Phi(x) \leq
\frac{K(\alpha)}{1+|x|^{1+2\alpha}},\,\,\,\left|\Phi'(x)\right|
\leq \frac{K(\alpha)}{1+|x|^{2+2\alpha}}.
\end{equation}
These properties are not difficult to prove; see e.g.
\cite{Feller} for some results, in particular positivity (Theorem
XIII.6.1). 
We need the following lemma.

\begin{lemma}
For every $f\in C^{n+1},\ \ n\geq0$,
\begin{equation}\label{expcon}
\|(e^{-(-\Delta)^\alpha t} -1)f\|_{C^n} \leq C(\alpha) t
\|f\|_{C^{n+1}}.
\end{equation}
\end{lemma}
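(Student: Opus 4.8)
The plan is to prove the estimate (\ref{expcon}) directly from the integral representation (\ref{disssol}), exploiting the fact that $\Phi$ is a fixed probability density with the decay stated in (\ref{Phi}). First I would write the operator as an average against the kernel and subtract the identity using the normalization $\int \Phi(x)\,dx =1$:
\begin{equation}
(e^{-(-\Delta)^\alpha t}-1)f(x) = \int_\Rm \Phi_t(y)\bigl(f(x-y)-f(x)\bigr)\,dy
= \int_\Rm \Phi(z)\bigl(f(x-t^{1/2\alpha}z)-f(x)\bigr)\,dz,
\end{equation}
where in the second equality I changed variables $y = t^{1/2\alpha}z$ and used the scaling (\ref{Phi1}). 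The point of this rescaling is that it isolates the single factor of $t^{1/2\alpha}$ that comes from the diffusion length, and reduces everything to estimating an increment of $f$ against the \emph{fixed} density $\Phi$.

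Next I would estimate the increment $f(x-t^{1/2\alpha}z)-f(x)$. The naive bound $|f(x-t^{1/2\alpha}z)-f(x)| \leq t^{1/2\alpha}|z|\,\|f'\|_{L^\infty}$ is \emph{not} enough: the tail $\Phi(z)\sim |z|^{-1-2\alpha}$ makes $\int \Phi(z)|z|\,dz$ divergent (since $0<\alpha<1/2$), so one cannot simply pull out $\|f'\|_\infty$. The correct device is to split the integral at $|z|=1$ (equivalently $|y| = t^{1/2\alpha}$). On the inner region $|z|\leq 1$ I use the mean value theorem to get a factor $t^{1/2\alpha}|z|\,\|f'\|_\infty$, and $\int_{|z|\leq1}\Phi(z)|z|\,dz$ is finite; on the outer region $|z|>1$ I simply bound the increment by $2\|f\|_\infty \leq 2\|f\|_\infty$ and use $\int_{|z|>1}\Phi(z)\,dz <\infty$, which contributes a term of order $\|f\|_\infty$ times the outer mass. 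For the $C^0$ case ($n=0$) this already gives the bound, but one must be slightly careful that the outer contribution carries the right power of $t$; in fact for $t\leq 1$ the inner term of size $t^{1/2\alpha}$ dominates and, since $1/2\alpha>1$ is not automatic, I would instead keep the sharper accounting and observe that the combination is bounded by $C(\alpha)\,t\,\|f\|_{C^1}$ precisely because $1/2\alpha\geq 1$ fails only when $\alpha>1/2$, which is outside our range --- so $t^{1/2\alpha}\leq t$ for $t\leq1$ when $\alpha\leq1/2$. This is the one place needing care, and I return to it below.

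For the general $C^n$ estimate I would differentiate under the integral sign. Since $e^{-(-\Delta)^\alpha t}$ is a Fourier multiplier it commutes with $\partial_x$, so $\partial_x^k (e^{-(-\Delta)^\alpha t}-1)f = (e^{-(-\Delta)^\alpha t}-1)\partial_x^k f$ for every $k\leq n$. Applying the $n=0$ estimate already proved to $\partial_x^k f$ gives
\begin{equation}
\|\partial_x^k(e^{-(-\Delta)^\alpha t}-1)f\|_{L^\infty} \leq C(\alpha)\,t\,\|\partial_x^k f\|_{C^1} \leq C(\alpha)\,t\,\|f\|_{C^{n+1}},
\end{equation}
and taking the maximum over $0\leq k\leq n$ yields (\ref{expcon}). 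Thus the whole lemma reduces to the scalar $C^0$ estimate, and the multiplier structure handles the higher derivatives for free.

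I expect the main obstacle to be exactly the bookkeeping of the power of $t$ in the heavy-tailed regime. Because $\Phi$ decays only like $|z|^{-1-2\alpha}$ with $2\alpha<1$, the first absolute moment $\int\Phi(z)|z|\,dz$ diverges, so a single global Lipschitz estimate is impossible and the inner/outer split is forced; one must verify that after the split the outer remainder, controlled by $\|f\|_\infty$ and the tail mass outside the diffusion scale, combines with the inner Lipschitz term to give a clean linear-in-$t$ bound for $t\leq1$. The fact that we work with $C^{n+1}$ control (rather than merely $C^n$) and restrict to $t\leq 1$ is precisely what makes this possible, since then $t^{1/2\alpha}\leq t$ and the extra derivative absorbs the scale factor. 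Everything else is routine once the scaling change of variables and the commutation with $\partial_x$ are in place.
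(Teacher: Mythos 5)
Your reduction of the general case to $n=0$, by commuting $\partial_x^k$ with the Fourier multiplier $e^{-(-\Delta)^\alpha t}$, is correct, and so is your inner-region estimate (including the observation that $t^{1/(2\alpha)}\le t$ for $t\le 1$ when $\alpha\le 1/2$). The genuine gap is in the outer region, and it breaks the argument as written: you split at the diffusion scale $|z|=1$ (i.e.\ $|y|=t^{1/(2\alpha)}$) and bound the outer contribution by $2\|f\|_{L^\infty}\int_{|z|>1}\Phi(z)\,dz$. But $\int_{|z|>1}\Phi(z)\,dz$ is a fixed positive constant \emph{independent of $t$}, so this term is of size $\|f\|_{L^\infty}$ uniformly in $t$, not of size $t$; as $t\to0$ it cannot be absorbed into $C(\alpha)\,t\,\|f\|_{C^1}$. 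You do flag this as ``the one place needing care,'' but the resolution you then give ($t^{1/(2\alpha)}\le t$) concerns only the inner term and never repairs the outer one. No bookkeeping can save a cutoff at the diffusion scale, because with that cutoff the tail mass simply carries no power of $t$ at all.

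The repair is to split at a \emph{fixed} spatial scale, say $|y|=1$, i.e.\ $|z|=t^{-1/(2\alpha)}$, and to use the quantitative decay in \eqref{Phi} rather than mere integrability of the tail. Then the outer mass obeys $\int_{|z|>t^{-1/(2\alpha)}}\Phi(z)\,dz\le K(\alpha)\int_{|z|>t^{-1/(2\alpha)}}|z|^{-1-2\alpha}\,dz=C(\alpha)\,t$, while the truncated first moment obeys $\int_{|z|\le t^{-1/(2\alpha)}}|z|\Phi(z)\,dz\le C(\alpha)\,t^{-(1-2\alpha)/(2\alpha)}$ for $t\le1$, so the inner Lipschitz term is at most $C(\alpha)\|f'\|_{L^\infty}\,t^{1/(2\alpha)}\,t^{-(1-2\alpha)/(2\alpha)}=C(\alpha)\|f'\|_{L^\infty}\,t$. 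Both pieces are then linear in $t$ for $t\le1$; the case $t\ge1$ (which the lemma covers but your restriction to $t\le1$ silently drops) is trivial, since the semigroup is an $L^\infty$ contraction and hence $\|(e^{-(-\Delta)^\alpha t}-1)f\|_{C^n}\le2\|f\|_{C^n}\le2t\|f\|_{C^{n+1}}$. Alternatively, and more cleanly, write $(e^{-(-\Delta)^\alpha t}-1)f=-\int_0^t e^{-(-\Delta)^\alpha s}(-\Delta)^\alpha f\,ds$, note that $\|(-\Delta)^\alpha f\|_{L^\infty}\le C(\alpha)\|f\|_{C^1}$ for $0<\alpha<1/2$ (proved by exactly the fixed-scale split above applied to the singular-integral representation of $(-\Delta)^\alpha$), and use the contraction property of the semigroup; this yields \eqref{expcon} for all $t>0$ in one line. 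For reference, the paper itself gives no proof here --- it defers to \cite{KNS}, calling the estimates direct and simple --- so the standard your write-up must meet is a self-contained correct argument, which it is not yet.
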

The proof of this result can be obtained by direct and simple estimates, see
\cite{KNS}.

The next lemma provides local solvability for our splitting system.
\begin{lemma}\label{localforh}
Assume $\|\th_0(x)\|_{C^3} \leq N.$ Then for all $h$ small enough,
$v(x,h)$ is $C^3$ and is uniquely defined by \eqref{weq},
\eqref{veq}. Moreover, it suffices to assume $h \leq CN^{-1}$ to
ensure
\begin{equation}\label{wvcon}
\|w(x,t)\|_{C^3}, \|v(x,t)\|_{C^3} \leq 2N
\end{equation}
for $0 \leq t \leq h.$
\end{lemma}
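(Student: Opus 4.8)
The plan is to decouple the two half-steps, treating the inviscid Burgers transport \eqref{weq} and the fractional heat flow \eqref{veq} by entirely different means, and then chaining the bounds. First I would solve \eqref{weq} by characteristics: writing it as $w_t-ww_x=0$, the characteristics $X(t;a)=a-\th_0(a)t$ carry the constant value $w(X(t;a),t)=\th_0(a)$, so that $\|w(\cdot,t)\|_{L^\infty}=\|\th_0\|_{L^\infty}\le N$ for as long as the flow stays smooth. The Jacobian is $\partial_a X=1-\th_0'(a)t\ge 1-Nt$, which is bounded below by $1-c>0$ once $t\le h\le cN^{-1}$ with $c<1$. Hence the characteristic map is a diffeomorphism of the same regularity as $\th_0$, characteristics do not cross, and the classical $C^3$ solution exists and is unique on $[0,h]$.

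To obtain the quantitative $C^3$ bound I would differentiate \eqref{weq} and read off the transport ODEs satisfied by the derivatives along characteristics, writing $\tfrac{D}{Dt}=\partial_t-w\partial_x$:
\[ \tfrac{D}{Dt}w_x=w_x^2,\qquad \tfrac{D}{Dt}w_{xx}=3\,w_xw_{xx},\qquad \tfrac{D}{Dt}w_{xxx}=4\,w_xw_{xxx}+3\,w_{xx}^2. \]
The first integrates explicitly to $w_x=\th_0'/(1-\th_0't)$, giving $\|w_x\|_{L^\infty}\le N/(1-c)\le 2N$ as soon as $c\le 1/2$. Feeding this into the second equation (a linear, Gronwall-type bound) yields $\|w_{xx}\|_{L^\infty}\le N e^{6Nt}\le Ne^{6c}$, and then the third equation, whose source $3w_{xx}^2$ is now controlled, gives $\|w_{xxx}\|_{L^\infty}\le N\,e^{O(c)}$ after integrating over $t\le cN^{-1}$. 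Choosing the absolute constant $c$ small enough makes each exponential factor at most $2$, so $\|w(\cdot,t)\|_{C^3}\le 2N$ throughout $[0,h]$.

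For the dissipative step I would exploit that \eqref{veq} is linear with solution \eqref{disssol}, namely $v(\cdot,t)=\Phi_t*w(\cdot,h)$. Since $\Phi_t\ge 0$ and $\int\Phi_t=1$ by \eqref{Phi1}--\eqref{Phi}, convolution with $\Phi_t$ is an $L^\infty$ contraction, and because it commutes with $\partial_x^k$ we have $\partial_x^k v(\cdot,t)=\Phi_t*\partial_x^k w(\cdot,h)$, whence $\|v(\cdot,t)\|_{C^3}\le\|w(\cdot,h)\|_{C^3}\le 2N$ for every $t\ge 0$. In particular $v(\cdot,h)$ lies in $C^3$ (indeed it is smooth for $t>0$) and is unique as the solution of a linear problem with prescribed data, establishing \eqref{wvcon} with the same threshold $h\le cN^{-1}$.

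I do not anticipate a real obstacle, as this is a short-time existence estimate; the only delicate point is the third-derivative bound in the transport step. There the coefficients of the derivative ODEs grow like $w_x$ and a quadratic source $w_{xx}^2$ appears, so one must verify that over the window $t\le cN^{-1}$ these produce only factors $e^{O(c)}$, which is what forces $c$ to be a genuine absolute constant rather than the $c<1/2$ that merely rules out shock formation. The dissipative half-step contributes nothing to the growth precisely because $\Phi_t$ is a nonnegative, mass-one kernel commuting with differentiation, so it cannot enlarge any $C^k$ norm.
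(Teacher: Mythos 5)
Your proposal is correct and takes essentially the same route as the paper: solve the transport step explicitly by characteristics and bound $\|w(\cdot,t)\|_{C^3}$ by direct computation (the paper differentiates the implicit solution formula $y=x-\th_0(x)t$ via the implicit function theorem, while you integrate the Riccati/Gronwall ODEs for $w_x,w_{xx},w_{xxx}$ along characteristics --- an equivalent calculation), then treat the dissipative step as convolution with the nonnegative, mass-one kernel $\Phi_t$, which commutes with $\partial_x$ and hence cannot increase any $C^k$ norm. No gaps.
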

\begin{proof}
Using the characteristics one can explicitly solve equation
\eqref{weq}. We have $w(t,y)=\th_0(x)$, where $x=x(y)$ is such that
\begin{equation}
y=x-\th_0(x)t.
\end{equation}
Now, implicit function theorem and direct computations show that
$\|w(t,\cdot)\|_{C^3}\leq 2\|\th_0\|_{C^3}$ provided that
$\|\th_0\|_{C^3}t\leq c$ for some small constant $c>0$. This proves
the statement of the Lemma for $w$. To prove it for $v$ we just
notice that $v$ is a convolution of the $w(h,x)$ with $\Phi_t(x)$.
Since $\|\Phi_t\|_{L^1}=1$ we obtain that
$\|v(t,\cdot)\|_{C^3}\leq\|w(h,\cdot)\|_{C^3}.$
\end{proof}

The main time
splitting result we require is the following
\begin{proposition}\label{timesplit}
Assume that 
$\|\th_0(x)\|_{C^3} \leq N$ for $0 \leq t \leq T.$ Define $v(x,t)$
by \eqref{weq}, \eqref{veq} with time step $h.$ Then for all $h$
small enough, we have
\[ \|\th(x,h)-v(x,h)\|_{C^1} \leq
C(\alpha,N)h^2.\]
\end{proposition}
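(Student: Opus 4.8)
The plan is to control the time-splitting error over a single step of size $h$ by comparing the true solution $\th(x,h)$ with the split solution $v(x,h)$ through a Duhamel-type representation. First I would write both evolutions with the dissipative semigroup factored out. For the true solution, Duhamel's formula gives
\[
\th(x,h) = e^{-(-\Delta)^\alpha h}\th_0(x) + \int_0^h e^{-(-\Delta)^\alpha (h-s)}\big(\th(x,s)\th_x(x,s)\big)\,ds,
\]
while the split solution is $v(x,h) = e^{-(-\Delta)^\alpha h} w(x,h)$, where $w$ solves the inviscid Burgers equation \eqref{weq}. Using $w(x,h)=\th_0(x)+\int_0^h w(x,s)w_x(x,s)\,ds$ and the fact that the dissipation semigroup is a contraction on $C^1$ (since $\|\Phi_t\|_{L^1}=1$), I would reduce the difference $\th(x,h)-v(x,h)$ to two pieces: a commutator-type term measuring how far $e^{-(-\Delta)^\alpha (h-s)}$ is from the identity acting on the nonlinearity, and a term measuring the discrepancy between the Burgers nonlinearities of $\th$ and $w$.

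Next I would estimate each piece. For the first piece, I apply Lemma \eqref{expcon} with $n=1$: since $\|\th(\cdot,s)\th_x(\cdot,s)\|_{C^2}$ is bounded by $C(N)$ uniformly on $[0,h]$ (using $\|\th\|_{C^3}\le N$), we get
\[
\Big\|\big(e^{-(-\Delta)^\alpha (h-s)}-1\big)\big(\th\th_x\big)(\cdot,s)\Big\|_{C^1} \le C(\alpha)(h-s)\,C(N),
\]
and integrating over $s\in[0,h]$ yields an $O(h^2)$ bound. For the second piece, I would bound $\|\th(\cdot,s)\th_x(\cdot,s)-w(\cdot,s)w_x(\cdot,s)\|_{C^1}$ by $C(N)\|\th(\cdot,s)-w(\cdot,s)\|_{C^2}$, again using the $C^3$ a priori bounds on both $\th$ and $w$ (the latter from Lemma~\ref{localforh}). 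This requires controlling $\|\th(\cdot,s)-w(\cdot,s)\|_{C^2}$, which itself satisfies a Burgers-type difference equation driven only by the dissipative term $-(-\Delta)^\alpha\th$; since $\th$ has bounded $C^3$ norm, this forcing is bounded, so a Gronwall argument on $[0,h]$ gives $\|\th(\cdot,s)-w(\cdot,s)\|_{C^2}\le C(N)s$, contributing another $O(h^2)$ after integration.

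The main obstacle I anticipate is keeping track of the norms carefully: the commutator estimate \eqref{expcon} costs one derivative, so to control the $C^1$ difference I must invoke $C^2$ or $C^3$ bounds on the nonlinearity, and these must be supplied uniformly by the standing assumption $\|\th\|_{C^3}\le N$ together with the short-time $C^3$ control of $w$ from Lemma~\ref{localforh}. The delicate point is that the difference equation for $\th-w$ involves the dissipative operator $(-\Delta)^\alpha$, which is nonlocal and of order $2\alpha<1$; one must verify that applying it to a $C^3$ function and measuring the result in $C^2$ stays bounded, which follows from the kernel estimates \eqref{Phi} but requires some care near the singularity. Once these uniform $C^2$–$C^3$ bounds are in hand, the whole argument is a routine assembly of two $O(h^2)$ contributions, and the constant $C(\alpha,N)$ emerges explicitly from the constants in Lemma~\eqref{expcon} and the Gronwall step. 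I would organize the write-up so that the Duhamel comparison is stated first, the two error terms are isolated, and each is dispatched by the cited lemma, relegating the derivative bookkeeping to the uniform a priori estimates.
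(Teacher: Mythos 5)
Your proposal follows essentially the same route as the paper's proof: the Duhamel representation for $\th$ and the factored form of $v$, the splitting into semigroup-commutator terms handled by the estimate \eqref{expcon} with the $C^3$ a priori bounds, and the control of $\|\th(\cdot,s)-w(\cdot,s)\|_{C^2}\le C(\alpha)N^2 s$ via the difference equation for $\th-w$ (the paper's Lemma on this point integrates the equation directly using the uniform bounds on $\th$ and $w$, so not even a Gronwall iteration is needed, but this is a negligible variation). The argument is correct and matches the paper's proof in all essential respects.
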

\begin{proof}
Since $\|\th_0(x)\|_{C^3} \leq N,$ let us choose $h$ as in
Lemma~\ref{localforh}. Notice that by Duhamel's principle,
\[ \th(x,h) = e^{-(-\Delta)^\alpha h} \th_0(x) + \int\limits_0^h
e^{-(-\Delta)^\alpha (h-s)} (\th(x,s) \th_x(x,s))\,ds, \] while
\[ v(x,h) = e^{-(-\Delta)^\alpha h} \th_0(x) + \int\limits_0^h
e^{-(-\Delta)^\alpha h} (w(x,s) w_x(x,s))\,ds. \]
Then it follows from \eqref{expcon} that
\begin{eqnarray}\nonumber
\|\th(x,h)-v(x,h)\|_{C^1} \leq \int\limits_0^h \| e^{-(-\Delta)^\alpha (h-s)}
(\th(x,s) \th_x(x,s)) - e^{-(-\Delta)^\alpha h} (w(x,s)
w_x(x,s))\|_{C^1}\,ds \leq \\
\nonumber \int\limits_0^h \|\th(x,s) \th_x(x,s) - w(x,s)
w_x(x,s)\|_{C^1} \,ds + \int\limits_0^h
\|\left(e^{-(-\Delta)^\alpha (h-s)}-1\right) \th(x,s)
\th_x(x,s)\|_{C^1} + \\ \nonumber \int\limits_0^h
\|\left(e^{-(-\Delta)^\alpha h}-1\right) w(x,s)
w_x(x,s)\|_{C^1}\,ds \leq \int\limits_0^h \|\th(x,s) \th_x(x,s) -
w(x,s) w_x(x,s)\|_{C^1}\,ds +
\\ \label{uvdiff} C(\alpha)h \int\limits_0^h \left( \|\th(x,s)\th_x(x,s)\|_{C^2}+
\|w(x,s) w_x(x,s)\|_{C^2} \right)\,ds.
\end{eqnarray}
From \eqref{wvcon}, it follows that the last integral does not
exceed $C(\alpha) N^2 h^2.$ To estimate the remaining integral, we
need the following
\begin{lemma}\label{uvrough}
For every $0 \leq s \leq h,$ we have $\|\th(x,s)-w(x,s)\|_{C^2} \leq
C(\alpha)N^2 h.$
\end{lemma}
\begin{proof}
Observe that $g(x,s) \equiv \th(x,s) - w(x,s)$ solves
\[ g_t = g\th_x +wg_x -(-\Delta)^\alpha \th, \,\,\,g(x,0)=0. \]
Thus
\[ g(x,t) = \int\limits_0^t (g\th_x +wg_x -(-\Delta)^\alpha \th)\,ds.
\]
Because of \eqref{wvcon} and the assumption on $\th,$ we have
$\|g\th_x\|_{C^2},\|wg_x\|_{C^2} \leq CN^2,$ and $\|(-\Delta)^\alpha
\th\|_{C^2} \leq CN.$ 
Therefore, we can estimate that $\|g(x,t)\|_{C^2} \leq
C(\alpha)tN^2,$ for every $0 \leq t \leq h.$
\end{proof}
From Lemma~\ref{uvrough} it follows that
\begin{eqnarray*}
\int\limits_0^h \|\th \th_x - w w_x\|_{C^1}\,ds \leq \int\limits_0^h
\left(\|(\th-w) \th_x\|_{C^1}+\|w(\th_x-w_x)\|_{C^1} \right)\,ds \leq
C(\alpha)N^3 h^2.
\end{eqnarray*}
This completes the proof of Proposition~\ref{timesplit}.
\end{proof}

The next stage is to investigate carefully a single time splitting
step. The initial data $\th_0(x)$ will be smooth, $2L$-periodic,
odd, and satisfy $\th_0(L)=0.$ It is not hard to see that all these
assumptions are preserved by the evolution. We will assume a
certain lower bound on $\th_0(x)$ for $0 \leq x \leq L,$ and derive
a lower bound that must hold after the small time step. The lower
bound will be given by the following piecewise linear functions on
$[0,L]:$
\[ \varphi (\kappa,H,a,x) = \left\{ \begin{array}{ll} \kappa x,
&  0 \leq x \leq \delta \equiv H/\kappa \\
H, & \delta \leq x \leq L-a \\
\frac{H}{a} (L-x), & L-a \leq x \leq L. \end{array} \right.
\]

Here $L,$ $\kappa,$ $H$ and $a$ may depend only on $\alpha$ and
will be specified later. We will set $a\leq L/4,$ $\delta\leq L/4$
and will later verify that this condition is preserved throughout
the construction. We assume that blow up does not happen until
time $T$ (to be determined later). Let $N = {\rm sup}_t
\|\th(x,t)\|_{C^3}.$
\begin{lemma}\label{burstep}
Assume that the initial data $\th_0(x)$ for the equation \eqref{weq}
satisfies the above assumptions. Then for every $h$ small enough
($h \leq CN^{-1}$ is sufficient), we have
\[ w(x,h) \geq \varphi \left(\frac{\kappa}{1-\kappa h}, H, a+\|\th_0\|_{L^\infty}h,x\right),\ \ \ 0\leq x\leq L. \]
\end{lemma}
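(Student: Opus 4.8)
The plan is to solve the inviscid Burgers equation $w_t=ww_x$ exactly via characteristics and then track how each of the three pieces of the lower bound $\varphi(\kappa,H,a,x)$ evolves over a single time step of length $h$. Recall that $w(x,h)$ satisfies $w(X(h),h)=\th_0(x_0)$ where characteristics move as $X(h)=x_0-\th_0(x_0)h$ (the sign convention coming from the equation $w_t=ww_x$, so that points with larger $w$ move in the negative direction relative to the nonlinearity). The key monotonicity tool is the maximum principle for this first-order transport equation: if $\th_0(x)\ge\varphi(\kappa,H,a,x)$ pointwise on $[0,L]$, then we expect $w(x,h)$ to dominate the solution issuing from $\varphi$ itself, and the latter can be computed explicitly. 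So the first step I would carry out is to reduce the lemma to the case $\th_0=\varphi$, invoking comparison.

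Next I would analyze the three regimes separately, exploiting that $\varphi$ is piecewise linear so that characteristics emanating from a linear piece remain organized. On the initial ramp $0\le x\le\delta$ where $\varphi=\kappa x$, a point $x_0$ travels to $x_0-\kappa x_0 h=x_0(1-\kappa h)$ while carrying the value $\kappa x_0$. Eliminating $x_0$ shows that at the new location $x=x_0(1-\kappa h)$ the value is $\kappa x_0=\frac{\kappa}{1-\kappa h}x$, which is exactly the steepened slope $\frac{\kappa}{1-\kappa h}$ claimed in the statement. This is the heart of the shock-steepening mechanism: the slope at the origin increases from $\kappa$ to $\kappa/(1-\kappa h)$ over one step. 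On the flat plateau where $\varphi=H$, characteristics all carry value $H$ and translate rigidly by $-Hh$, so the plateau persists at height $H$. On the descending piece near $x=L$, the characteristics carry smaller values and spread out; the effect is that the foot of the descent widens, which is captured by replacing $a$ with $a+\|\th_0\|_{L^\infty}h\ge a+Hh$. I would check that the characteristic map remains monotone (no shock forms within the step), which holds because $h\le CN^{-1}$ keeps $1-\th_0'(x)h$ bounded away from zero, so $w(\cdot,h)$ is single-valued and the explicit lower bounds glue together consistently across the three regions.

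The main obstacle, and the step deserving the most care, is matching the three pieces at their junctions and verifying the claimed bound uniformly across $[0,L]$ rather than only within the interior of each regime. At the corner $x=\delta$ between the ramp and the plateau, and at $x=L-a$ between the plateau and the descent, the characteristics from adjacent pieces can cross or leave gaps, and one must confirm that the envelope of the transported graph still lies above $\varphi(\frac{\kappa}{1-\kappa h},H,a+\|\th_0\|_{L^\infty}h,\cdot)$. Concavity-type arguments and the fact that $w$ is obtained as the lower envelope of transported linear segments should close this; in particular the widened parameter $a+\|\th_0\|_{L^\infty}h$ is chosen precisely to absorb the worst-case leftward displacement $\|\th_0\|_{L^\infty}h$ of the plateau's right edge, guaranteeing the bound holds even at the junction. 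Once the pointwise comparison is established for $w$ arising from $\varphi$, the smoothness-based comparison from the reduction step transfers it to the actual smooth $\th_0$, completing the proof.
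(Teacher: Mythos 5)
Your computational core is sound and is essentially the paper's (very tersely stated) argument: the paper's proof consists of the single remark that \eqref{weq} is solved explicitly by characteristics, with the $C^3$ bound and $h\leq CN^{-1}$ guaranteeing the solution exists and stays smooth on $[0,h]$. Your explicit bookkeeping is correct: along characteristics $x\mapsto x-\theta_0(x)h$ the ramp slope steepens from $\kappa$ to $\kappa/(1-\kappa h)$, the plateau translates at height $H$, the descent foot widens by at most $\|\theta_0\|_{L^\infty}h$, and $h\leq CN^{-1}$ keeps the characteristic map strictly increasing so no shock forms.

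However, your first step --- reducing to $\theta_0=\varphi$ by ``invoking comparison'' --- is a genuine gap. Any comparison/order-preservation principle for the Burgers equation (e.g.\ via entropy solutions) requires the initial data to be ordered on the \emph{whole} line or the whole period, whereas the hypothesis $\theta_0\geq\varphi$ holds only on $[0,L]$; and it \emph{cannot} be extended to a global ordering, because $\theta_0$ is odd, hence $\theta_0\leq 0$ on $[-L,0]$, so any extension of $\varphi$ lying below $\theta_0$ there must be nonpositive, and the evolution of such an extension on $[0,L]$ is no longer the clean transported picture you compute (its left part moves \emph{rightward}, into $[0,L]$). The danger is real: the value $w(y,h)$ for $y$ near $0$ a priori depends on $\theta_0$ on $[y-\|\theta_0\|_{L^\infty}h,\,y+\|\theta_0\|_{L^\infty}h]$, which sticks out of $[0,L]$, and characteristics carrying negative values from $x<0$ would destroy the claimed positive lower bound if they entered $\{y>0\}$. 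What rules this out is not comparison but the structural hypotheses you never use: $\theta_0(0)=\theta_0(L)=0$ (oddness and periodicity), so $0$ and $L$ are fixed points of the map $\Psi(x)=x-\theta_0(x)h$, which for $h\leq CN^{-1}$ is strictly increasing and therefore maps $[0,L]$ bijectively onto itself. Consequently every $y\in[0,L]$ has its characteristic foot $x=\Psi^{-1}(y)$ inside $[0,L]$, exactly where $\theta_0(x)\geq\varphi(\kappa,H,a,x)$ is available, and your piecewise estimates then close the proof directly on $\theta_0$ with no reduction step at all: for instance, if $x\leq\delta$ then $w(y,h)=\theta_0(x)\geq\kappa x$ and $y=x-\theta_0(x)h\leq x(1-\kappa h)$, giving $w(y,h)\geq \kappa y/(1-\kappa h)$; if $x$ is in the plateau region then $w(y,h)\geq H$; and if $x\in[L-a,L]$ then $y\geq L-a-\|\theta_0\|_{L^\infty}h$ and $\theta_0(x)\geq \frac{H}{a}(L-x)$ yields $w(y,h)\geq\frac{H}{a+\|\theta_0\|_{L^\infty}h}(L-y)$. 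So keep your computations, but replace the comparison reduction by this localization of the backward characteristics.
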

\begin{proof}
The Burgers equation can be solved explicitly using
characteristics. The existence of $C^3$ solution $w(x,t)$ for $t\leq h$ is assured by the assumption on the initial data and $h.$
\end{proof}

Now we consider the effect of the viscosity time step. Suppose that the initial data $v_0(x)$ for \eqref{veq}
satisfies the same conditions as stated for $\th_0(x)$ above:
periodic, odd,
$v_0(L)=0.$ 
Then we have
\begin{lemma}\label{disstep}
Assume that for $0 \leq x \leq L,$ $v_0(x) \geq \varphi(\kappa, H,
a, x).$ Moreover, assume that
\begin{equation}\label{condstep}
H\kappa^{-1}\leq a,\ \ \ L\geq 4a,\ \ \
L^{-2\alpha}\|v_0\|_{L^\infty} \leq 4Ha^{-2\alpha}.
\end{equation}
Then for every sufficiently small $h,$ we have
\[ v(x,h) \geq \varphi(\kappa(1-C(\alpha)h H^{-2\alpha}\kappa^{2\alpha}),
H(1-C(\alpha)hH^{-2\alpha}\kappa^{2\alpha}), a,x),\ \ \ 0\leq x\leq L.
\]
\end{lemma}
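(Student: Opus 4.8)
The plan is to work directly from the convolution representation \eqref{disssol} at $t=h$, namely $v(x,h)=\int_\Rm \Phi_h(x-y)v_0(y)\,dy$, and to track how the fractional heat kernel rounds the single corner of the profile $\varphi(\kappa,H,a,\cdot)$ located at $\delta=H/\kappa$. The heuristic, which also fixes the constants, is that one time step smooths this kink on the length scale $\delta$, and since $(-\Delta)^\alpha$ has order $2\alpha$ the relative loss of height and of slope is of size $h\delta^{-2\alpha}=h\kappa^{2\alpha}H^{-2\alpha}$, precisely the factor in the statement. Because $H$ and $\kappa$ are reduced by the \emph{same} factor, the width $\delta'=H'/\kappa'=H/\kappa=\delta$ of the rising ramp is unchanged, and since only $H$ decreases the descending slope $H'/a$ drops automatically; thus the output is again of the form $\varphi(\kappa',H',a,\cdot)$ with the same $a$.

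First I would use that $v_0$ is odd to antisymmetrize the kernel: writing the integral over $(-\infty,0)$ in terms of $(0,\infty)$ gives
\[ v(x,h)=\int_0^\infty \big(\Phi_h(x-y)-\Phi_h(x+y)\big)\,v_0(y)\,dy, \]
and for $x,y>0$ the kernel $G_h(x,y)=\Phi_h(x-y)-\Phi_h(x+y)$ is nonnegative by the evenness and radial monotonicity of $\Phi$ recorded in \eqref{Phi}. Next, using the antisymmetry of $v_0$ about $x=L$ (a consequence of oddness, periodicity and $v_0(L)=0$) together with periodicity, I would fold all of $(0,\infty)$ back onto $[0,L]$, producing a net kernel on $[0,L]\times[0,L]$ that is positive near the diagonal. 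On the diagonal block one has $v_0\ge\varphi\ge0$, so $G_h\ge0$ lets us replace $v_0$ by $\varphi$ and reduce to bounding $\int_0^L G_h(x,y)\varphi(y)\,dy$ from below. Finally I would carry out this estimate region by region — the rising ramp $0\le x\le\delta$, the plateau $\delta\le x\le L-a$, and the descent $L-a\le x\le L$ — using the scaling $\Phi_h(z)=h^{-1/2\alpha}\Phi(h^{-1/2\alpha}z)$ and the tail bounds in \eqref{Phi}. The worst case is at the corner $x=\delta$, where the second difference of $\varphi$ tested against $\Phi_h$ integrates (convergent precisely because $\alpha<1/2$) to a loss of order $h\kappa^{2\alpha}H^{1-2\alpha}$; propagating the same computation to $x=0$ controls the slope and yields $v'(0,h)\ge\kappa'$, while on the plateau and descent the loss is no worse.

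The main obstacle is the nonlocal tail: the value $v(x,h)$ for $x\in[0,L]$ genuinely feels the negative lobes of $v_0$ beyond $L$, the descending part near $L$, and all the periodic images, and these must be shown to be dominated by the local smoothing near $\delta$. Quantitatively, after folding, the kernel carries a defect whose size is governed by $\|v_0\|_{L^\infty}$ times the mass of $\Phi_h$ at distances of order $a$ and $L$, i.e. a term of order $h\,\|v_0\|_{L^\infty}a^{-2\alpha}$ (plus a smaller $L^{-2\alpha}$ piece). This is exactly where the hypotheses \eqref{condstep} enter: $H\kappa^{-1}\le a$ guarantees $\delta\le a$ so the ramp sits well inside the plateau, $L\ge4a$ separates the two corners, and the third inequality $L^{-2\alpha}\|v_0\|_{L^\infty}\le 4Ha^{-2\alpha}$ is calibrated so that this defect is absorbed into the admissible loss $C(\alpha)h\kappa^{2\alpha}H^{1-2\alpha}$ of the main term without enlarging the constant beyond $C(\alpha)$. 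Verifying that the folded interval kernel is nonnegative up to this controlled defect — so that the clean regional bounds survive — is the delicate technical point; once it is in place, the three regional estimates assemble into $v(x,h)\ge\varphi(\kappa',H',a,x)$ on $[0,L]$ with $\kappa'=\kappa(1-C(\alpha)hH^{-2\alpha}\kappa^{2\alpha})$ and $H'=H(1-C(\alpha)hH^{-2\alpha}\kappa^{2\alpha})$, as claimed.
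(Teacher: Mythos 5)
First, a point of order: the paper does not actually prove Lemma~\ref{disstep} --- it says ``the proof is straightforward but fairly technical'' and defers entirely to \cite{KNS} --- so your attempt can only be measured against what a complete proof requires. Your architecture is the right one, and almost certainly the one the cited proof follows: the representation \eqref{disssol}, antisymmetrization via oddness (so that $G_h(x,y)=\Phi_h(x-y)-\Phi_h(x+y)\ge 0$ for $x,y>0$ by \eqref{Phi}), reduction to testing against $\varphi$, regional estimates on ramp/plateau/descent, the loss scale $h\kappa^{2\alpha}H^{-2\alpha}$, and the observation that $\delta'=H'/\kappa'=\delta$ so the output is again of the form $\varphi(\kappa',H',a,\cdot)$.

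The gap is exactly at the step you flag as delicate, and it is not a verification but the heart of the matter; moreover your quantitative accounting of it is mis-calibrated, so the proof as sketched would fail. (i) The doubly-folded kernel on $[0,L]^2$ is \emph{not} nonnegative term by term: for $x,y\in(0,L)$ and every image $n\ge 1$ one has $|x-y-2nL|=2nL-x+y>2nL-x-y=|x+y-2nL|$, so $\Phi_h(x-y-2nL)-\Phi_h(x+y-2nL)\le 0$; unlike Brownian motion, for a jump process the method of images does not produce a killed-process kernel, and indeed the four-term folded kernel is genuinely negative at some points (e.g.\ at $y=0$ it equals $\Phi_h(2L+x)-\Phi_h(2L-x)<0$). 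What is true, and what must actually be proved, is a quantitative lower bound of the form $Q(x,y)\ge -C(\alpha)hL^{-1-2\alpha}$ for the kernel obtained by pairing $[0,L]$ with its reflection through $L$; nothing in your sketch supplies this. (ii) Your defect bound ``$h\|v_0\|_{L^\infty}a^{-2\alpha}$, absorbed via the third inequality of \eqref{condstep}'' does not work: the hypothesis $L^{-2\alpha}\|v_0\|_{L^\infty}\le 4Ha^{-2\alpha}$ converts kernel mass at distance $\sim L$ into an admissible loss, $h\|v_0\|_{L^\infty}L^{-2\alpha}\le 4hHa^{-2\alpha}\le 4hH\kappa^{2\alpha}H^{-2\alpha}$, but it does \emph{not} control mass at distance $\sim a$: one only gets $h\|v_0\|_{L^\infty}a^{-2\alpha}\le 4hH L^{2\alpha}a^{-4\alpha}$, which is not $\lesssim h\kappa^{2\alpha}H^{1-2\alpha}$ under \eqref{condstep} (take $\delta=a$ and $L\gg a$). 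Consequently the negative lobe of $v_0$ just beyond $L$ --- which lies at distance only $\sim a$ or less from the descent region and the plateau edge --- cannot be written off as a ``defect'' at all; it must be cancelled \emph{structurally} against the values just inside $L$, using that $v_0$, hence $v(\cdot,h)$, is exactly antisymmetric about $L$ (so $v(L,h)=0$), leaving unpaired only contributions at distance $\gtrsim L$, which is what the third hypothesis is designed for. Making that cancellation quantitative (either by proving monotonicity of the periodized stable kernel on the half-period, or by carrying out the near-$L$ estimate as a mirror image of the ramp estimate with slope $H/a$) is the missing core of the argument. A smaller inaccuracy: establishing $v'(0,h)\ge\kappa'$ is neither necessary nor sufficient --- the conclusion needs the pointwise bound $v(x,h)\ge\kappa'x$ on the whole ramp, and on the descent the admissible error must vanish linearly in $L-x$, which is precisely why crude tail bounds cannot be used there.
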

The proof of this lemma is straightforward but fairly technical.
We omit the proof here, referring for details to \cite{KNS}.

Combining Proposition~\ref{timesplit} and Lemmas~\ref{burstep} and
\ref{disstep}, we obtain
\begin{theorem}\label{keystep}
Assume that the initial data $\th_0(x)$ is $2L$-periodic, odd,
$\th_0(L)=0,$ and $\th_0(x) \geq \varphi(\kappa,H,a,x).$ Suppose that
\eqref{condstep} holds with $v_0$ replaced by $\th_0.$ Assume also
that the solution $\th(x,t)$ of the equation \eqref{bur1} with
initial data $\th_0(x)$ satisfies $\|\th(x,t)\|_{C^3} \leq N$ for $0
\leq t \leq T.$ Then for every $h \leq h_0(\alpha,N)$ small
enough, we have for $0 \leq x \leq L$
\begin{equation}\label{forest}
\th(x,h) \geq \varphi(\tilde{\kappa},
\tilde{H},a+h\|\th_0\|_{L^\infty},x),
\end{equation}
where \begin{equation}\label{newkap}
\tilde{\kappa}=\kappa(1-C(\alpha)\kappa^{2\alpha}
H^{-2\alpha}h)(1-\kappa h)^{-1} - C(\alpha, N)h^2
\end{equation}
and \begin{equation}\label{newH} \tilde{H}=
H(1-C(\alpha)\kappa^{2\alpha} H^{-2\alpha}h)-C(\alpha,N)h^2.
\end{equation}
\end{theorem}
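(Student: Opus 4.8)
The plan is to chain together the three preceding results, following the time-splitting scheme in the order $\theta_0 \mapsto w(\cdot,h) \mapsto v(\cdot,h) \mapsto \theta(\cdot,h)$, and then to verify that the accumulated parameter shifts collapse into the two update rules \eqref{newkap}, \eqref{newH}. First I would apply Lemma~\ref{burstep} to the inviscid half-step: since $\theta_0 \geq \varphi(\kappa,H,a,\cdot)$ on $[0,L]$ and $h \leq CN^{-1}$, we obtain $w(\cdot,h) \geq \varphi(\kappa',H,a',\cdot)$ with $\kappa' = \kappa/(1-\kappa h)$ and $a' = a + \|\theta_0\|_{L^\infty}h$. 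This barrier is then fed as the initial datum into the dissipation half-step.

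Next I would invoke Lemma~\ref{disstep} with the intermediate parameters $(\kappa',H,a')$ in place of $(\kappa,H,a)$. Before doing so one must check that \eqref{condstep} continues to hold for these parameters; since it is assumed for $(\kappa,H,a)$ and since, for $h$ small, $\kappa'\to\kappa$ and $a'\to a$ while $\|w(\cdot,h)\|_{L^\infty} = \|\theta_0\|_{L^\infty}$ (the inviscid Burgers flow transports values along characteristics and hence preserves the sup-norm), each of the three requirements in \eqref{condstep} survives by continuity, as does $a' \leq L/4$. Lemma~\ref{disstep} then yields $v(\cdot,h) \geq \varphi(\kappa'(1-C(\alpha)h(\kappa')^{2\alpha}H^{-2\alpha}),\, H(1-C(\alpha)h(\kappa')^{2\alpha}H^{-2\alpha}),\, a',\cdot)$. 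Expanding $(\kappa')^{2\alpha} = \kappa^{2\alpha}(1-\kappa h)^{-2\alpha} = \kappa^{2\alpha}(1+O(h))$ shows that the slope and height produced here agree with $\kappa(1-C(\alpha)\kappa^{2\alpha}H^{-2\alpha}h)(1-\kappa h)^{-1}$ and $H(1-C(\alpha)\kappa^{2\alpha}H^{-2\alpha}h)$ up to errors of size $C(\alpha,N)h^2$.

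Finally I would transfer the lower bound from the split solution $v(\cdot,h)$ to the true solution $\theta(\cdot,h)$ using Proposition~\ref{timesplit}, which gives $\|\theta(\cdot,h)-v(\cdot,h)\|_{C^1}\leq C(\alpha,N)h^2$. The delicate point is that a uniform $C^0$ error of size $C(\alpha,N)h^2$ would destroy the lower bound near the origin, where $\varphi$ is only of order $\kappa x$. This is resolved by oddness: both $\theta(\cdot,h)$ and $v(\cdot,h)$ are odd and hence vanish at $x=0$, so their difference vanishes there too, and the $C^1$ bound gives $|\theta(x,h)-v(x,h)| \leq C(\alpha,N)h^2\,x$ for small $x$ by the mean value theorem. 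Thus the splitting error degrades the slope by only $C(\alpha,N)h^2$, producing the $-C(\alpha,N)h^2$ correction in $\tilde\kappa$, while in the flat middle region the plain $C^0$ bound produces the $-C(\alpha,N)h^2$ correction in $\tilde H$. Collecting these and absorbing the $O(h^2)$ expansion errors from the previous step into the same $C(\alpha,N)h^2$ yields exactly \eqref{newkap}, \eqref{newH}, and hence \eqref{forest}.

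I expect the main obstacle to be precisely this last transfer: controlling how the $C^1$-small splitting error acts on the piecewise-linear barrier $\varphi$ near the corner at the origin, where the barrier is small. The use of oddness to convert the $C^1$ estimate into an $x$-weighted pointwise estimate is the crux, and one must also confirm that the structural inequalities \eqref{condstep} are genuinely preserved under the one-step parameter update so that Lemma~\ref{disstep} applies at all.
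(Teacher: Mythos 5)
Your proposal is correct and follows essentially the same route as the paper: the paper's proof likewise chains Lemma~\ref{burstep} and Lemma~\ref{disstep} to obtain the bound \eqref{vbound} on the split solution $v(x,h)$, and then invokes Proposition~\ref{timesplit} to pass to $\th(x,h)$. In fact you supply details the paper leaves implicit --- the verification that \eqref{condstep} survives the intermediate parameter update, the absorption of the $(\kappa')^{2\alpha}$ versus $\kappa^{2\alpha}$ discrepancy into the $C(\alpha,N)h^2$ terms, and, crucially, the use of oddness plus the $C^1$ (rather than $C^0$) splitting estimate to get the $x$-weighted error bound near the corner of $\varphi$ at the origin, which is exactly why the correction enters $\tilde\kappa$ as a slope deficit rather than destroying the barrier there.
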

\begin{proof}
We can clearly assume that $\kappa h  \leq 1/2;$ in view of our
assumptions on $\th_0,$ $h \leq 1/2N$ is sufficient for that. Then
Lemmas~\ref{burstep} and \ref{disstep} together ensure that the
time splitting solution $v(x,h)$ of \eqref{weq} and \eqref{veq}
satisfies for $0\leq x \leq L$
\begin{equation}\label{vbound}
v(x,h) \geq \varphi(\kappa(1-C(\alpha)\kappa^{2\alpha}
H^{-2\alpha}h)(1-\kappa h)^{-1}, H(1-C(\alpha)\kappa^{2\alpha}
H^{-2\alpha}h), a+\|\th_0\|_{L^\infty}h,x).
\end{equation}
Furthermore, Proposition~\ref{timesplit} allows us to pass from
the lower bound on $v(x,h)$ to lower bound on $\th(x,h),$ leading to
\eqref{forest}, \eqref{newkap}, \eqref{newH}.
\end{proof}

From Theorem~\ref{keystep}, we immediately infer
\begin{corollary}\label{diffsys}
Under assumptions of the previous theorem and the additional
assumption stated below, for all $h$ small enough we have for $0
\leq x \leq L$ and $0 \leq nh \leq T$
\begin{equation}\label{unest}
\th(x,nh) \geq \varphi(\kappa_n, H_n, a_n ,x).
\end{equation}
Here
\begin{equation}\label{kaprec}
\kappa_n = \kappa_{n-1} ( 1-C(\alpha)\kappa_{n-1}^{2\alpha}
H^{-2\alpha}_{n-1}h)(1-\kappa_{n-1} h)^{-1} - C(\alpha, N)h^2,
\end{equation}
\begin{equation}\label{Hrec2}
H_n = H_{n-1}(1-C(\alpha)\kappa_{n-1}^{2\alpha} H^{-2\alpha}_{n-1}h)
-C(\alpha,N)h^2,
\end{equation}
and
\begin{equation}\label{arec}
a_n=a+nh\|\th_0\|_{L^\infty}.
\end{equation}
The corollary only holds assuming that for every $n,$ we have
\begin{equation}\label{Lcon}
H_n\kappa_n^{-1}\leq a_n,\ \ L\geq 4a_n,\ \
L^{-2\alpha}\|\th_0\|_{L^\infty} \leq 4H_n a_n^{-2\alpha}.
\end{equation}
\end{corollary}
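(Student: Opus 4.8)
The plan is a straightforward induction on $n$, with Theorem~\ref{keystep} supplying the one-step passage and the non-increasing $L^\infty$ norm handling the bookkeeping for the widening parameter. The base case $n=0$ is just the hypothesis $\th(x,0)=\th_0(x)\geq\varphi(\kappa,H,a,x)=\varphi(\kappa_0,H_0,a_0,x)$.

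For the inductive step, suppose \eqref{unest} holds at time $(n-1)h$, so $\th(x,(n-1)h)\geq\varphi(\kappa_{n-1},H_{n-1},a_{n-1},x)$ on $[0,L]$. I would apply Theorem~\ref{keystep} on the interval $[(n-1)h,nh]$, treating $\th(\cdot,(n-1)h)$ as the incoming data with profile parameters $(\kappa_{n-1},H_{n-1},a_{n-1})$. Three hypotheses need checking. First, the structural conditions (periodic, odd, vanishing at $L$) persist under the flow, as already noted. Second, the lower bound is exactly the inductive hypothesis. Third, condition \eqref{condstep} with $v_0=\th(\cdot,(n-1)h)$ and parameters $(\kappa_{n-1},H_{n-1},a_{n-1})$ must hold: its first two inequalities are precisely \eqref{Lcon} at index $n-1$, while the third, $L^{-2\alpha}\|\th(\cdot,(n-1)h)\|_{L^\infty}\leq 4H_{n-1}a_{n-1}^{-2\alpha}$, follows from \eqref{Lcon} once we use that $\|\th(\cdot,(n-1)h)\|_{L^\infty}\leq\|\th_0\|_{L^\infty}$ because all $L^p$ norms are non-increasing. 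Theorem~\ref{keystep} then yields
\[ \th(x,nh) \geq \varphi\big(\tilde\kappa_{n-1}, \tilde H_{n-1}, a_{n-1} + h\|\th(\cdot,(n-1)h)\|_{L^\infty}, x\big), \]
where, by \eqref{newkap}--\eqref{newH}, the first two arguments are exactly $\kappa_n$ and $H_n$ of \eqref{kaprec}--\eqref{Hrec2}.

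It remains to replace the genuine widening parameter $a_{n-1}+h\|\th(\cdot,(n-1)h)\|_{L^\infty}$ by the clean closed form $a_n=a+nh\|\th_0\|_{L^\infty}$ of \eqref{arec}. Here I would invoke once more that $\|\th(\cdot,(n-1)h)\|_{L^\infty}\leq\|\th_0\|_{L^\infty}$, so the genuine parameter is at most $a_{n-1}+h\|\th_0\|_{L^\infty}=a_n$. Since $\varphi(\kappa,H,a,x)$ is non-increasing in $a$ (on $[0,\delta]$ it does not depend on $a$; on the rest, enlarging $a$ only flattens and lowers the descending ramp), overestimating the widening parameter by $a_n$ only decreases the profile, giving $\th(x,nh)\geq\varphi(\kappa_n,H_n,a_n,x)$, which is \eqref{unest} at step $n$. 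One also checks that \eqref{Lcon} guarantees $\delta_n=H_n\kappa_n^{-1}\leq a_n$ and $L-a_n\geq 3a_n\geq\delta_n$, so each $\varphi(\kappa_n,H_n,a_n,\cdot)$ is an admissible member of the family. Taking $h\leq h_0(\alpha,N)$ as in Theorem~\ref{keystep} --- a threshold depending only on $\alpha$ and $N$, hence uniform in $n$ --- lets the induction run for every $n$ with $nh\leq T$.

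The argument is essentially mechanical; the only points requiring care are the two uses of the non-increasing $L^\infty$ norm --- once to verify the third inequality of \eqref{condstep} at each step, and once to absorb the step-dependent growth of the widening parameter into the explicit $a_n=a+nh\|\th_0\|_{L^\infty}$ --- together with the monotonicity of $\varphi$ in $a$ that makes this absorption legitimate. I expect no genuine obstacle here, since the real work has already been carried out in Proposition~\ref{timesplit} and Lemmas~\ref{burstep} and \ref{disstep}, which combine into Theorem~\ref{keystep}.
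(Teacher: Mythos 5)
Your proof is correct and follows essentially the same route as the paper, which states the corollary as an immediate consequence of Theorem~\ref{keystep}, i.e.\ precisely the induction you spell out. The two bookkeeping points you identify --- using the non-increasing $L^\infty$ norm to verify the third inequality of \eqref{condstep} at each step and to absorb the widening parameter into $a_n=a+nh\|\th_0\|_{L^\infty}$ via monotonicity of $\varphi$ in $a$ --- are exactly the details the paper leaves implicit.
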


To study \eqref{kaprec} and \eqref{Hrec2}, we introduce the
following system of differential equations:
\begin{equation}\label{diffeqsys}
\kappa' = \kappa^2 - C(\alpha) \kappa^{1+2\alpha}
H^{-2\alpha};\,\,\,H'=-C(\alpha)\kappa^{2\alpha} H^{1-2\alpha}.
\end{equation}
\begin{lemma}\label{deapp}
Assume that $[0,T]$ is an interval on which the solutions of the
system \eqref{diffeqsys} satisfy $|\kappa (t)| \leq 2N,$
$0<H_1(\alpha) \leq H(t) \leq H_0(\alpha).$ Then for every
$\epsilon>0,$ there exists $h_0(\alpha,N,\epsilon)>0$ such that if
$h<h_0,$ then $\kappa_n$ and $H_n$ defined by \eqref{kaprec} and
\eqref{Hrec2} satisfy $|\kappa_n - \kappa(nh)|<\epsilon,$ $|H_n
-H(nh)| < \epsilon$ for every $n \leq [T/h].$
\end{lemma}
\begin{proof}
This is a standard result on approximation of differential
equations by a finite difference scheme. Observe that the
assumptions on $\kappa(t)$ and $H(t)$ also imply upper bounds on
$\kappa'(t),$ $\kappa''(t),$ $H'(t)$ and $H''(t)$ by a certain
constant depending only on $N$ and $\alpha.$ The result can be
proved comparing the solutions step-by-step inductively. Each step
produces an error not exceeding $C_1(\alpha,N)h^2,$ and the total
error over $[T/h]$ steps is estimated by $C_1(\alpha,N)h.$
Choosing $h_0(\alpha,N,\epsilon)$ sufficiently small completes the
proof.
\end{proof}
The final ingredient we need is the following lemma on the
behavior of solutions of the system \eqref{diffeqsys}.
\begin{lemma}\label{conlaw}
Assume that the initial data for the system \eqref{diffeqsys}
satisfy
\begin{equation}\label{iccon} H_0^{2\alpha} \kappa_0^{1-2\alpha}
\geq C(\alpha)/(1-2\alpha). \end{equation} Then on every interval
$[0,T]$ on which the solution makes sense (that is, $\kappa(t)$
bounded), the function $H(t)^{2\alpha} \kappa^{1-2\alpha}(t)$ is
non-decreasing.
\end{lemma}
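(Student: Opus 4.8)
The plan is to track the single combined quantity $Q(t) := H(t)^{2\alpha}\kappa(t)^{1-2\alpha}$ and show directly that its initial constraint is exactly what forces it to be non-decreasing. First observe that, writing $c^\ast := C(\alpha)/(1-2\alpha)$, the hypothesis \eqref{iccon} is precisely the statement $Q(0)\ge c^\ast$. Since $0<\alpha<1/2$ we have $1-2\alpha>0$, and on the interval $[0,T]$ where the solution makes sense we will use that $\kappa,H>0$: indeed $\kappa=0$ is an equilibrium of the first equation in \eqref{diffeqsys}, so $\kappa$ stays positive if it starts positive, and $H'<0$ keeps $H$ positive down to the (possibly finite) time it would vanish. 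This positivity is what makes all the fractional powers below legitimate and is the only genuine hypothesis I need to keep track of.

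Next I would compute the logarithmic derivative of $Q$ using \eqref{diffeqsys}. From $H'/H=-C(\alpha)\kappa^{2\alpha}H^{-2\alpha}$ and $\kappa'/\kappa=\kappa-C(\alpha)\kappa^{2\alpha}H^{-2\alpha}$ one gets
\[
(\log Q)'=2\alpha\,\frac{H'}{H}+(1-2\alpha)\,\frac{\kappa'}{\kappa}
=(1-2\alpha)\kappa-C(\alpha)\kappa^{2\alpha}H^{-2\alpha},
\]
since the two $\kappa^{2\alpha}H^{-2\alpha}$ contributions combine with coefficient $2\alpha+(1-2\alpha)=1$. The key algebraic step, which is the heart of the argument, is to factor this cleanly in terms of $Q$ itself:
\[
(\log Q)'=\kappa^{2\alpha}H^{-2\alpha}\bigl[(1-2\alpha)\kappa^{1-2\alpha}H^{2\alpha}-C(\alpha)\bigr]
=(1-2\alpha)\,\kappa^{2\alpha}H^{-2\alpha}\,\bigl(Q-c^\ast\bigr).
\]
In particular this already shows the pointwise equivalence $(\log Q)'\ge 0\iff Q\ge c^\ast$, i.e. $Q$ grows exactly when it lies above the threshold.

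Finally I would convert this into a genuine invariance statement. Multiplying by $Q>0$ gives
\[
Q'=\lambda(t)\,\bigl(Q-c^\ast\bigr),\qquad
\lambda(t):=(1-2\alpha)\,\kappa(t)^{2\alpha}H(t)^{-2\alpha}\,Q(t)>0,
\]
so $R:=Q-c^\ast$ obeys the \emph{linear} homogeneous equation $R'=\lambda(t)R$ with $\lambda>0$. Hence $R(t)=R(0)\exp\!\bigl(\int_0^t\lambda(s)\,ds\bigr)$ preserves the sign of $R(0)$; since $R(0)=Q(0)-c^\ast\ge 0$ by \eqref{iccon}, we get $Q(t)\ge c^\ast$ for all $t\in[0,T]$, and therefore $Q'(t)=\lambda(t)R(t)\ge 0$, which is the claimed monotonicity. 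The main subtlety to watch is not the computation but the invariance logic: on the boundary $\{Q=c^\ast\}$ one only has $Q'=0$, so a naive barrier/contradiction argument is borderline, and recasting the problem as the linear ODE $R'=\lambda R$ is what makes sign preservation immediate and rigorous rather than relying on a strict inequality. I would only need to note in passing that $\lambda$ is integrable on $[0,T]$, which follows from the assumed boundedness of $\kappa$ and the lower bound on $H$ available on the interval in question.
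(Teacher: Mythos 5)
Your proof is correct and rests on exactly the same identity as the paper's: the paper's one-line proof is the computation $\bigl(H^{2\alpha}\kappa^{1-2\alpha}\bigr)' = (1-2\alpha)\,\kappa\,\bigl(H^{2\alpha}\kappa^{1-2\alpha} - C(\alpha)/(1-2\alpha)\bigr)$, which is precisely your $Q' = \lambda(t)(Q-c^\ast)$ since $\lambda = (1-2\alpha)\kappa^{2\alpha}H^{-2\alpha}Q = (1-2\alpha)\kappa$. The only difference is that you spell out the sign-preservation step (via the explicit solution of the linear equation for $R = Q - c^\ast$) that the paper leaves implicit, which is a sound and welcome completion rather than a different method.
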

\begin{proof}
A direct computation shows that
\[ \left( H(t)^{2\alpha} \kappa^{1-2\alpha}(t) \right)' = (1-2\alpha)
\kappa(t) \left(H(t)^{2\alpha} \kappa(t)^{1-2\alpha} -
\frac{C(\alpha)}{1-2\alpha}\right). \]
\end{proof}
Now we are ready to complete the blow up construction.
\begin{proof}[Proof of Theorem~\ref{burbu}]
Set $\kappa_0$ to be large enough, in particular
\begin{equation}\label{Hzero}
\kappa_0 = \left(\frac{3C(\alpha)}{1-2\alpha}
\right)^{\frac{1}{1-2\alpha}}
\end{equation}
will do. Set $H_0=1$, $a=\kappa_0^{-1}$,
$T(\alpha)=\frac{3}{2\kappa_0}$. Choose $L$ so that
\begin{equation}\label{Lchoice}
L \geq 16a.
\end{equation}
The initial data $\th_0(x)$ will be a smooth, odd, $2L-$periodic
function satisfying $\th_0(L)=0$ and $\th_0(x) \geq
\varphi(\kappa_0,H_0,a,x).$ We will also assume
$\|\th_0\|_{L^\infty} \leq 2H_0.$ Observe that $H_0$ and $\kappa_0$
are chosen so that in particular the condition \eqref{iccon} is
satisfied.
From
\eqref{diffeqsys} and Lemma~\ref{conlaw} it follows that
\begin{equation}\label{kappabu}
\kappa' = \kappa^2 -C(\alpha)\kappa^{1+2\alpha}H^{-2\alpha} \geq
\frac23\kappa^2.
\end{equation}
This implies $\kappa (t) \geq \frac{1}{\kappa_0^{-1}-\frac23 t}.$ In
particular, there exists $t_0 < T(\alpha)$ such that
$\kappa(t_0)=2N$ for the first time. Note that due to
\eqref{kappabu}, for $0 \leq t \leq t_0$ we have
\begin{equation}\label{kup}
 \kappa(t) \leq \frac{1}{\frac23(t_0-t)+\frac{1}{2N}} \leq
\frac{3/2}{(t_0-t)}.
\end{equation}
Rewrite the equation for $H(t)$ as
\begin{equation}\label{Halpha}
(H^{2\alpha})' = -2C(\alpha)\alpha \kappa^{2\alpha}.
\end{equation}
Using the estimate \eqref{kup} in \eqref{Halpha}, we get that for
any $0 \leq t \leq t_0,$
\[ H^{2\alpha}(t) \geq H_0^{2\alpha} - 2C(\alpha)\alpha
\int_0^{t_0}\kappa^{2\alpha}(s)\,ds \geq H_0^{2\alpha}(1-\alpha).
\] We used the fact that $H_0=1,$ $t_0 < T(\alpha) = \frac{3}{2\kappa_0}$
and \eqref{Hzero}. Now we can apply Lemma~\ref{deapp} on the
interval $[0,t_0].$ Choosing $\epsilon$ and $h$ sufficiently
small, we find that for $0 \leq nh \leq t_0,$ $\kappa_n \geq 1$
and $H_n \geq (1-\alpha)^{1/2\alpha}H_0\geq H_0/2.$ Also,
evidently, $a_n \leq a+2H_0 T(\alpha)=4a.$ This allows us to check
that the conditions \eqref{Lcon} hold on each step due to the
choice of $L$ \eqref{Lchoice}, justifying control of the true PDE
dynamics by the system \eqref{diffeqsys}.

From Lemma~\ref{deapp} and $\kappa(t_0)=2N$, we also see that,
given that $h$ is sufficiently small, $\kappa_{n_0} \geq 3N/2$ for
some $n_0$ such that $n_0h \leq t_0 < T(\alpha).$
Thus Corollary~\ref{diffsys} provides us with a lower bound
$\th(0,n_0 h)=0,$ $\th(x, n_0 h) \geq 3Nx/2$ for small enough $x.$
This contradicts our assumption that $\|\th(x,t)\|_{C^3} \leq N$ for
$0 \leq t \leq T(\alpha),$ thus completing the proof.
\end{proof}

We obtained blow up in the case where period $2L$ was sufficiently
large (depending only on $\alpha$). However, examples of blow up
with arbitrary periodic data follow immediately from a scaling
argument. Indeed, assume $\th(x,t)$ is a $2L-$periodic solution of
\eqref{bur1}. Then $\th_1(x,t) = L^{-1+2\alpha} \th(Lx,L^{2\alpha} t)$
is a $2-$periodic solution of the same equation. Thus a scaling
procedure allows to build blow up examples for any period.

\it Remark. \rm Formally we proved the blow up only in $C^3$
class. But global regularity in $H^s$ class for
$s>\frac32-2\alpha$ provides global regularity in $C^\infty$ (see
\cite{KNS}, Theorem 1, so that blow up happens
in every $H^s$ class for
$s>\frac32-2\alpha$. \\

\vspace*{0.5cm}
\setcounter{equation}{0}
\section{Finite time blow up: the supercritical CCF model}\label{buccf}

In this section, we provide an elementary argument proving finite time blow up
in the CCF model for the part of supercritical dissipation range $1/4>\alpha  \geq 0.$ We will consider
the whole line setting here, since the argument is less technical in this case. Recall that the CCF equation is given by
\begin{equation}\label{CCFeq11}
\theta_t = (H\theta) \theta_x -(-\Delta)^\alpha \theta, \,\,\,\theta(x,0)=\theta_0(x),
\end{equation}
where $H\theta$ is the Hilbert transform of $\th.$

\begin{theorem}\label{CCFbu}
Let $1/4 > \alpha \geq 0.$ There exist smooth initial data $\theta_0(x)$ such that the solution $\theta(x,t)$ to the
dissipative CCF equation
forms a singularity in finite time.
\end{theorem}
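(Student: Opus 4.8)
The plan is to exhibit blow up by reducing the PDE to a differential inequality of Riccati type for a single well-chosen scalar functional, and to show that the fractional dissipation is too weak to defeat the nonlinear focusing precisely in the range $\alpha<1/4$. Since we work on the line, the first step is to fix a convenient symmetry class for the data that is preserved by \eqref{CCFeq11}. For even $\th$ the Hilbert transform $H\th$ is odd and $\th_x$ is odd, so $(H\th)\th_x$ is even and $(-\Delta)^\alpha\th$ is even; hence \emph{evenness is preserved} by the flow (oddness is not, since it would force $\th_t$ to have an even transport part). I would therefore take $\th_0$ smooth, even, decaying, with a single peak at the origin and a definite monotonicity structure, and record the a priori bound $\|\th(\cdot,t)\|_{L^\infty}\le\|\th_0\|_{L^\infty}=:M$ coming from the non-increase of $L^p$ norms noted earlier.

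The engine of the proof is a functional $I(t)=\int \th(x,t)\,K(x)\,dx$ built from a homogeneous weight $K$ chosen so that: (i) $I$ is finite and controlled by $\|\th\|_{C^1}$ while the solution is smooth, so that $I\to\infty$ in finite time forces a singularity; and (ii) $K$ has the scaling dimension that renders the transport contribution \emph{quadratic} in $I$. Differentiating along the flow gives
\[ I'(t)=\int (H\th)\,\th_x\,K\,dx-\int\big((-\Delta)^\alpha\th\big)K\,dx. \]
For the first (production) term I would exploit the algebraic structure of the Hilbert transform — the Cotlar-type identity $2H(\th\,H\th)=(H\th)^2-\th^2$ and the holomorphic structure of $\th+iH\th$ — together with the sign and monotonicity of the data to obtain a lower bound of the form $\int (H\th)\th_x\,K\,dx\ge c\,I^2$ with $c>0$ absolute. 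This is exactly the nonlocal focusing mechanism responsible for shock formation in the inviscid CCF equation.

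The remaining, and decisive, ingredient is the dissipative term. Using the singular-integral formula $(-\Delta)^\alpha\th(x)=c_\alpha\,\mathrm{p.v.}\!\int\frac{\th(x)-\th(y)}{|x-y|^{1+2\alpha}}\,dy$, I would split into near and far regions, optimize the splitting radius, and use the a priori bound $M$ to control the far part. The outcome should be an estimate showing that this term is \emph{strictly dominated} by the quadratic production once $I$ is large, and the bookkeeping of scaling exponents is exactly what singles out $\alpha<1/4$ as the admissible range. Granting this, one reaches $I'(t)\ge \tfrac{c}{2}I(t)^2$ for $I$ beyond a threshold depending only on $\alpha$ and $M$, so that choosing $\th_0$ with $I(0)$ sufficiently large (achievable while keeping $M$ fixed) forces $I$ to escape to $+\infty$ in finite time, contradicting persistence of smoothness.

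The step I expect to be the main obstacle is the joint calibration of the two competing bounds: establishing the clean quadratic lower bound $\ge c\,I^2$ for the nonlocal transport term while simultaneously controlling the dissipation by something strictly sub-quadratic in $I$. These estimates must live in the same functional framework with matching scaling, and it is their balance — not either one in isolation — that pins the threshold at $\alpha=1/4$; any slack in the dissipation estimate would either shrink the admissible range of $\alpha$ or break the Riccati inequality outright. A secondary technical point is verifying that the sign and monotonicity properties used in the production lower bound genuinely persist under the evolution, so that the differential inequality remains valid all the way up to the blow-up time.
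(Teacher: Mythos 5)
Your overall architecture --- even, positive, peaked data; a singular weighted functional; a Riccati-type differential inequality in which the nonlocal transport term is quadratic and the dissipation is absorbable exactly when $\alpha<1/4$; plus persistence of sign and monotonicity --- is precisely the architecture of the paper's proof (and of the original Cordoba--Cordoba--Fontelos and Li--Rodrigo arguments). However, as written your functional cannot work. For the transport contribution to be quadratic in $I$, the homogeneous weight must be of the order $K(x)\sim|x|^{-1-\delta}$ with $\delta>0$; but then $I(t)=\int\theta(x,t)K(x)\,dx$ \emph{diverges} at $x=0$ for the positive peaked data you chose, and oddness (which would force $\theta(0,t)=0$ and cure the divergence) is, as you yourself observe, not preserved by the CCF flow. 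The missing idea is to measure the deviation from the moving maximum: the paper takes
$J(t)=\int_0^\infty\frac{\theta(0,t)-\theta(x,t)}{x^{1+\delta}}\,dx$ with $0<\delta<1-4\alpha$. Since $\theta$ is smooth and even, $\theta_x(0,t)=0$, so the integrand vanishes at $x=0$, $J$ is finite and controlled by $\|\theta\|_{C^1}$ (hence its divergence certifies a singularity), and the transport term evaluated at the peak drops out, leaving $J'(t)=-\int_0^\infty\frac{\theta_x H\theta}{x^{1+\delta}}\,dx$ plus the dissipative difference.

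The second, more serious gap is the quadratic lower bound itself, which you delegate to ``the Cotlar identity and the holomorphic structure'' without any derivation; this is where essentially all the mathematical content of the theorem lies, and the Cotlar identity is in fact never used in the paper. What is actually proved (Proposition~\ref{nonlinlp}, applied to $f=\theta(0,t)-\theta(\cdot,t)$) is the inequality $-\int_0^\infty \frac{f_x\,Hf}{x^{1+\delta}}\,dx \geq C_\delta\int_0^\infty\frac{f^2}{x^{2+\delta}}\,dx$, and the paper's elementary proof of it --- a representation of $Hf$ for even functions plus a dyadic good/bad-scale argument --- requires $f$ to be \emph{monotone} on $(0,\infty)$. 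Consequently Lemma~\ref{ccfmon}, the preservation of monotone decay, is not the ``secondary technical point'' you call it but a load-bearing ingredient. One further needs a Cauchy--Schwarz step (Lemma~\ref{fracestlem}, carrying an error $C(1+\|\theta_0\|_{L^\infty}^2)$ because the weight $x^{-\delta}$ is not integrable at infinity) to convert $\int_0^\infty f^2x^{-2-\delta}\,dx$ into $J^2$. For the dissipation, the paper does something cleaner than your near/far splitting: evenness and the composition identity $(-\Delta)^{\delta/2}(-\Delta)^{\alpha}=(-\Delta)^{\alpha+\delta/2}$ at $x=0$ show that the dissipative contribution to $J'$ equals $\int_0^\infty\frac{\theta(0,t)-\theta(x,t)}{x^{1+\delta+2\alpha}}\,dx$ \emph{exactly}, and a H\"older estimate absorbs this into an $\epsilon$-fraction of the production term precisely when $\delta<1-4\alpha$ --- this is where the threshold $\alpha<1/4$ enters, consistent with your ``bookkeeping'' remark. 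So your plan is the right plan, but without the corrected functional, the proved key inequality, and the monotonicity lemma, the Riccati inequality $J'\geq C_3J^2-C_4\bigl(1+\|\theta_0\|_{L^\infty}^2\bigr)$, and hence the blow up, is not established.
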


The blow up picture for the CCF equation is quite different from Burgers. If one takes positive, even data  with
a maximum value at $x=0,$ which is sufficiently large in an appropriate sense, then the solution develops a cusp
singularity at $x=0$ in finite time.

Finite time blow up for $\alpha =0$ was first proved by Cordoba, Cordoba and Fontelos \cite{CCF},
and the result was generalized to $1/4 >\alpha >0$ range by Li and Rodrigo. Both arguments are based
on an ingenious inequality: if $f \in C_0^\infty(\Rm),$ is even, $f(0)=0,$ and $Hf$ is the Hilbert transform of $f$
defined by \eqref{ccfas}, then for every $0<\delta<1,$ we have
\begin{equation}\label{htineq11}
-\int_0^\infty \frac{f_x(x)(Hf)(x)}{x^{1+\delta}}\,dx \geq C_\delta \int_0^\infty \frac{f(x)^2}{x^{2+\delta}}\,dx.
\end{equation}

Let us first sketch how one gets finite time blow up given \eqref{htineq11}.
\begin{proof}[Proof of Theorem~\ref{CCFbu}]
Let us take even, positive initial data $\theta_0(x)$
with maximum at $x=0.$ Assume, on the contrary, that the solution
stays smooth for all times.  Fix $\delta$ with $0<\delta<1-4\alpha.$ Consider
\[     J(t) \equiv \int_0^\infty \frac{\theta(0,t)-\theta(x,t)}{x^{1+\delta}}\,dx.     \]
Notice that if $\delta >0,$ blow up in $J(t)$ implies loss of regularity by $\th$ at $x=0.$
Then
\begin{equation}\label{Jest}J'(t) = -\int_0^\infty \frac{\theta_x(x,t)(H\theta)(x,t)}{x^{1+\delta}}\,dx
-\int_0^\infty \frac{((-\Delta)^\alpha \theta) (0,t)-((-\Delta)^\alpha \theta) (x,t)}{x^{1+\delta}}\,dx. \end{equation}
According to \eqref{htineq11},
\begin{equation}\label{htineq12}  -\int_0^\infty \frac{\theta_x(x,t)(H\theta)(x,t)}{x^{1+\delta}}\,dx \geq C_\delta \int_0^\infty
\frac{(\theta(0,t)-\theta(x,t))^2(x)}{x^{2+\delta}}\,dx. \end{equation}
Also, recall that (see e.g. \cite{CC}) if $0<\beta<1/2,$
\begin{equation}\label{fracder}
((-\Delta)^\beta f)(y) = P.V.\int_{-\infty}^\infty \frac{f(x)-f(y)}{x^{1+2\beta}}\,dx. \end{equation}
Obviously, $(-\Delta)^{\delta/2}((-\Delta)^\alpha f)|_{x=0} = (-\Delta)^{\alpha+\delta/2}f|_{x=0}.$ Applying this identity
and \eqref{fracder} to the last term in \eqref{Jest}, and using that $\theta$ is even, we obtain that
\begin{equation}\label{fracest11} \int_0^\infty \frac{((-\Delta)^\alpha \theta) (0,t)-((-\Delta)^\alpha \theta) (x,t)}{x^{1+\delta}}\,dx
= \int_0^\infty \frac{\theta(0,t)-\theta(x,t)}{x^{1+\delta+2\alpha}}\,dx. \end{equation}
Next, we need the following simple lemma.
\begin{lemma}\label{fracestlem}
Assume $\delta$ satisfies $0 <\delta < 1.$ Assume $f(x) \in C^1(\Rm^+) \cap L^\infty(\Rm^+)$ and $f(0)=0.$ Then
\begin{equation}\label{best21}
\int_0^\infty \frac{f(x)^2}{x^{2+\delta}}\,dx \geq C_1 \left( \int_0^\infty \frac{|f(x)|}{x^{1+\delta}}\,dx \right)^2 - C_2 (1+\|f\|^2_{L^\infty}).
\end{equation}
Also, for every $0 < \alpha < 1/4,$ $0<\delta <1-4\alpha,$ and for every $\epsilon >0,$ we have
\begin{equation}\label{best22}
\int_0^\infty \frac{f(x)}{x^{1+\delta+2\alpha}}\,dx \leq \epsilon \int_0^\infty \frac{f(x)^2}{x^{2+\delta}}\,dx + C_\epsilon (1+\|f\|_{L^\infty}).
\end{equation}
\end{lemma}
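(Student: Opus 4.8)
The plan is to prove both inequalities by splitting the half-line at a well-chosen cutoff and applying Cauchy--Schwarz, trading the favorable weight $x^{-(2+\delta)}$ in the quadratic integral against the weaker weights appearing in the linear integrals. For \eqref{best21}, I would fix a threshold $R>0$ and write $\int_0^\infty \tfrac{|f|}{x^{1+\delta}}\,dx = \int_0^R + \int_R^\infty$. On the tail $[R,\infty)$ the integrand is controlled using $|f|\le\|f\|_{L^\infty}$ and $\int_R^\infty x^{-1-\delta}\,dx = \delta^{-1}R^{-\delta}$, which contributes an $O(\|f\|_{L^\infty})$ term that I would absorb into the $C_2(1+\|f\|_{L^\infty}^2)$ error. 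On $[0,R]$, the key move is Cauchy--Schwarz applied with the splitting of the weight as $x^{-1-\delta} = \bigl(x^{-1-\delta/2}\bigr)\cdot\bigl(x^{-\delta/2}\bigr)$:
\begin{equation}\nonumber
\left( \int_0^R \frac{|f(x)|}{x^{1+\delta}}\,dx \right)^2 \le \left( \int_0^R \frac{f(x)^2}{x^{2+\delta}}\,dx \right)\left( \int_0^R \frac{dx}{x^{\delta}} \right),
\end{equation}
and since $\delta<1$ the last factor equals $(1-\delta)^{-1}R^{1-\delta}<\infty$. Rearranging gives a lower bound on the quadratic integral by $C\bigl(\int_0^R |f|x^{-1-\delta}\bigr)^2$, and combining with the tail estimate yields \eqref{best21}; choosing $R$ to be an absolute constant (say $R=1$) keeps all constants clean. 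The role of the hypothesis $f(0)=0$ together with $f\in C^1$ is to guarantee that $f(x)^2/x^{2+\delta}$ is integrable near the origin, so the right-hand side is finite and the manipulations are legitimate.

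For \eqref{best22} the strategy is the same Cauchy--Schwarz splitting, now designed to produce the quadratic integral with weight $x^{-(2+\delta)}$ on one side. I would write the weight as $x^{-(1+\delta+2\alpha)} = \bigl(x^{-(1+\delta/2)}\bigr)\cdot\bigl(x^{-(\delta/2+2\alpha)}\bigr)$ and again split at a cutoff. On $[0,R]$,
\begin{equation}\nonumber
\int_0^R \frac{f(x)}{x^{1+\delta+2\alpha}}\,dx \le \left( \int_0^R \frac{f(x)^2}{x^{2+\delta}}\,dx \right)^{1/2}\left( \int_0^R \frac{dx}{x^{\delta+4\alpha}} \right)^{1/2},
\end{equation}
and the second factor is finite precisely because $\delta+4\alpha<1$, which is exactly the hypothesis $0<\delta<1-4\alpha$. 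Then the elementary inequality $ab\le \epsilon a^2 + (4\epsilon)^{-1}b^2$ converts this into $\epsilon\int_0^R f^2 x^{-(2+\delta)} + C_\epsilon(\cdots)$, giving the first term on the right of \eqref{best22}. The tail $[R,\infty)$ is handled by $|f|\le\|f\|_{L^\infty}$ and convergence of $\int_R^\infty x^{-(1+\delta+2\alpha)}\,dx$, contributing an $O(\|f\|_{L^\infty})$ term absorbed into $C_\epsilon(1+\|f\|_{L^\infty})$.

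I do not expect a genuine obstacle here; the lemma is a pair of weighted Cauchy--Schwarz estimates, and the only point requiring care is the bookkeeping of where the constraints $\delta<1$ (for \eqref{best21}) and $\delta+4\alpha<1$ (for \eqref{best22}) are used — namely, to keep the auxiliary weight integrals $\int_0^R x^{-\delta}\,dx$ and $\int_0^R x^{-(\delta+4\alpha)}\,dx$ finite. The mildly delicate step is verifying that the additive error terms can all be bounded by $1+\|f\|_{L^\infty}$ (linearly) in \eqref{best22} rather than quadratically as in \eqref{best21}; this works because in \eqref{best22} the tail estimate is linear in $\|f\|_{L^\infty}$, whereas in \eqref{best21} squaring the linear integral before subtracting forces the quadratic $\|f\|_{L^\infty}^2$ error. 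Keeping the cutoff $R$ fixed and independent of $f$ ensures the constants $C_1,C_2,C_\epsilon$ depend only on $\delta,\alpha,\epsilon$, as required for the subsequent differential-inequality argument on $J(t)$.
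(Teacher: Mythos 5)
Your proposal is correct and follows exactly the route the paper indicates: the paper's entire proof is the remark that the lemma ``can be proved by application of H\"older inequality,'' and your cutoff-plus-Cauchy--Schwarz argument (with the weight splittings $x^{-1-\delta}=x^{-1-\delta/2}\cdot x^{-\delta/2}$ and $x^{-1-\delta-2\alpha}=x^{-1-\delta/2}\cdot x^{-\delta/2-2\alpha}$) is precisely that, with the constraints $\delta<1$ and $\delta+4\alpha<1$ entering where they should. You also correctly identify why the error term is quadratic in $\|f\|_{L^\infty}$ in \eqref{best21} but only linear in \eqref{best22}, which matches the statement.
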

The lemma can be proved by application of H\"older inequality. The $\alpha <1/4$ restriction comes from failure of \eqref{best22} to hold for any
$\delta>0$ if $\alpha \geq 1/4.$
Given \eqref{htineq11}, \eqref{fracest11}, \eqref{best21} and \eqref{best22}, \eqref{Jest} leads to
\[  J'(t) \geq C_3 J(t)^2 - C_4 (1+\|\theta_0\|^2_{L^\infty}), \]
leading to blow in $J(t)$ in finite time. But this can only happen if the solution $\theta(x,t)$ loses regularity at $x=0.$
\end{proof}

Finally, let us present the proof of \eqref{htineq11} which is less general than in \cite{CCF} but elementary and suffices for our application.
First, we need a monotonicity result.
\begin{lemma}\label{ccfmon}
Assume that the initial data $\theta_0(x)$ is smooth, bounded, positive, even, and monotone decaying on $(0,\infty).$ Then while the solution $\theta(x,t)$
remains smooth, it stays positive, even and monotone decaying on $(0,\infty).$
\end{lemma}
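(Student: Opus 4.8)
I would establish the three properties in the order evenness, positivity, monotonicity, since evenness is purely algebraic, positivity is a minimum principle, and monotonicity---the substantive part---requires a nonlocal maximum principle applied to $v=\th_x$. Throughout one works on the time interval where the solution is smooth, so all differentiations below are justified.

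\textbf{Evenness.} The key observation is that the Hilbert transform maps even functions to odd functions. Hence, if $\th(\cdot,t)$ is even then $H\th$ is odd, $\th_x$ is odd, so the product $(H\th)\th_x$ is even, while $(-\Delta)^\alpha\th$ is even as well. Consequently $\hat\th(x,t):=\th(-x,t)$ solves \eqref{CCFeq11} with the same even initial data, and by uniqueness of smooth solutions $\th(x,t)=\th(-x,t)$. In particular $\th_x(0,t)=0$ and $(H\th)(0,t)=0$ for all $t$, facts I will use below.

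\textbf{Positivity.} This follows from a minimum principle entirely analogous to the $L^\infty$ control already invoked in the paper. At a spatial minimum $x_*$ of $\th(\cdot,t)$ one has $\th_x(x_*)=0$, so the transport term drops out, while by \eqref{fracder} the numerator in $(-\Delta)^\alpha\th(x_*)$ is nonpositive, giving $-(-\Delta)^\alpha\th(x_*)\ge0$ and hence $\th_t(x_*)\ge0$. Thus $\inf_x\th(\cdot,t)$ is nondecreasing and stays $\ge\inf\th_0\ge0$. (Attainment of the infimum uses decay of the smooth solution; the case $\alpha=0$ follows directly from the transport structure along characteristics.)

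\textbf{Monotonicity (main part).} Differentiating \eqref{CCFeq11} in $x$, writing $v=\th_x$, $u=H\th$, and using $\partial_x(H\th)=Hv$, I obtain
\[ v_t = u\,v_x + (Hv)\,v -(-\Delta)^\alpha v. \]
Here $v$ is odd with $v(0,t)=0$, and I must show that $v\le0$ on $(0,\infty)$ is preserved. I would argue by first breakthrough: were it to fail, there would be a first time $t_1$ and an interior point $x_0>0$ with $v(x_0,t_1)=0=\max_{x\ge0}v(\cdot,t_1)$ while $v\le0$ on $(0,\infty)$ for $t<t_1$. At $(x_0,t_1)$ the condition $v_x=0$ kills the transport term and $v=0$ kills $(Hv)v$, so $v_t(x_0,t_1)=-(-\Delta)^\alpha v(x_0,t_1)$. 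Using $v(x_0)=0$ and oddness of $v$,
\[ (-\Delta)^\alpha v(x_0)= -c_\alpha\,P.V.\!\int_{\Rm}\frac{v(y)}{|x_0-y|^{1+2\alpha}}\,dy = -c_\alpha\int_0^\infty v(y)\Big[\frac{1}{|x_0-y|^{1+2\alpha}}-\frac{1}{(x_0+y)^{1+2\alpha}}\Big]\,dy. \]
For $y,x_0>0$ we have $|x_0-y|\le x_0+y$, so the bracket is nonnegative; since $v(y)\le0$, the integral is $\le0$, and strictly $<0$ unless $v\equiv0$. Hence $v_t(x_0,t_1)<0$, which forces $v(x_0,t)>0$ for $t$ slightly less than $t_1$, contradicting $v\le0$ on $(0,\infty)$ for $t<t_1$. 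Therefore monotone decay on $(0,\infty)$ is preserved.

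\textbf{Main obstacle.} The crux is pinning down the sign of the nonlocal dissipation at the breakthrough point, where the oddness of $v$ together with the elementary inequality $|x_0-y|\le x_0+y$ does all the work; without the symmetry, $x_0$ would fail to be a global extremum and the sign of $(-\Delta)^\alpha v$ would be unclear. The remaining technical points are the attainment of extrema on the whole line (guaranteed by decay of the smooth solution) and upgrading the nonstrict dissipation estimate to the strict inequality needed for the contradiction, which is standard (e.g.\ a perturbation by $\epsilon t$ or the argument of \cite{CCF}). The purely conservative case $\alpha=0$ is instead handled by characteristics, the flow fixing $x=0$ by evenness and hence mapping $(0,\infty)$ to itself in an order-preserving way.
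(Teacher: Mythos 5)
Your proposal is correct and follows essentially the same route as the paper: differentiate the equation to get the evolution of $\theta_x$, and at a first breakthrough point $x_1>0$ observe that the transport and stretching terms vanish (since $\theta_x=0$ and $\theta_{xx}=0$ there) while the fractional dissipation acquires a strict sign from the oddness of $\theta_x$, yielding the contradiction. You are in fact somewhat more explicit than the paper's sketch, which cites \cite{CC} for positivity, calls evenness ``easy to check,'' and leaves the dissipation sign to ``consider \eqref{fracder}''; your kernel comparison $|x_0-y|\le x_0+y$ and the separate characteristics argument for $\alpha=0$ (where the dissipative sign argument is unavailable) are welcome details, not a different method.
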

\begin{proof}
We only need to comment on the preservation of the decay property. The boundedness and positivity follow from the standard maximum principle (e.g. \cite{CC}),
while the preservation of evenness is easy to check. The equation for the derivative of $\theta$ is given by
\begin{equation}\label{mpder12}
\partial_t(\theta_x) = (H\theta)_x \theta_x +  H\theta (\theta)_{xx} - (-\Delta)^\alpha \theta_x, \,\,\,\theta_x(0,t)=0,\,\,\,\forall \,t.
\end{equation}
Now the property $\theta_x(x,0)>0$ for $x>0$ is preserved for all times by an argument similar to the usual fractional diffusion maximum principle (see e.g. \cite{CC}).
Informally, if $t_1$ is the first time and $x_1>0$ is a point where $\theta_x(x_1,t_1)=0,$ then the first two terms on the right hand side of \eqref{mpder12}
vanish at $x_1,$ while the last term gives a positive contribution (consider \eqref{fracder}).
\end{proof}

The proof of \eqref{htineq11} that we present below works only for functions that are positive, even, and monotone decaying on $(0,\infty).$ However, due to Lemma~\ref{ccfmon},
these properties are preserved by evolution. Therefore, we can prove the finite time blow up using the same argument as before.
The proposition below should be applied to $\theta(0,t)-\theta(x,t) = f(x),$ with $p=1$ and $\sigma = 1+\delta,$ $0<\delta<1.$

\begin{proposition}\label{nonlinlp}
Assume that function $f(x)$ is $C^1,$ even, $f'(x)\geq 0$ for $x >
0,$ $f$ is bounded on $\Rm,$ and $f(0)=0.$ Then
\begin{equation}\label{mainnonlin}
-\int_0^1 \frac{Hf(x) f'(x) f(x)^{p-1}}{x^{\sigma}}\,dx \geq C_0
\int_0^1 \frac{f(x)^{p+1}}{x^{1+\sigma}}\,dx,
\end{equation}
for any $p \geq 1$ and any $\sigma>0.$ The constant $C_0$ may
depend only on $p$ and $\sigma.$ If the right hand side of
\eqref{mainnonlin} is infinite, the inequality is understood in
the sense that the left hand side must also be infinite.
\end{proposition}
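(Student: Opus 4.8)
The plan is to reduce everything to the symmetry of the Hilbert transform on even functions and then to a nonnegative double integral. First I would record the representation valid for even $f$ and $x>0$:
\[ -Hf(x)=\frac{2x}{\pi}\,\mathrm{P.V.}\!\int_0^\infty\frac{f(y)-f(x)}{y^2-x^2}\,dy, \]
using that the principal value of $\int_0^\infty(y^2-x^2)^{-1}\,dy$ vanishes, so subtracting $f(x)$ inside the integral is free. Since $f$ is increasing on $(0,\infty)$, the integrand is nonnegative on both $y<x$ and $y>x$ (numerator and denominator change sign together); this simultaneously fixes the sign, $-Hf\geq 0$ on $(0,\infty)$, consistent with the claim, and shows the left side of \eqref{mainnonlin} is nonnegative. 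The degenerate case in which the right side is infinite I would treat at the end by monotone truncation, verifying that the left side then diverges as well.

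Next I would insert this representation into the left side of \eqref{mainnonlin}, obtaining for $x\in(0,1)$ a double integral in $(x,y)$ against the weight $x^{1-\sigma}f'(x)f(x)^{p-1}$. Because $f$ is increasing, the contribution of $y>1$ is manifestly nonnegative and may be discarded; the left side is thus bounded below by the principal-value integral over $(0,1)^2$. I would then symmetrize in $x\leftrightarrow y$: the denominator is antisymmetric while the diagonal singularity of $\tfrac{f(y)-f(x)}{y^2-x^2}$ is removable, so this principal-value integral equals
\[ \frac{1}{\pi}\iint_{(0,1)^2}\frac{f(y)-f(x)}{y^2-x^2}\Big[x^{1-\sigma}f'(x)f(x)^{p-1}+y^{1-\sigma}f'(y)f(y)^{p-1}\Big]\,dx\,dy, \]
a genuine (no principal value needed) integral of a nonnegative integrand.

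It remains to bound this symmetric integral below by $C_0\int_0^1 f(x)^{p+1}x^{-1-\sigma}\,dx$, and this is where the real work, and the main obstacle, lies. Here I would exploit monotonicity to replace the difference quotient $\tfrac{f(y)-f(x)}{y^2-x^2}$ on a suitable region (restricting one variable to a fixed multiple of the other, or to $y<x$) by an expression controlled by the dyadic increments of $f$, and then use $f'f^{p-1}=\tfrac1p(f^p)'$ together with $f(0)=0$ to integrate by parts and reassemble $\int f^{p+1}x^{-1-\sigma}$. The hypotheses that $f$ is $C^1$ and even (so $f'(0)=0$ and $f$ vanishes faster than linearly at the origin) and bounded are precisely what keeps the constant from degenerating and makes the weights match; extracting a clean constant depending only on $p$ and $\sigma$ from the competition between the near-diagonal and the cross-scale contributions is the delicate point. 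Specializing to $p=1$ and $\sigma=1+\delta$ then recovers \eqref{htineq11}.
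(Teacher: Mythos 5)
Your reduction steps are correct: the representation $-Hf(x)=\frac{2x}{\pi}\,\mathrm{P.V.}\!\int_0^\infty\frac{f(y)-f(x)}{y^2-x^2}\,dy$ for even $f$, the resulting positivity of $-Hf$ on $(0,\infty)$ for monotone $f$, the discarding of the $y>1$ region, and the symmetrization are all sound. (In fact, from the same representation you could recover the paper's key pointwise bound, Corollary~\ref{htcor}: restricting the $y$-integration to $(qx,\infty)$ gives $-Hf(x)\geq \frac1\pi\log\bigl(\tfrac{q+1}{q-1}\bigr)\,\bigl(f(qx)-f(x)\bigr)$.) But what you call ``the real work'' is the entire content of the proposition, and you have not done it. The obstruction is concrete: a monotone $f$ can be essentially constant across arbitrarily many consecutive $q$-adic scales, so the weight $f'(x)f(x)^{p-1}$ in your symmetric integrand vanishes there, while $\int_0^1 f^{p+1}x^{-1-\sigma}\,dx$ still receives a full geometric contribution from those scales. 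No pointwise or near-diagonal comparison can close this gap; one must show that the scales where $f$ does increase appreciably carry enough weight to pay for all the flat scales in between, with a constant depending only on $p$ and $\sigma$.

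That multi-scale accounting is precisely what the paper's proof consists of: after the pointwise bound of Corollary~\ref{htcor}, it discretizes at scales $q^{-n}$, sets $a_n=f(q^{-n})$, calls $n$ ``good'' when $a_n-a_{n+1}\geq c\,a_n$, proves (Lemma~\ref{goodnlemma}) that if there are only finitely many good $n$ then both sides of \eqref{mainnonlin} are infinite --- which is also where the degenerate case of the statement is genuinely handled; your ``monotone truncation'' remark does not obviously work, since truncating $f$ changes $Hf$ nonlocally --- and then sums the bad intervals against the nearest good scale via a geometric series controlled by the choice $(1-c)^{-(p+1)}q^{-\sigma}<1$. Nothing in your sketch (``replace the difference quotient by dyadic increments, integrate by parts, reassemble'') substitutes for this; it restates the goal rather than giving an argument. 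A small side point: your claim that $f'(0)=0$ from evenness ``is precisely what keeps the constant from degenerating'' is also off the mark; the paper's constant comes from the interplay of $q$, $c$, $p$, $\sigma$ alone, and evenness is used only in the Hilbert-transform representation.
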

\it Remarks. \rm 1. In \cite{CCF}, the authors show the inequality
\eqref{mainnonlin} in the case of $p=1,$ $1<\sigma<2$ and general
even (not necessarily monotone) $f.$ We do not know if our
inequality still holds in this broader generality. \\
2. In our application, divergent integrals in \eqref{mainnonlin}
imply that blow up has already happened. Thus we do not need to
consider this case but include it for completeness. \\

The first step is the following
\begin{lemma}\label{htest}
If $f(x) \in C^1$ and is even, the following representation holds:
\begin{equation}\label{htesteq}
Hf(x) = \int_0^{2x} \log \left| \frac{y-x}{x} \right| f'(y)\,dy +
\int_x^\infty \frac{f(y-x)-f(y+x)}{y}\,dy.
\end{equation}
\end{lemma}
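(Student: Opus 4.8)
The plan is to obtain the identity directly from the singular-integral definition of $Hf$ by three moves: fold the integral onto the half-line using evenness, peel off the second term on the right-hand side of \eqref{htesteq} by elementary substitutions, and reduce what remains to a single principal-value integral on the compact interval $(0,2x)$ that integrates by parts into the first term.

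First I would write, with the convention $Hf(x)=\mathrm{P.V.}\int_{-\infty}^\infty \frac{f(y)}{x-y}\,dy$ (the normalizing $1/\pi$ of \eqref{ccfas} merely rescales the whole identity, so I suppress it here), and use $f(-y)=f(y)$ to fold:
\[
Hf(x)=\mathrm{P.V.}\int_0^\infty f(y)\Big(\frac{1}{x-y}+\frac{1}{x+y}\Big)\,dy .
\]
Next I would massage the \emph{second} right-hand term of \eqref{htesteq}: the substitutions $u=y-x$ and $v=y+x$ give, calling the quantity $II$,
\[
II=\int_x^\infty \frac{f(y-x)-f(y+x)}{y}\,dy=\int_0^\infty\frac{f(y)}{y+x}\,dy-\int_{2x}^\infty\frac{f(y)}{y-x}\,dy ,
\]
where splitting is legitimate only under a common cutoff, since $\tfrac{1}{y+x}-\tfrac{1}{y-x}=-\tfrac{2x}{y^2-x^2}=O(y^{-2})$ makes the combination converge even though the two halves diverge separately. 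Subtracting, the $\int_0^\infty f/(x+y)$ contributions cancel, then $\mathrm{P.V.}\int_0^\infty f/(x-y)=-\mathrm{P.V.}\int_0^\infty f/(y-x)$ and the tail $\int_{2x}^\infty f/(y-x)$ cancels part of it, leaving the clean reduction
\[
Hf(x)-II=-\,\mathrm{P.V.}\int_0^{2x}\frac{f(y)}{y-x}\,dy .
\]

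It then remains to identify this compact integral with the first term $I=\int_0^{2x}\log\big|\tfrac{y-x}{x}\big|\,f'(y)\,dy$. I would integrate by parts using $\tfrac{d}{dy}\log|y-x|=\tfrac{1}{y-x}$; the endpoint contributions produce $-(f(2x)-f(0))\log x$, which is exactly what the $-\log x\int_0^{2x}f'$ piece inside $I$ generates, so the $\log x$ terms reorganize and match with no need to assume $f(0)=0$ (that hypothesis enters only later, in Proposition~\ref{nonlinlp}). This yields $I=Hf(x)-II$, which is precisely \eqref{htesteq}.

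The one genuinely delicate point is the bookkeeping of the principal value, and I expect that to be the main obstacle. Two cancellations must be justified carefully: in the integration by parts the boundary contribution of $\log|y-x|$ at $y=x^{\pm}$ is logarithmically divergent on each side but cancels in the symmetric P.V. limit because $f$ is continuous at $x$; and the far-field pieces must be combined under one cutoff $R$ and sent to infinity together, since the individual half-line integrals $\int_0^\infty f/(y\pm x)$ diverge logarithmically while only their specific combination converges (ensured here by boundedness and monotonicity of $f$, as in the application). Carrying out every manipulation with an explicit regularization $\varepsilon<|y-x|$, $|y|<R$ and passing to the limit at the end makes each step legitimate, and the surviving algebra is exactly that displayed above.
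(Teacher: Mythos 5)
Your proof is correct and follows essentially the same route as the paper, whose entire proof reads ``Direct computation using integration by parts for the first term'': your write-up is precisely that computation carried out in detail (folding by evenness, substitutions to expose the second term, integration by parts against $\log|y-x|$ to produce $-\mathrm{P.V.}\int_0^{2x} f(y)/(y-x)\,dy$). One microscopic correction: the vanishing of the endpoint terms $\log\varepsilon\,\bigl(f(x-\varepsilon)-f(x+\varepsilon)\bigr)$ at the singular point is not guaranteed by continuity alone, but it does follow from the $C^1$ hypothesis you have in force, so the argument stands.
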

\begin{proof}
Direct computation using integration by parts for the first term.
\end{proof}

\begin{corollary}\label{htcor}
 Suppose that $f(x)$ satisfies the
assumptions of Proposition~\ref{nonlinlp}. Then for any $1<q<2$ we
have \begin{equation}\label{htest11} Hf(x) \leq \log(q-1)
\int_{q^{-1}x}^{qx}f'(y)\,dy = \log(q-1) (f(qx)-f(q^{-1}x)).
\end{equation}
\end{corollary}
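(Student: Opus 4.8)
The plan is to feed the representation \eqref{htesteq} of Lemma~\ref{htest} into the desired bound and then discard every contribution that already carries the correct sign. Writing $Hf(x)$ as the sum of the two integrals in \eqref{htesteq}, I would first dispose of the tail term $\int_x^\infty \frac{f(y-x)-f(y+x)}{y}\,dy$. For $y>x>0$ both arguments $y-x$ and $y+x$ are nonnegative with $y+x>y-x$; since $f$ is even with $f'\geq 0$ on $(0,\infty)$, it is nondecreasing on $[0,\infty),$ so $f(y+x)\geq f(y-x)$ and the integrand is $\leq 0.$ This term is therefore nonpositive and may simply be dropped, leaving
\[ Hf(x) \leq \int_0^{2x} \log\left|\frac{y-x}{x}\right| f'(y)\,dy. \]

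Next I would analyze the remaining integral. The key observation is that the weight $\log\bigl|\frac{y-x}{x}\bigr|$ is nonpositive throughout $(0,2x),$ because $|y-x|<x$ there, while $f'(y)\geq 0$ on the same interval. Hence the integrand is nonpositive on all of $(0,2x),$ so restricting the domain of integration to any subinterval can only increase the value of the integral. I would restrict to $[q^{-1}x,qx],$ which is contained in $(0,2x)$ precisely because $1<q<2$ (so $qx<2x$ and $q^{-1}x>0$), giving
\[ Hf(x) \leq \int_{q^{-1}x}^{qx} \log\left|\frac{y-x}{x}\right| f'(y)\,dy. \]

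To conclude I would bound the logarithmic weight by the constant $\log(q-1)$ on this subinterval. For $y\in[q^{-1}x,qx]$ one has $q^{-1}-1 \leq \frac{y-x}{x}\leq q-1,$ and since $1-q^{-1}=\frac{q-1}{q}<q-1$ for $q>1,$ it follows that $\bigl|\frac{y-x}{x}\bigr|\leq q-1,$ whence $\log\bigl|\frac{y-x}{x}\bigr|\leq \log(q-1)<0.$ Multiplying this pointwise inequality by the nonnegative factor $f'(y)$ and integrating over $[q^{-1}x,qx]$ yields $\log(q-1)\int_{q^{-1}x}^{qx} f'(y)\,dy = \log(q-1)\bigl(f(qx)-f(q^{-1}x)\bigr),$ which is exactly \eqref{htest11}.

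The only obstacle, and it is a mild one, is the bookkeeping at the asymmetric endpoints: the interval $[q^{-1}x,qx]$ is not symmetric about $x,$ so one must verify that the left endpoint still keeps $|y-x|/x$ below $q-1,$ which works only because $1-q^{-1}<q-1.$ The genuinely structural inputs are the sign of the tail integral and the sign of the logarithmic weight, both of which rest on the evenness and monotonicity of $f$ guaranteed by Lemma~\ref{ccfmon} for the solutions we care about; the integrable logarithmic singularity at $y=x$ poses no difficulty since the integrand there is nonpositive.
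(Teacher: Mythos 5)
Your proof is correct and takes essentially the same approach as the paper: the paper's own proof of this corollary is the one-line remark that it ``follows immediately from \eqref{htesteq} and monotonicity of $f(x)$ for $x>0$.'' Your argument---dropping the tail integral because monotonicity and evenness make it nonpositive, restricting the nonpositive logarithmic integrand to $[q^{-1}x,qx]\subset(0,2x)$, and bounding the weight there by $\log(q-1)$ (using $1-q^{-1}<q-1$)---is exactly the verification the paper leaves implicit.
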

\begin{proof}
Follows immediately from \eqref{htesteq} and monotonicity of $f(x)$
for $x>0.$
\end{proof}

\begin{proof}[Proof of Proposition~\ref{nonlinlp}]
Fix a number $q,$ $1<q<2.$ Also fix $c>0$ such that
$(1-c)^{-p-1}q^{-\sigma}<1.$ Corollary~\ref{htcor} reduces the
proof to the proof of the following inequality
\begin{equation}\label{mainineq1}
\int_0^1 \frac{f'(x) (f(qx)-f(q^{-1}x))f(x)^{p-1}}{x^{\sigma}}\,dx
\geq C_0 \int_0^1 \frac{f(x)^{p+1}}{x^{1+\sigma}}\,dx
\end{equation}
with some $C_0>0$ which may depend only on $p$ and $\sigma.$ Split
integration on both sides of the inequality into intervals
$[q^{-n-1},q^{-n}],$ and set $a_n \equiv f(q^{-n}).$ Notice that
\begin{equation} s_n \equiv \int_{q^{-n-1}}^{q^{-n}}\frac{f'(x)
(f(q x)-f(q^{-1}x))f(x)^{p-1}}{x^{\sigma}}\,dx \geq
(a_n-a_{n+1})\int_{q^{-n-1}}^{q^{-n}}\frac{f'(x)
f(x)^{p-1}}{x^{\sigma}}\,dx. \label{rhest3}
\end{equation}
Let us integrate by parts in the last integral in \eqref{rhest3}:
\begin{equation}\label{intest17}
\int_{q^{-n-1}}^{q^{-n}}\frac{f'(x) f(x)^{p-1}}{x^{\sigma}}\,dx =
\frac1p (a_n^p - a_{n+1}^p) q^{\sigma n} + \frac{\sigma}{p}
\int_{q^{-n-1}}^{q^{-n}}
\frac{f(x)^p-a_{n+1}^p}{x^{1+\sigma}}\,dx.
\end{equation}
Also, \begin{equation}\label{rhsest14} r_n \equiv
\int_{q^{-n-1}}^{q^{-n}} \frac{f(x)^{p+1}}{x^{1+\sigma}}\,dx \leq
a_n^{p+1}q^{\sigma(n+1)}.
\end{equation}
Let us call $n$ "good" if $a_n-a_{n+1} \geq ca_n$ (recall $c$ is
such that $(1-c)^{-p-1}q^{-\sigma}<1$), and "bad" otherwise.
\begin{lemma}\label{goodnlemma}
If the set of all good $n$ is finite, then both sides of
\eqref{mainnonlin} are infinite.
\end{lemma}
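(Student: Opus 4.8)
The plan is to exploit the hypothesis that only finitely many $n$ are good in order to force the sequence $a_n = f(q^{-n})$ to decay at most geometrically, and then to play this geometric lower bound against the weight $q^{n\sigma}$, which by the choice of $c$ grows faster. First I would fix $N_0$ so large that every $n \ge N_0$ is bad, i.e. $a_n - a_{n+1} < c\,a_n$, equivalently $a_{n+1} > (1-c)a_n$. Iterating this across the bad tail gives $a_n \ge (1-c)^{n-N_0} a_{N_0}$ for all $n \ge N_0$. One first checks $a_{N_0}>0$: if $a_m = 0$ for some $m \ge N_0$, then $a_n = 0$ for all $n \ge m$ (the $a_n$ are nonnegative and nonincreasing), so $a_n - a_{n+1} = 0 \ge c\,a_n$ and those $n$ would be good, contradicting finiteness of the good set. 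Hence $a_n \ge C_1 (1-c)^n$ for some $C_1>0$, a genuine geometric lower bound. Recall $c$ was chosen so that $(1-c)^{-p-1}q^{-\sigma}<1$, i.e. $(1-c)^{p+1}q^{\sigma}>1$; this strict inequality is the engine of the whole argument.

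For the right-hand side I would bound $r_n$ from below directly. On $[q^{-n-1},q^{-n}]$ one has $f(x)\ge a_{n+1}$ and $x^{-1-\sigma}\ge q^{n(1+\sigma)}$, while the interval has length $q^{-n}(1-q^{-1})$, so $r_n \ge (1-q^{-1})\,a_{n+1}^{p+1} q^{n\sigma}$. Inserting the geometric lower bound gives $r_n \ge C\,[(1-c)^{p+1}q^{\sigma}]^{n}$ with base strictly larger than one, whence $\sum_n r_n=\infty$; this shows the right-hand side of \eqref{mainnonlin} is infinite.

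The genuine difficulty is the left-hand side, since there is no useful pointwise lower bound for individual $s_n$ (the increment $a_n - a_{n+1}$ may be tiny for particular $n$). I would start from \eqref{rhest3} and \eqref{intest17}, retaining only the nonnegative leading term, to get $\int_{q^{-n-1}}^{q^{-n}} x^{-\sigma} f' f^{p-1}\,dx \ge \tfrac{1}{p}(a_n^p - a_{n+1}^p) q^{n\sigma}$, and then use convexity of $t\mapsto t^p$ (so $a_n^p - a_{n+1}^p \ge p\,a_{n+1}^{p-1}(a_n - a_{n+1})$) to obtain $s_n \ge (a_n - a_{n+1})^2 a_{n+1}^{p-1} q^{n\sigma}$. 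The main obstacle is then to show $\sum_n (a_n - a_{n+1})^2 a_{n+1}^{p-1} q^{n\sigma}=\infty$ despite the loss of two powers of the increment. I would resolve this by a tail Cauchy--Schwarz argument: writing $d_k = a_k - a_{k+1}$ and setting $w_k = (1-c)^{(p-1)k} q^{k\sigma}$ (which absorbs the factor $a_{k+1}^{p-1}\ge C(1-c)^{(p-1)k}$), Cauchy--Schwarz gives $\big(\sum_{k\ge n} d_k\big)^2 \le \big(\sum_{k\ge n} d_k^2 w_k\big)\big(\sum_{k\ge n} w_k^{-1}\big)$. Since $(1-c)^{p+1}q^{\sigma}>1$ and $(1-c)<1$ force $(1-c)^{-(p-1)}q^{-\sigma}<1$, the reciprocal weights sum geometrically, $\sum_{k\ge n} w_k^{-1} \le C\,w_n^{-1}$, while $\sum_{k\ge n} d_k = a_n \ge C_1(1-c)^n$ because $a_k\to 0$. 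Rearranging yields $\sum_{k\ge n} d_k^2 w_k \ge c'\,w_n (1-c)^{2n} = c'\,[(1-c)^{p+1}q^{\sigma}]^{n}\to\infty$; a convergent series cannot have tails tending to infinity, so $\sum_n s_n=\infty$ and the left-hand side of \eqref{mainnonlin} is infinite as well. Combining the two halves gives the lemma, with \eqref{mainnonlin} understood in the ``both sides infinite'' sense permitted by the statement, and the reduction to \eqref{mainineq1} via Corollary~\ref{htcor} carrying this conclusion back to the original integral.
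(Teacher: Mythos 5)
Your proof is correct, and its skeleton coincides with the paper's: both exploit the geometric lower bound $a_n \geq C(1-c)^n$ forced by an all-bad tail, both get divergence of the right-hand side from $r_n \geq (1-q^{-1})a_{n+1}^{p+1}q^{n\sigma} \geq C\left[(1-c)^{p+1}q^{\sigma}\right]^n \to \infty$, and both rest the left-hand side on the bound $s_n \geq (a_n-a_{n+1})^2 a_{n+1}^{p-1}q^{\sigma n}$ obtained from \eqref{rhest3}, \eqref{intest17} and convexity of $t \mapsto t^p$. Where you genuinely depart from the paper is the mechanism for concluding $\sum_n s_n = \infty$ despite the two powers of the possibly tiny increment $d_n = a_n - a_{n+1}$. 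The paper writes $d_n = c_n a_n$, observes that $a_n \to 0$ forces $\prod_{n \geq N}(1-c_n)=0$ and hence $\sum_n c_n = \infty$, and then argues by contradiction: if $\sum_n s_n$ converged, then $s_n \geq c_n^2\, C\lambda^n$ with $\lambda = (1-c)^{p+1}q^{\sigma}>1$ would force $c_n \lesssim \lambda^{-n/2}$, which is summable. Your route is instead a weighted tail Cauchy--Schwarz: $a_n^2 = \bigl(\sum_{k\geq n} d_k\bigr)^2 \leq \bigl(\sum_{k\geq n} d_k^2 w_k\bigr)\bigl(\sum_{k\geq n} w_k^{-1}\bigr)$, with the verification $(1-c)^{-(p-1)}q^{-\sigma} \leq (1-c)^2 < 1$ making the reciprocal weights geometrically summable, so that the tails of $\sum_k d_k^2 w_k$ grow like $\left[(1-c)^{p+1}q^{\sigma}\right]^n$, incompatible with convergence. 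Both arguments exploit the same tension (increments summing to $a_n \geq C_1(1-c)^n$ against weights growing like $(1-c)^{(p-1)n}q^{n\sigma}$), but yours is more quantitative, giving an explicit divergence rate for the tail sums rather than a pure contradiction; it also makes explicit a point the paper leaves implicit, namely that $a_{N_0}>0$ (otherwise every large $n$ would be good), which is needed both for your geometric lower bound and, in the paper's version, for the infinite-product step.
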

\begin{proof}
Assume that the set of good $n$ is finite. Then there exists $N$
such that for all $n>N,$ we have $a_{n+1}>(1-c)a_n.$ But then
\[ r_n \geq a_{n+1}^{p+1}q^{n(1+\sigma)}(q^{-n}-q^{-n-1}) \geq
(1-q^{-1})a_N^{p+1}(1-c)^{(n+1-N)(p+1)}q^{n(1+\sigma)} \rightarrow
\infty \] as $n \rightarrow \infty$ by our choice of $c.$ So the
right hand side of \eqref{mainnonlin} diverges.

To show that the left hand side diverges as well, let $c_n$ be
such that $a_n-a_{n+1} = c_n a_n,$ $c_n <c$ if $n>N.$ Since $a_n
\rightarrow 0$ as $n \rightarrow \infty,$ we must have
$\prod_{n=N}^\infty (1-c_n) =0,$ implying $\sum_n c_n = \infty.$
On the other hand, \[ s_n \geq (a_n-a_{n+1})^2
a_{n+1}^{p-1}q^{\sigma n} \geq c_n^2 a_{n+1}^{p+1}q^{\sigma n}. \]
Under our assumptions, the expression $a_{n+1}^{p+1}q^{\sigma n}$
is bounded from below by increasing geometric progression. This
along with $\sum_n c_n = \infty$ implies easily that $\sum_n s_n$
also diverges.
\end{proof}

Thus we can assume that the set of good $n$ is infinite. If $n$ is
good, then, by \eqref{rhest3}, \eqref{intest17}, \eqref{rhsest14}
and monotonicity of $f(x),$
\[ s_n \geq \frac{ca_n}{p} (a_n^p -a_{n+1}^p)q^{\sigma n} \geq
\frac{c^2}{p} a_n^{p+1}q^{\sigma n} \geq \frac{c^2}{p
q^\sigma}r_n. \] Suppose that $(n_{j-1},n_j)$ is an interval of
bad $n,$ while $n_{j-1}$ and $n_j$ are good. Since for bad $n$ we
have $a_n < \frac{1}{1-c}a_{n+1},$ for every $n \in (n_{j-1},n_j)$
the following estimate holds:
\[ r_n \leq a_n^{p+1}q^{\sigma (n+1)} \leq
\left(\frac{1}{1-c}\right)^{(n_j-n)(p+1)}q^{\sigma
(n-n_j+1)}a_{n_j}^{p+1}q^{\sigma n_j} \leq \frac{p q^\sigma}{c^2}
\left(\frac{1}{(1-c)^{p+1}q^\sigma}\right)^{n_j-n}s_{n_j}. \] Thus
\[ \sum_{n=n_{j-1}+1}^{n_j} r_n \leq \frac{p q^\sigma}{c^2}s_{n_j}
\sum_{k=0}^{n_j-n_{j-1}}\left(\frac{1}{(1-c)^{p+1}q^\sigma}\right)^k
\leq \frac{p (1-c)^{p+1}q^{2\sigma}}{c^2((1-c)^{p+1}q^\sigma-1)}
s_{n_j}. \] Summing over all $n$ and estimating all "bad"
intervals as above, we obtain \eqref{mainnonlin}.
\end{proof}

\vspace*{0.5cm}
\setcounter{equation}{0}
\section{Conclusions and some open problems}

In this review, we focused on the questions of global existence and regularity or blow up for
active scalars. These equations constitute a natural class of models exhibiting
rich and intricate behaviors, yet appearing more approachable than, for example,
three dimensional Euler or Navier-Stokes equations. There is a number of interesting open problems,
varying in difficulty, that we discussed in this review. For the convenience of the reader, we list some of these
problems below. \\

1. Prove global regularity or finite time blow up for CCF equation in $1/4 \leq \alpha < 1/2$ regime. \\

The result would be more interesting if it is global regularity. In this case, there must be some nonlinearity
depletion in the equation that is not captured by scaling. This would be a step towards understanding this phenomena
better and trying to project this understanding to more complex equations, such as for example SQG. \\

2. Prove or disprove uniqueness of solutions to Burgers or SQG equation with rough ($L^p$) initial data. \\

This question sounds technical, but the issues one has to overcome are likely to provide better insight into the
inner workings of these equations, on a sufficiently subtle level. \\

3. Prove global regularity of solutions or finite time blow up for the SQG equation in the regime $0 \leq \alpha < 1/2.$ \\

This is a major open problem that has attracted much attention over the recent years, but progress towards this goal has
been very limited. Major new ideas will likely be needed. \\

As far as trying to prove finite time blow up for the SQG equation, a more modest and reasonable goal is to show infinite
growth of gradient or other higher order Sobolev norm of the solution. No such results are known for the SQG equation.
Thus we can ask the following natural question. \\

4. Build examples where solutions of the SQG equation show infinite growth of gradient or other higher order Sobolev norm as time goes to infinity. \\

The solutions of the SQG equation are often quite unstable, and thus difficult to control. To prove infinite growth, one has to construct
solutions with "stable instability".     \\

The question about infinite growth in time of the gradient of solution is also very interesting for 2D Euler equation in vorticity form, the most classical example of
active scalar. One has upper bound by double exponential, a consequence of the conservation of $\|\theta\|_{L^\infty}$ (see e.g. \cite{BM}).
From the opposite side, the best known result is superlinear growth due to Denisov \cite{Den1} (see also earlier works by Yudovich \cite{Y,Y1} and Nadirashvili \cite{Nad}).
Hence we finish our list with perhaps the longest-open question.  \\

5. Close or shrink the gap between the upper bounds and actual growth in examples of the gradient of vorticity (or some other higher order Sobolev norm)
of the solutions of 2D Euler equation. \\

\vspace*{0.5cm}

\section*{Acknowledgements}
This work has been partially supported by the
NSF-DMS grant 0653813. The author expresses his gratitude to the
Department of mathematics of the University of Chicago, where part of this
work was carried out.



\begin{thebibliography}{99}

\bibitem{BLM} G.R.~Baker, X.~Li and A.C.~Morlet, \textit{Analytic structure of 1D-transport
equations with nonlocal fluxes}. Physica D, 91 (1996), 349--375.

\bibitem{BM} A.~Bertozzi and A.~Majda. Vorticity and
Incompressible Flow. Cambridge University Press, 2002.

\bibitem{BSS} K.~Bogdan, A.~Sto\'s and P.~Sztonyk. \textit{Harnack inequality for
stable processes on $d$-sets},  Studia Math.,  158 (2003), 163--198.

\bibitem{CV} L.~Caffarelli and A.~Vasseur. \textit{Drift diffusion
equations with fractional diffusion and the quasi-geostrophic
equation}. \rm Preprint arXiv:math$/$0608447.

\bibitem{CarFer} J.~Carrillo and L.~Ferreira. \textit{The asymptotic behaviour
of subcritical dissipative quasi-geostrophic equations}.
Nonlinearity, 21, (2008), 1001--1018.

\bibitem{ChLee} D.~Chae and J.~Lee. \textit{Global well-posedness in the
super-critical dissipative quasi-geostrophic equations}.  Comm.
Math. Phys. 233 (2003), 297--311.

\bibitem{CMZ} Q.~Chen, C.~Miao and Z.~Zhang. \textit{
A new Bernstein's inequality and the 2D dissipative
quasi-geostrophic equation}. Comm. Math. Phys., 271, (2007),
821--838.

\bibitem{Const1} P.~Constantin. \textit{Active scalars and the Euler equation}.
Tatra Mountains Math. Publ.,  4 (1994), 25--38.

\bibitem{C} P.~Constantin. \textit{Energy spectrum of
quasigeostrophic turbulence}. Phys. Rev. Lett., 89, (2002),
184501.

\bibitem{CCW} P.~Constantin, D.~Cordoba and J.~Wu. \textit{On the critical dissipative
quasi-geostrophic equation}. Dedicated to Professors Ciprian
Foias and Roger Temam (Bloomington, IN, 2000). Indiana Univ. Math.
J., 50, (2001), 97--107.


\bibitem{CMT} P.~Constantin, A.~Majda and E.~Tabak. \textit{Formation
of strong fronts in the 2D quasi-geostrophic thermal active scalar}.
Nonlinearity, 7, (1994), 1495--1533.

\bibitem{CIWu2}
 P.~Constantin, G.~Iyer and J.~Wu. \textit{Global regularity for a modified critical
 dissipative quasi-geostrophic equation}. Indiana Univ. Math. J., 57, (2008), 2681--2692.

\bibitem{CW} P.~Constantin and J.~Wu. \textit{Behavior of solutions of
2D quasi-geostrophic equations}. SIAM J. Math. Anal., 30,
(1999), 937--948.

\bibitem{CW11} P.~Constantin and J.~Wu. \textit{Regularity of H\"older
continuous solutions of the supercritical quasi-geostrophic
equation}. Preprint, arXiv:math$/$0701592.

\bibitem{CW12} P.~Constantin and J.~Wu.  \textit{H\"older continuity of solutions of supercritical
dissipative hydrodynamic transport equations}. Preprint,
arXiv:math$/$0701594.

\bibitem{Cord} D.~Cordoba. \textit{Nonexistence of simple hyperbolic
blow up for the quasi-geostrophic equation}. Ann. of Math.,
148, (1998), 1135--1152.

\bibitem{CC} A.~Cordoba and D.~Cordoba. \textit{A maximum principle
applied to quasi-geostrophic equations}. Commun. Math. Phys.,
249, (2004), 511--528.

\bibitem{CCF} A.~Cordoba, D.~Cordoba and M.~Fontelos. \textit{
Formation of singularities for a transport equation with nonlocal velocity}.
Ann. of Math. (2), 162, (2005), 1377--1389.

\bibitem{Den1} S.~Denisov. \textit{Infinite superlinear growth of the gradient for the two-dimensional Euler equation}.
 Discrete Contin. Dyn. Syst., 23, (2009), 755-764.

\bibitem{dong} H.~Dong. \textit{Higher regularity for the critical and
super-critical dissipative quasi-geostrophic equations}. Preprint
arXiv:math$/$0701826.

\bibitem{DoDu} H.~Dong and D.~Du. \textit{Global well-posedness and a
decay estimate for the critical dissipative quasi-geostrophic
equation in the whole space}. Preprint arXiv:math$/$0701828.

\bibitem{DoPa} H.~Dong and N.~Pavlovic. \textit{A regularity criterion
for the dissipative quasi-geostrophic equations}. Preprint
arXiv:math$/$07105201.

\bibitem{LD}  H.~Dong, D.~Du and D.~Li. \textit{Finite time singularities and
global well-posedness for fractal Burgers equations}.  Indiana Univ. Math. J.,
 58, (2009), 807--821.

\bibitem{Held} I.~Held, R.~Pierrehumbert, S.~Garner and K.~Swanson.
\textit{Surface quasi-geostrophic dynamics}. J. Fluid Mech., 282, (1995), 1--20.

\bibitem{FPV} S.~Friedlander, N.~Pavlovic and V.~Vicol. \textit{Nonlinear instability for critically
dissipative quasi-geostrophic equation}. Preprint.


\bibitem{Ju5} N.~Ju. \textit{The maximum principle and the global attractor for the
dissipative 2D quasi-geostrophic equations}. Comm. Math. Phys.,
255, (2005), 161--181.

\bibitem{Ju6} N.~Ju. \textit{Global solutions to the two dimensional
quasi-geostrophic equation with critical or super-critical
dissipation}. Math. Ann., 334, (2006), 627--642.

\bibitem{Ju7} N.~Ju. \textit{Geometric constrains for global regularity of 2D
quasi-geostrophic flows}. J. Differential Equations, 226,
(2006), 54--79.


\bibitem{Ju2} N.~Ju. \textit{Dissipative 2D quasi-geostrophic
equation: local well-posedness, global regularity and similarity
solutions}. Indiana Univ. Math. J., 56, (2007), 187--206.

\bibitem{Feller} W.~Feller. Introduction to Probability Theory and Applications.
Vol. 2, Wiley, 1971.

\bibitem{KNS} A.~Kiselev, F.~Nazarov and R.~Shterenberg. \textit{On
blow up and regularity in dissipative Burgers equation}. Dynamics
of PDE, 5, (2008), 211--240.

\bibitem{KNV} A.~Kiselev, F.~Nazarov and A.~Volberg. \textit{Global well-posedness for
the critical $2D$ dissipative quasi-geostrophic equation}.
Inventiones Math., 167, (2007), 445--453.

\bibitem{KN3} A.~Kiselev and F.~Nazarov. \textit{A variation on a theme of Caffarelli and Vasseur}.
\rm to appear at Zapiski Nauchn. Sem. POMI.

\bibitem{K4} A.~Kiselev. \textit{Nonlocal maximum principles for active scalars},
\rm in preparation.

\bibitem{LR1}  D.~Li and J.~Rodrigo. \textit{Blow-up of solutions for a 1D transport equation
with nonlocal velocity and supercritical dissipation}.  Adv. Math., 217, (2008), 2563--2568.

\bibitem{MP} C.~Marchioro and M.~Pulvirenti. Mathematical Theory of Incompressible
Nonviscous Fluids. Springer-Verlag, New York 1994.

\bibitem{MX1} C.~Miao and L.~Xue. \textit{Global wellposedness for a modified critical dissipative
quasi-geostrophic equation}, arXiv:math$/$0901.1368 (2009).

\bibitem{Miura} H.~Miura. \textit{Dissipative quasi-geostrophic equation for large
initial data in the critical Sobolev space}. Comm. Math. Phys.,
267, (2006), 141--157.

\bibitem{M11} A.C.~Morlet. \textit{Further properties of a continuum of model equations ith globally
defined flux}. J. Math. Anal. Appl., 22, (1998), 132--160.


\bibitem{Nad} N.~S.~Nadirashvili. \textit{Wandering solutions of the two-dimensional Euler equation}.
(Russian) Funkcional. Anal. i Prilozh., 25, (1991), 70--71; translation in Funct. Anal. Appl. 25, (1991), 220--221
(1992).

\bibitem{R} S.~Resnick. \textit{Dynamical problems in nonlinear
advective partial differential equations}. Ph.D. Thesis,
University of Chicago, 1995.

\bibitem{LSJS} L.~Smith and J.~Sukhatme. \textit{Eddies and waves in a
family of dispersive dynamically active scalars}. \rm Preprint
arXiv:0709.2897.

\bibitem{Sylv} L.~Sylvestre. \textit{Eventual regularization for the slightly supercritical quasi-geostrophic equation}.
Preprint arXiv:math$/$0812.4901.

\bibitem{T3} M.~Taylor.  Partial Differential Equations III:
Nonlinear Equations. Springer-Verlag, New York, 1997.


\bibitem{Wu} J.~Wu. \textit{The quasi-geostrophic equation
and its two regularizations}. Comm. Partial Differential
Equations, 27, (2002), 1161--1181.

\bibitem{Wu1} J.~Wu. \textit{
Existence and uniqueness results for the 2-D dissipative
quasi-geostrophic equation}. Nonlinear Anal., 67, (2007),
3013--3036.

\bibitem{Wu2} J.~Wu. \textit{Solutions of the 2D quasi-geostrophic equation in
H\"older spaces}. Nonlinear Anal., 62, (2005), 579--594.

\bibitem{Wu3} J.~Wu. \textit{
The two-dimensional quasi-geostrophic equation with critical or
supercritical dissipation}. Nonlinearity, 18, (2005),
139--154.

\bibitem{Y} V.I.~Yudovich. \textit{ The loss of smoothness of the solutions of Euler equations with time}.
(Russian) Dinamika Sploshn. Sredy Vyp. 16, Nestacionarnye Problemy Gidrodinamiki 121 (1974), 71--78.

\bibitem{Y1} V.I.~Yudovich. \textit{On the loss of smothness of the solutions of the Euler
equations and the inherent instability of flows of an ideal fluid}. Chaos, 10, (2000),
705--719.

\end{thebibliography}
\end{document}